\title[The Segal conjecture for THH of complex cobordism]
{The Segal conjecture for topological \\ Hochschild
	homology of complex cobordism}
\author{Sverre Lun{\o}e--Nielsen}
\address{Department of Mathematics, University of Oslo, Norway}
\email{sverreln@math.uio.no} \urladdr{http://folk.uio.no/sverreln}
\author{John Rognes}
\address{Department of Mathematics, University of Oslo, Norway}
\email{rognes@math.uio.no} \urladdr{http://folk.uio.no/rognes}
\date{April 19th 2011}
\newtheorem{thm}{Theorem}[section]
\newtheorem{lemma}[thm]{Lemma}
\newtheorem{prop}[thm]{Proposition}
\newtheorem{cor}[thm]{Corollary}
\theoremstyle{definition}
\newtheorem{dfn}[thm]{Definition}
\theoremstyle{remark}
\newtheorem{remark}[thm]{Remark}
\numberwithin{equation}{section}
\DeclareMathOperator{\cok}{cok}
\DeclareMathOperator*{\colim}{colim}
\DeclareMathOperator{\Ext}{Ext}
\DeclareMathOperator*{\hocolim}{hocolim}
\DeclareMathOperator*{\holim}{holim}
\DeclareMathOperator{\Hom}{Hom}
\DeclareMathOperator{\id}{id}
\DeclareMathOperator{\im}{im}
\DeclareMathOperator{\Map}{Map}
\DeclareMathOperator{\pre}{pre}
\DeclareMathOperator{\sd}{sd}
\DeclareMathOperator{\SP}{P}
\DeclareMathOperator{\thh}{thh}
\DeclareMathOperator{\THH}{THH}
\newcommand{\A}{\mathscr{A}}
\newcommand{\bfn}{\mathbf{n}}
\newcommand{\C}{\mathbb{C}}
\newcommand{\ctensor}{\mathbin{\widehat{\otimes}}}
\newcommand{\F}{\mathbb{F}}
\newcommand{\hatE}{\widehat{E}}
\newcommand{\into}{\rightarrowtail}
\newcommand{\longto}{\longrightarrow}
\newcommand{\map}{F}
\newcommand{\onto}{\twoheadrightarrow}
\newcommand{\tET}{\widetilde{E\T}}
\newcommand{\tE}{\widetilde{E}}
\newcommand{\tH}{\widehat{H}}
\newcommand{\T}{\mathbb{T}}
\newcommand{\Z}{\mathbb{Z}}
\renewcommand{\:}{\colon}
\begin{document}

\begin{abstract}
We study the $C_p$-equivariant Tate construction on the topological
Hochschild homology $\THH(B)$ of a symmetric ring spectrum $B$ by relating
it to a topological version $R_+(B)$ of the Singer construction, extended
by a natural circle action.  This enables us to prove that the
fixed and homotopy fixed point spectra of $\THH(B)$ are $p$-adically
equivalent for $B = MU$ and~$BP$.  This generalizes the classical
$C_p$-equivariant Segal conjecture, which corresponds to the case
$B = S$.
\end{abstract}

\maketitle{}
\setcounter{tocdepth}{1}
\tableofcontents{}

\section{Introduction}

Let $p$ be a prime, let $\T$ be the circle group, and let $C_p \subset
\T$ be the cyclic subgroup of order~$p$.  Let~$B$ be any symmetric ring
spectrum, in the sense of \cite{HSS00}.  Its topological Hochschild
homology $\THH(B)$, as constructed in \cite{HM97}, is then a genuinely
$\T$-equivariant spectrum in the sense of \cite{LMS86}.  Let $E\T$ be
a free, contractible $\T$-CW complex and let $c \: E\T_+ \to S^0$ be
the usual collapse map.  Forming the induced map $\THH(B) = \map(S^0,
\THH(B)) \to \map(E\T_+, \THH(B))$ and passing to $C_p$-fixed points,
we obtain the canonical map $\Gamma \: \THH(B)^{C_p} \to \THH(B)^{hC_p}$
comparing fixed and homotopy fixed points.

When $B = S$ is the sphere spectrum, $\THH(S)$ and the
$\T$-equivariant sphere spectrum are $C_p$-equivariantly equivalent,
so the Segal conjecture for $C_p$ is the assertion that $\Gamma \:
\THH(S)^{C_p} \to \THH(S)^{hC_p}$ is a $p$-adic equivalence, which was
proven in \cite{LDMA80} and~\cite{AGM85}.  Our main theorem generalizes
this result to the cases when $B = MU$ is the complex cobordism spectrum
or $B = BP$ is the $p$-local Brown--Peterson spectrum.

\begin{thm} \label{thm:segalthhmubp}
The maps
$$
\Gamma \: \THH(MU)^{C_p} \longto \THH(MU)^{hC_p}
$$
and
$$
\Gamma \: \THH(BP)^{C_p} \longto \THH(BP)^{hC_p}
$$
are $p$-adic equivalences.
\end{thm}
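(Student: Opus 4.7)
The strategy is to reduce, via the norm cofiber sequence, to a statement about the Tate construction $\THH(B)^{tC_p}$, and then apply the main structural result of the paper---the identification of the $C_p$-Tate construction with the topological Singer construction $R_+(B)$---to perform explicit mod $p$ cohomology calculations for $B = MU$ and $BP$.

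\emph{Step 1 (Reduction to the Tate map).} I would compare the cofiber sequence
\[
\THH(B)_{hC_p} \longto \THH(B)^{C_p} \longto \Phi^{C_p}\THH(B),
\]
in which the right-hand term is the geometric $C_p$-fixed points, with the Tate cofiber sequence
\[
\THH(B)_{hC_p} \longto \THH(B)^{hC_p} \longto \THH(B)^{tC_p}
\]
through the natural map $\Gamma$. The left-hand terms agree, so by the five lemma $\Gamma$ is a $p$-adic equivalence if and only if the induced map $\Phi^{C_p}\THH(B) \to \THH(B)^{tC_p}$ is. For commutative ring spectra such as $MU$ and $BP$, the cyclotomic structure on $\THH(B)$ provides a natural comparison between $\Phi^{C_p}\THH(B)$ and $\THH(B)$, so the task becomes showing that the cyclotomic Frobenius $\varphi \: \THH(B) \to \THH(B)^{tC_p}$ is a $p$-adic equivalence.

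\emph{Step 2 (Singer construction).} I would then invoke the main technical result of the paper: the identification of $H^*(\THH(B)^{tC_p}; \F_p)$ with $R_+\bigl(H^*(\THH(B); \F_p)\bigr)$, carrying the residual circle action, and the corresponding identification of $\varphi^*$ with the canonical Singer projection. This translates the topological problem into an algebraic question about the Singer construction applied to the $\A$-module $H^*(\THH(B); \F_p)$.

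\emph{Step 3 (Computation for $MU$ and $BP$).} For $B = MU$, B{\"o}kstedt's computation gives $H^*(\THH(MU); \F_p)$ as an explicit $\A$-module, and a suitable generalization of the Adams--Gunawardena--Miller theorem on the algebraic Singer construction (the cohomological incarnation of the classical Segal conjecture for $C_p$) implies, via the conditionally convergent Adams spectral sequence, that $\varphi$ is a $p$-adic equivalence. For $B = BP$ one either performs the analogous direct computation, or one deduces the result from the $MU$ case by using Quillen's $p$-local splitting $MU_{(p)} \simeq \bigvee \Sigma^{2k} BP$ together with the compatibility of $\THH$, $(-)^{C_p}$ and $(-)^{hC_p}$ with such wedge decompositions.

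The principal obstacle will be Step~2: making the Singer construction identification functorial, genuinely $C_p$-equivariant and compatible with the circle action, and tracking $\varphi^*$ through it. A subsidiary technical difficulty is the convergence of the Adams spectral sequence for the non-connective spectrum $\THH(B)^{tC_p}$; I would expect this to be handled by pro-system arguments in the spirit of the classical proof of the Segal conjecture, by filtering along the skeleta of $E\T$ and taking appropriate limits.
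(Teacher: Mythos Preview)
Your Step~1 reduction is correct and matches the paper's Section~2: via the homotopy cartesian square~\eqref{eq:rgammathhb} and the cyclotomic equivalence $[\tET \wedge \THH(B)]^{C_p} \simeq \THH(B)$, the claim reduces to showing that $\gamma \: \THH(B) \to \THH(B)^{tC_p}$ is a $p$-adic equivalence.

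The gap is in Step~2. You write that you would ``invoke'' the identification of $H^*_c(\THH(B)^{tC_p})$ with $R_+(H^*(\THH(B)))$ as ``the main technical result of the paper,'' and then in Step~3 appeal to the Ext-equivalence property of the Singer map $\epsilon$. But that identification (Theorem~\ref{thm:cohomiso}) is not a general structural input---it \emph{is} the theorem you are trying to prove, and it is false for general $B$ (e.g.\ $B = H\F_p$, $H\Z$, $\ell$). The companion paper's result identifies $(B^{\wedge p})^{tC_p}$ with the topological Singer construction $R_+(B)$, where $C_p$ acts by permuting the smash factors; but the $C_p$-action on $\THH(B)$ restricted from the circle is \emph{not} of this form, so there is no formal way to conclude that $\THH(B)^{tC_p}$ realizes $R_+(H^*(\THH(B)))$. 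Your diagnosis of the ``principal obstacle'' as functoriality and circle-compatibility therefore misses the point: the obstacle is that the isomorphism $\Phi_B$ must be constructed by hand, and its existence depends on computations specific to $MU$ and $BP$.

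What the paper actually does is: (i) relate the two Tate constructions via the $C_p$-equivariant map $\eta_p \: B^{\wedge p} \to \sd_p\THH(B)$ and its circle-equivariant extension $\omega_p$, leading to the commutative square~\eqref{eq:omegasquare}; (ii) compute the homological Tate spectral sequence for $\THH(B)^{tC_p}$ and show it collapses at $\hatE^3$ (Propositions~\ref{prop:thhmutatess}, \ref{prop:thhbptatess}); (iii) compute $\gamma_*$ on the algebra generators $\sigma m_\ell$ (resp.\ $\sigma\bar\xi_k$) explicitly, using the formulas for $\omega^t_*$ and $\epsilon_*$ (Theorems~\ref{thm:gammamu}, \ref{thm:gammabp}); (iv) assemble $\Phi_B = \widehat g \circ \widehat f^{-1}$ from pro-isomorphisms of filtered quotients, via delicate Tate-filtration shift estimates (Propositions~\ref{prop:fgmu}, \ref{prop:fgbp}). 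None of this is captured by your outline. Finally, your proposed shortcut for $BP$ via the Quillen splitting $MU_{(p)} \simeq \bigvee \Sigma^{2k} BP$ does not work as stated: $\THH$ depends on the ring structure and does not take wedge decompositions of underlying spectra to wedge decompositions.
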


The corresponding assertion is not true as stated for general symmetric
ring spectra~$B$, but there are special cases for which it is approximately
true.  For example, when $B = H\F_p$, $H\Z$ (the Eilenberg--Mac\,Lane
spectra) or $\ell$ (the Adams summand of connective topological
$K$-theory $ku$) the map $\Gamma$ induces an isomorphism of homotopy groups
in sufficiently high degrees, with suitable coefficients, as proved in
\cite{HM97}, \cite{BM94} and \cite{AR02}, respectively.  In particular,
in those cases the homotopy fixed point spectrum is not connective,
even if the fixed point spectrum is.  We believe that the explanation
for this phenomenon is related to the conjectured ``red-shift'' property
of topological cyclic homology and algebraic $K$-theory \cite{AR08}.
In the examples mentioned above the difference between the fixed and
homotopy fixed points stems from the first chromatic truncation present
in $B$ (a Milnor $Q^i$ that acts trivially on $H^*(B; \F_p)$, followed
by a $Q^{i+1}$ that acts nontrivially), while $S$, $MU$ and~$BP$ are not
truncated at any finite chromatic complexity.  The $p$-adic equivalences
discussed above are thus rather special properties of the symmetric
ring spectra $S$, $MU$ and $BP$.  This is reflected in our proof of
Theorem~\ref{thm:segalthhmubp}, which depends on calculational facts
particular to these cases.

Let $C_{p^n} \subset \T$ be the cyclic subgroup of order~$p^n$.
The original Segal conjecture for $C_{p^n}$ is known to follow from
the one for $C_p$, without further explicit computations \cite{Ra84},
much as the Segal conjecture for general $p$-groups follows
from the elementary abelian case \cite{Ca84}.
Similarly, combining \cite{BBLR}*{1.8} with Theorem~\ref{thm:segalthhmubp}
implies the following.

\begin{cor}
The maps
$$
\Gamma_n \: \THH(MU)^{C_{p^n}} \to \THH(MU)^{hC_{p^n}}
$$
and
$$
\Gamma_n \: \THH(BP)^{C_{p^n}} \to \THH(BP)^{hC_{p^n}}
$$
are $p$-adic equivalences, for all $n\ge1$.
\end{cor}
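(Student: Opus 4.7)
The plan is to deduce the corollary from Theorem~\ref{thm:segalthhmubp} by invoking \cite{BBLR}*{1.8}, which supplies an inductive reduction from the $C_p$-case to the $C_{p^n}$-case for cyclotomic spectra. I would proceed by induction on $n$, with base case $n=1$ being precisely Theorem~\ref{thm:segalthhmubp}.

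The key observation is that both $\THH(MU)$ and $\THH(BP)$ are cyclotomic spectra in the sense of Hesselholt--Madsen: each carries a natural equivalence $\Phi^{C_p}\THH(B) \simeq \THH(B)$ of $\T$-spectra, as part of the structure intrinsic to $\THH$. Hence \cite{BBLR}*{1.8} applies, giving the general statement that whenever $X$ is a cyclotomic spectrum for which $\Gamma \: X^{C_p} \to X^{hC_p}$ is a $p$-adic equivalence, the same holds for $\Gamma_n \: X^{C_{p^n}} \to X^{hC_{p^n}}$ at every level $n\ge1$. The argument there compares the isotropy-separation tower built from the cofibre sequences $X_{hC_{p^n}} \to X^{C_{p^n}} \to \Phi^{C_{p^n}} X$ with the Tate tower built from $X_{hC_{p^n}} \to X^{hC_{p^n}} \to X^{tC_{p^n}}$; the cyclotomic identification rewrites the geometric-fixed-point layer in terms of a lower-level fixed-point spectrum, and a five-lemma argument propagates the $C_p$-equivalence through the induction. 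Specialising to $X = \THH(MU)$ and $X = \THH(BP)$, and feeding in Theorem~\ref{thm:segalthhmubp} to check the hypothesis, yields both claims of the corollary.

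The main obstacle of the entire programme is already absorbed into Theorem~\ref{thm:segalthhmubp}; once that statement is available, the passage from $C_p$ to $C_{p^n}$ is formal, driven by the cyclotomic structure alone, and requires no further input particular to complex cobordism or Brown--Peterson. This is entirely parallel to the classical reduction from the $C_p$ Segal conjecture to its $C_{p^n}$ version recorded in \cite{Ra84}, with \cite{BBLR}*{1.8} playing the role of the cyclotomic analogue of that reduction.
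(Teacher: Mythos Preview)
Your proposal is correct and matches the paper's own argument exactly: the paper states the corollary as an immediate consequence of combining \cite{BBLR}*{1.8} with Theorem~\ref{thm:segalthhmubp}, and gives no further details. Your elaboration on how the cyclotomic structure drives the induction is accurate but goes beyond what the paper itself records.
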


\subsection{Organization}
In Section~2 we outline how to deduce the main theorem from the
existence of a suitable bicontinuous isomorphism $\Phi_B$, for $B =
MU$ or $BP$.  This isomorphism is constructed in Section~7, but this
requires computational control of a canonical map $\gamma \: \THH(B)
\to \THH(B)^{tC_p}$.  It suffices to gain this control on the image
of a map $\omega \: \T \ltimes B \to \THH(B)$.  For this we use a
diagram~\eqref{eq:omegasquare}, which is constructed in Section~5.
Thus $\gamma \circ \omega$ factors as the composite of a map $\rho \wedge
\epsilon_B \: \T \ltimes B \to \T/C_p \ltimes R_+(B)$, which was analyzed
in our paper~\cite{LR:A}, and a map $\omega^t \: \T/C_p \ltimes R_+(B)
\to \THH(B)^{tC_p}$, which is constructed and calculated in
Sections~3 and~4.  The calculations that are special to $B = MU$ and $BP$
are completed in Section~6.

\subsection{History}
This work started out as a part of the first author's PhD thesis at the
University of Oslo, supervised by the second author.  The original
thesis covered the case of $BP$ at $p=2$, under the assumption that
$BP$ is an $E_\infty$ ring spectrum.  The current version does not make
that assumption, and covers both of the cases $MU$ and $BP$, at all primes.

\section{Strategy of proof}

In this section we outline how to deduce the Main
Theorem~\ref{thm:segalthhmubp} from the cohomological
Theorem~\ref{thm:cohomiso}, or equivalently, from the homological
Theorem~\ref{thm:homiso}.

Let $\tET$ be the mapping cone of the collapse map $c \: E\T_+ \to S^0$.
There is a homotopy cartesian square of $\T$-equivariant spectra
$$
\xymatrix{
\THH(B) \ar[r] \ar[d] & \tET \wedge \THH(B) \ar[d] \\
\map(E\T_+, \THH(B)) \ar[r]
  & \tET \wedge \map(E\T_+, \THH(B)) \,,
}
$$
see \cite{GM95}.  Passing to $C_p$-fixed points, we get a homotopy
cartesian square of $\T/C_p$-equivariant spectra
\begin{equation} \label{eq:rgammathhb}
\xymatrix{
\THH(B)^{C_p} \ar[r]^-R \ar[d]_{\Gamma}
	& [\tET \wedge \THH(B)]^{C_p} \ar[d]^{\hat\Gamma} \\
\THH(B)^{hC_p} \ar[r]^-{R^h} & \THH(B)^{tC_p} \,.
}
\end{equation}
Here $\THH(B)^{tC_p}$ denotes the \emph{$C_p$-Tate construction} on $\THH(B)$.
In order to prove Theorem~\ref{thm:segalthhmubp} we use the homotopy
cartesian square~\eqref{eq:rgammathhb} to translate the problem into
a question about the map $\hat\Gamma \: [\tET \wedge \THH(B)]^{C_p}
\to \THH(B)^{tC_p}$.

Suppose hereafter that $B$ is connective, meaning that $\pi_*(B)
= 0$ for $*<0$.  By the cyclotomic structure of $\THH(B)$, see
\cite{HM97}, there is a natural equivalence $[\tET \wedge \THH(B)]^{C_p}
\overset{\simeq}\longto \THH(B)$, and we are faced with the problem of
showing that the composite stable map
$$
\gamma \: \THH(B) \longto \THH(B)^{tC_p}
$$
is a $p$-adic equivalence for $B = MU$ and $BP$.
We do this using the methods of \cite{CMP87} and \cite{LR:A}, 
by realizing the Tate construction $\THH(B)^{tC_p}$ as the
homotopy limit
$$
\THH(B)^{tC_p} \simeq \holim_n \THH(B)^{tC_p}[n]
$$
of a \emph{Tate tower} $\{ \THH(B)^{tC_p}[n] \}_n$
(see~\eqref{eq:tatetower-n}) of bounded below spectra of finite type
mod~$p$, and comparing the Adams spectral sequence
\begin{equation} \label{eq:adamsss}
E_2^{s,t} = \Ext_{\A}^{s,t}(H^*(\THH(B)), \F_p)
\Longrightarrow \pi_{t-s} \THH(B)\sphat_p
\end{equation}
to the inverse limit of Adams spectral sequences
\begin{equation} \label{eq:limitadamsss}
E_2^{s,t} = \Ext_{\A}^{s,t}(H_c^*(\THH(B)^{tC_p}), \F_p)
\Longrightarrow \pi_{t-s} (\THH(B)^{tC_p})\sphat_p
\end{equation}
associated to that tower.
We always use $\F_p$-coefficients for homology and cohomology,
$\A$ denotes the Steenrod algebra, and
$$
H_c^*(\THH(B)^{tC_p}) = \colim_n H^*(\THH(B)^{tC_p}[n])
$$
is the \emph{continuous cohomology} of the Tate tower.
We note, as in \cite{LR:A}*{2.3}, that this colimit depends on the actual
tower, not just on its homotopy limit.
The key computational input is now that, in the cases $B = S$, $MU$
and~$BP$, the induced $\A$-module homomorphism
$$
\gamma^* \: H_c^*(\THH(B)^{tC_p}) \longto H^*(\THH(B))
$$
is an $\Ext$-equivalence, in the sense that it induces an isomorphism
between the $E_2$-terms of the spectral sequences~\eqref{eq:adamsss}
and~\eqref{eq:limitadamsss}.  Both spectral sequences are strongly
convergent for connective $B$ of finite type mod~$p$, so this implies that
$\gamma$, $\hat\Gamma$ and $\Gamma$ are $p$-adic equivalences.
More precisely, we can prove the following theorem.  Recall from
\cite{AGM85}, \cite{LR:A} that the \emph{Singer construction} on an
$\A$-module $M$ is an $\A$-module $R_+(M)$, which comes equipped with
a natural $\Ext$-equivalence $\epsilon \: R_+(M) \to M$.

\begin{thm} \label{thm:cohomiso}
When $B = S$, $MU$ or~$BP$ there is an $\A$-module isomorphism
$$
\Phi_B^* \: H^*_c(\THH(B)^{tC_p})
	\overset{\cong}\longto R_+(H^*(\THH(B)))
$$
such that $\gamma^* = \epsilon \circ \Phi_B^*$.
\end{thm}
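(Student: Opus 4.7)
My plan is to construct $\Phi_B^*$ by taking continuous cohomology along the map $\omega^t \: \T/C_p \ltimes R_+(B) \to \THH(B)^{tC_p}$ produced in Sections~3 and~4, and then to verify both that the resulting map is an $\A$-module isomorphism and that the relation $\gamma^* = \epsilon \circ \Phi_B^*$ holds. The second assertion should follow from the factorization $\gamma \circ \omega \simeq \omega^t \circ (\rho \wedge \epsilon_B)$ supplied by the diagram~\eqref{eq:omegasquare} of Section~5: passing to cohomology, the right-hand side factors through $\epsilon$ on the $R_+$-factor, so once $\Phi_B^*$ is defined as the cohomology of $\omega^t$ (suitably interpreted under a natural identification $H^*(\T/C_p \ltimes R_+(B)) \cong R_+(H^*(\THH(B)))$), the compatibility falls out automatically.

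First I would make the two $\A$-modules of Theorem~\ref{thm:cohomiso} as explicit as possible. On the source side, $H^*_c(\THH(B)^{tC_p})$ is the abutment of a Tate-type spectral sequence
$$
\tH^{*}(C_p;\, H^*(\THH(B))) \Longrightarrow H^*_c(\THH(B)^{tC_p}),
$$
whose $E_2$-term is a completed $H^*(BC_p)$-extension of $H^*(\THH(B))$, with $\A$-module structure governed by the Steenrod operations on $H^*(\THH(B))$, the Kudo transgression, and a Singer-type twisting. On the target side, the topological Singer construction $R_+(B)$ of \cite{LR:A}, extended by the natural circle action referred to in the abstract, has a description of the same algebraic form by construction. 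This formal parallelism is what makes $\Phi_B^*$ a plausible candidate for an isomorphism.

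Second, the case $B = S$ of the theorem reduces to the classical identification $H^*_c(S^{tC_p}) \cong R_+(\F_p)$ from \cite{AGM85}, combined with $\THH(S) \simeq S$. For $B = MU$ and $BP$ I would invoke the B{\"o}kstedt-style computation of $H^*(\THH(B))$ as a tensor product of $H^*(B)$ with an exterior algebra on the circle-suspensions of the polynomial generators of $H^*(B)$. The Tate spectral sequence should then be sufficiently rigid (with differentials forced by the circle action and the Steenrod operations) that the resulting $\A$-module matches $R_+(H^*(\THH(B)))$ generator by generator; this is the calculational content placed in Section~6.

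The hard part will be the explicit analysis of the Tate spectral sequence for $B = MU$ and $BP$, including the resolution of any extension problems and the verification that $\omega^t$ detects the Singer generators on the nose. This is precisely the point at which the absence of a finite chromatic truncation in $MU$ and $BP$ enters the proof: a truncated Milnor operation $Q^i$, as in $H\F_p$, $H\Z$ or $\ell$, would introduce an extra differential that would prevent $\Phi_B^*$ from being surjective onto $R_+(H^*(\THH(B)))$. Engineering the comparison so that the geometric generators coming from $\omega^t$ match the algebraic generators of $R_+$, and ruling out unexpected differentials, is the main obstacle.
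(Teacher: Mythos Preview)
Your plan has a genuine gap at its very first step: the proposed identification
\[
H^*(\T/C_p \ltimes R_+(B)) \;\cong\; R_+\bigl(H^*(\THH(B))\bigr)
\]
is false, and consequently $\omega^t$ cannot itself induce $\Phi_B^*$.  The left-hand side is (dual to) $H_*(\T/C_p)\otimes R_+(H_*(B))$, so the circle factor contributes only the two classes $e_0, e_1$.  The right-hand side, for $B = MU$ or $BP$, is the Singer construction on $H_*(B)\otimes E(\sigma m_\ell \mid \ell\ge1)$, where the exterior factor is infinite-dimensional.  The map $\omega^t$ is used in the paper only to locate the images of the \emph{algebra generators} $m_\ell^p$ and $m_\ell^{p-1}\sigma m_\ell$ inside $H^c_*(\THH(B)^{tC_p})$; it is nowhere close to an equivalence, and taking its cohomology does not produce $\Phi_B^*$.

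What the paper actually does is to work homologically and exploit the ring structure.  It introduces the intermediate complete $\A_*$-comodule algebra
\[
R_+(H_*(B)) \otimes_{H_*(B)} H_*(\THH(B)) \;\cong\; R_+(H_*(B)) \otimes E(\sigma m_\ell \mid \ell\ge1)
\]
and defines two algebra maps out of it,
\[
f(\alpha\otimes\beta) = R_+(\eta_*)(\alpha)\cdot\epsilon_*(\beta),\qquad
g(\alpha\otimes\beta) = \eta^t_*(\alpha)\cdot\gamma_*(\beta),
\]
landing in $R_+(H_*(\THH(B)))$ and $H^c_*(\THH(B)^{tC_p})$ respectively.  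The map $\omega^t_*$ (together with the formulas of Theorems~\ref{thm:omegatformulas} and~\ref{thm:gammamu}) is used only to compute $g$ on the exterior generators $\sigma m_\ell$.  The isomorphism is then $\Phi_B = \widehat g\circ\widehat f^{-1}$, obtained after passing to inverse limits; $\Phi_B^*$ is its continuous $\F_p$-linear dual.  The compatibility $\gamma_* = \Phi_B\circ\epsilon_*$ is immediate from the diagram~\eqref{eq:pyramid}, and dualizes to $\gamma^* = \epsilon\circ\Phi_B^*$.

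A second, more subtle, point you have not addressed is why $\widehat f$ and $\widehat g$ are \emph{bicontinuous} isomorphisms, not merely bijections of associated gradeds.  Both $f$ and $g$ shift Tate filtration (since $\epsilon_*(\sigma m_\ell)$ and $\gamma_*(\sigma m_\ell)$ land in negative filtration, and by unbounded amounts as $\ell\to\infty$), so the induced maps on the towers $\{F^n\}_n$ are not isomorphisms term by term.  The paper proves (Propositions~\ref{prop:fgmu} and~\ref{prop:fgbp}) that they are pro-isomorphisms in each total degree, via an explicit filtration-shift estimate $N(n,d)=p(n-d)+d$ and a hand-built pro-inverse.  This is the technical heart of the argument, and without it a match of $\hatE^\infty$-terms would not yield a continuous $\A_*$-comodule isomorphism of the abutments.
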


As summarized above, this implies Theorem~\ref{thm:segalthhmubp}.
When $B = S$, the $0$-simplex inclusion $\eta_p \: S^{\wedge p} \to
\sd_p \THH(S) \cong \THH(S)$ (see Definition~\ref{dfn:sdpetapomegap})
is a $C_p$-equivariant equivalence, so the theorem is already covered
by~\cite{LR:A}*{5.13}, or the original proofs of the Segal conjecture.
On the other hand, the cases $B = MU$ and~$BP$ involve new ideas.
For one thing, we will make use of the ring spectrum structure on
$\THH(B)^{tC_p}$, which means that it is more convenient to work with
the \emph{continuous homology}
$$
H^c_*(\THH(B)^{tC_p}) = \lim_n H_*(\THH(B)^{tC_p}[n])
$$
of the Tate tower, and its induced algebra structure,
than with the continuous cohomology.  The continuous homology must be
viewed as a topological graded $\F_p$-vector space, with the linear topology
generated by the neighborhood basis of the origin given by the kernels
$$
F_n H^c_*(\THH(B)^{tC_p})
= \ker \bigl( H^c_*(\THH(B)^{tC_p}) \to H_*(\THH(B)^{tC_p}[n]) \bigr) \,.
$$
We may also view $H^c_*(\THH(B)^{tC_p})$ as the limit of
its quotients
$$
F^n H^c_*(\THH(B)^{tC_p})
= \im \bigl( H^c_*(\THH(B)^{tC_p}) \to H_*(\THH(B)^{tC_p}[n]) \bigr) \,.
$$

Let $\A_*$ denote the dual of the Steenrod algebra.  The $\A_*$-comodule
structure on $H_*(\THH(B)^{tC_p}[n])$ for each integer $n$ makes the
limit $H^c_*(\THH(B)^{tC_p})$ a \emph{complete $\A_*$-comodule} in
the sense of \cite{LR:A}*{2.7}.  Similarly, for each $\A_*$-comodule
$M_*$ that is bounded below and of finite type, the \emph{homological
Singer construction} $R_+(M_*)$ from \cite{LR:A}*{3.7} is a complete
$\A_*$-comodule, equipped with a natural continuous homomorphism
$\epsilon_* \: M_* \to R_+(M_*)$ of complete $\A_*$-comodules.  The linear
topologies on $H^c_*(\THH(B)^{tC_p})$ and $R_+(M_*)$ allow us to recover
the continuous cohomology and the (cohomological) Singer construction,
respectively, as their continuous $\F_p$-linear duals
\begin{align*}
H_c^*(\THH(B)^{tC_p}) &\cong \Hom^c(H^c_*(\THH(B)^{tC_p}), \F_p) \\
R_+(M) &\cong \Hom^c(R_+(M_*), \F_p)
\end{align*}
where $M = H^*(\THH(B))$ and $M_* = H_*(\THH(B))$, see
\cite{LR:A}*{2.6, 2.9}.
The corresponding assertions for the full linear duals would be false,
since neither $H_c^*(\THH(B)^{tC_p})$ nor $R_+(M)$ will be of finite type.
The $\A$-module homomorphism $\gamma^*$ is the continuous dual of the
complete $\A_*$-comodule homomorphism
$$
\gamma_* \: H_*(\THH(B)) \longto H^c_*(\THH(B)^{tC_p}) \,,
$$
where $H_*(\THH(B))$ has the discrete topology, and
similarly $\epsilon$ is the continuous dual of $\epsilon_*$.
Theorem~\ref{thm:cohomiso} now follows immediately from its homological
analogue, stated below, by letting $\Phi_B^*$ be the continuous dual
of $\Phi_B$.

\begin{thm} \label{thm:homiso}
When $B = S$, $MU$ or~$BP$ there is a continuous isomorphism of complete
$\A_*$-comodules
$$
\Phi_B \: R_+(H_*(\THH(B))) \overset{\cong}\longto H^c_*(\THH(B)^{tC_p})
\,,
$$
with continuous inverse, such that $\gamma_* = \Phi_B \circ \epsilon_*$.
\end{thm}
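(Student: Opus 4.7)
The plan is to construct $\Phi_B$ multiplicatively from the map $\omega^t \: \T/C_p \ltimes R_+(B) \to \THH(B)^{tC_p}$ provided by Sections~3--4, verify the identity $\gamma_* = \Phi_B \circ \epsilon_*$ via the commutative square~\eqref{eq:omegasquare} of Section~5, and finally prove bijectivity by appealing to the calculations of Section~6 that are specific to $MU$ and $BP$.

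First I would apply continuous homology to $\omega^t$ to obtain an $\A_*$-comodule map landing in $H^c_*(\THH(B)^{tC_p})$. B\"okstedt's calculation presents $H_*(\THH(B))$ as a free graded-commutative $H_*(B)$-algebra on suspension classes $\sigma x$, one for each element $x$ of a basis of $H_*(B)$, and each $\sigma x$ is, up to sign, the image under $\omega_*$ of the fundamental class of $\T_+$ smashed with $x$. Combining this with the ring spectrum structure on $\THH(B)^{tC_p}$ and the natural algebra structure on the homological Singer construction from \cite{LR:A}, one extracts a continuous algebra map
$$
\Phi_B \: R_+(H_*(\THH(B))) \longto H^c_*(\THH(B)^{tC_p}) \,,
$$
which will be the candidate isomorphism.

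Second, the relation $\gamma_* = \Phi_B \circ \epsilon_*$ should follow by a diagram chase from the factorization $\gamma \circ \omega \simeq \omega^t \circ (\rho \wedge \epsilon_B)$ established in \cite{LR:A}, together with diagram~\eqref{eq:omegasquare}: on homology, $\epsilon_*$ is the structural unit into the Singer construction, $(\rho \wedge \epsilon_B)_*$ hits the multiplicative generators of $R_+(H_*(\THH(B)))$, and $(\omega^t)_*$ is by construction the restriction of $\Phi_B$ to these generators, so the two composites agree on generators and hence on all of $R_+(H_*(\THH(B)))$ after multiplicative extension.

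The main obstacle is showing that $\Phi_B$ is a bijection with continuous inverse. For $B = S$ this is \cite{LR:A}*{5.13}, so I would focus on $B = MU$ and $BP$. I would compute the target by the Tate spectral sequence
$$
\hat E^2_{s,t} = \hat H^{-s}(C_p; H_t(\THH(B))) \Longrightarrow H^c_{s+t}(\THH(B)^{tC_p})
$$
and match its $E^\infty$-page, filtrationwise, against the natural filtration on $R_+(H_*(\THH(B)))$ from \cite{LR:A}. Since $\Phi_B$ respects filtrations by construction, it suffices to produce an isomorphism on associated gradeds, after which continuity of the inverse follows from strict preservation of the neighborhood bases of the two topologies. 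The serious calculational work lies in identifying the Tate differentials with the formal differentials built into the Singer construction, and this is precisely where the hypothesis on $B$ enters: since neither $H_*(MU)$ nor $H_*(BP)$ exhibits any chromatic truncation, the Milnor primitives $Q^i$ act on them without ``gaps'', the Tate spectral sequence for $\THH(B)$ unfolds exactly as it does on the polynomial generators, and the identification with the Singer construction closes off. This last step is the computational heart of Section~6.
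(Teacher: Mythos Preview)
Your proposal has a genuine gap in the third step, and the first step is not clearly enough specified to see whether it would even produce a map with the right source.

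On the construction: $\omega^t$ and the commutative square~\eqref{eq:omegasquare} give you maps \emph{into} $H^c_*(\THH(B)^{tC_p})$ from $H_*(\T/C_p) \otimes R_+(H_*(B))$, not a map out of $R_+(H_*(\THH(B)))$. To get $\Phi_B$ you would still need to identify $R_+(H_*(\THH(B)))$ with something built from $R_+(H_*(B))$ and the exterior generators $\sigma m_\ell$, and this identification is itself half of the problem. The paper handles this by introducing the intermediate algebra $R_+(H_*(B)) \otimes_{H_*(B)} H_*(\THH(B))$ and defining two maps $f$ and $g$ out of it, one to each side; then $\Phi_B = \widehat g \circ \widehat f^{-1}$.

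The serious error is the claim that ``$\Phi_B$ respects filtrations by construction, so it suffices to produce an isomorphism on associated gradeds.'' This is false, and no filtration-preserving isomorphism can exist. For $B = MU$ the two $\hatE^\infty$-terms are
\[
E(u)\otimes P(t,t^{-1})\otimes P(m_\ell^{\otimes p})\otimes E(\sigma m_\ell{}^{\otimes p})
\quad\text{and}\quad
E(u)\otimes P(t,t^{-1})\otimes P(m_\ell^p)\otimes E(m_\ell^{p-1}\sigma m_\ell),
\]
with the exterior generators in bidegrees $(0,\,p(2\ell{+}1))$ and $(0,\,2p\ell{+}1)$ respectively. The first algebra is concentrated in vertical degrees divisible by $p$, the second is not; they are simply not isomorphic as bigraded objects. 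The actual isomorphism of abutments is \emph{filtration-shifting}, sending $m_\ell^{p-1}\sigma m_\ell$ to (a unit times) $t^m \otimes \sigma m_\ell{}^{\otimes p}$ with $m=(p{-}1)/2$, and the shift accumulates over products of exterior generators. What saves the argument is that in each fixed total degree $d$ the shift is bounded (by a linear function of $d$), so the maps $\{f_n\}$ and $\{g_n\}$ on the finite Tate quotients are pro-isomorphisms rather than strict isomorphisms; one then passes to the limit. This pro-isomorphism estimate (Propositions~\ref{prop:fgmu} and~\ref{prop:fgbp}) is the technical heart of the proof, and your outline bypasses it entirely. The remark about Milnor primitives and chromatic truncation is heuristic motivation, not a substitute for this filtration bookkeeping.
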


This is then the main technical result that we will need to prove.
The argument relies on computations, and is not of a formal nature.
To effect these computations, we establish in Theorem~\ref{thm:omegasquare}
that for connective symmetric ring spectra $B$ there is a natural
commutative square
\begin{equation} \label{eq:omegasquare}
\xymatrix{
\T \ltimes B \ar[r]^-\omega \ar[d]_{\rho \ltimes \epsilon_B}
& \THH(B) \ar[d]^\gamma \\
\T/C_p \ltimes R_+(B) \ar[r]^-{\omega^t} & \THH(B)^{tC_p}
}
\end{equation}
in the stable homotopy category, where $\gamma$ is as above, $\rho \:
\T \to \T/C_p$ is the $p$-th root isomorphism of groups, $R_+(B) =
(B^{\wedge p})^{tC_p}$ is the \emph{topological Singer construction}
from \cite{LR:A}*{5.8} realizing the homological Singer construction
in continuous homology, and $\epsilon_B \: B \to R_+(B)$ is a natural
map inducing the continuous homomorphism $\epsilon_* \: H_*(B) \to
R_+(H_*(B)) \cong H^c_*(R_+(B))$ of complete $\A_*$-comodules.  The map
$\omega$ extends the inclusion of $0$-simplices $\eta \: B \to \THH(B)$
using the $\T$-action on the target, and the construction of $\omega^t$
is similar, but more elaborate.

To compute the continuous homology of $\THH(B)^{tC_p}$, we use
the homological Tate spectral sequence
$$
\hatE^2_{*,*} = \tH^{-*}(C_p; H_*(\THH(B)))
\Longrightarrow H^c_*(\THH(B)^{tC_p})
$$
recalled in Proposition~\ref{prop:homtatess}.  In the cases we are
interested in, $H_*(\THH(B))$ is generated as an algebra by the image of
$\omega_*$, so we can use the commutative square~\eqref{eq:omegasquare}
above, known formulas \cite{LR:A}*{\textsection 3.2.1} for $\epsilon_*$,
and new explicit formulas established in Theorem~\ref{thm:omegatformulas}
for $\omega^t_*$, in order to understand the completed $\A_*$-comodule
homomorphism $\gamma_*$.
These calculations are carried out for $B = MU$ and~$BP$ in
Propositions~\ref{prop:thhmutatess} and~\ref{prop:thhbptatess}, and
in Theorems~\ref{thm:gammamu} and~\ref{thm:gammabp}.  The proof of
Theorem~\ref{thm:homiso} is completed in Section~\ref{sec:segalconj},
the main steps being Propositions~\ref{prop:fgmu}
and~\ref{prop:fgbp}.

\section{Equivariant approximations}

In this section we specify our model for $\THH(B)$ in
Definition~\ref{dfn:thh}, and obtain a natural map
$$
\omega_p \: \T \ltimes_{C_p} B^{\wedge p} \to \sd_p \THH(B)
$$
to its $p$-fold edgewise subdivision in
Definition~\ref{dfn:sdpetapomegap}.  In the subsequent lemmas we analyze
its effect in homology, and in the B{\"o}kstedt spectral sequence
$HH_*(H_*(B)) \Longrightarrow H_*(\THH(B))$.
We assume some familiarity with equivariant spectra \cite{LMS86},
symmetric ring spectra \cite{HSS00}, edgewise subdivision of cyclic
objects \cite{BHM93}*{\textsection 1}, and the topological Hochschild
homology spectrum \cite{HM97}*{\textsection 2.4}.

Recall Connes' extension $\Lambda$ of the category $\Delta$.  A cyclic
space $X_\bullet$ is a contravariant functor from $\Lambda$ to spaces,
and its geometric realization $|X_\bullet|$ has a natural $\T$-action.
The $\T$-action on a $0$-simplex $x$ in $X_0$ traces out the closed loop
in $|X_\bullet|$ given by the $1$-simplex $t_1 s_0(x)$ in $X_1$.

Let $B$ be a symmetric ring spectrum, with $n$-th space $B_n$ for each
$n\ge0$, and let $I$ be B{\"o}kstedt's category \cite{B1} of finite sets
$\bfn = \{1, 2, \dots, n\}$ for $n\ge0$ and injective functions.

\begin{dfn} \label{dfn:thh}
For each $k\ge0$ and each finite-dimensional $\T$-representation $V$
let
$$
\thh(B; S^V)_k
= \hocolim_{(\bfn_0, \dots, \bfn_k) \in I^{k+1}}
\Map(S^{n_0} \wedge \dots \wedge S^{n_k},
	B_{n_0} \wedge \dots \wedge B_{n_k} \wedge S^V) \,.
$$
These spaces combine to a cyclic space $\thh(B; S^V)_\bullet$, with
Hochschild-style structure maps $d_i$, $s_j$ and $t_k$.
Its geometric realization $\thh(B; S^V)
= |\thh(B; S^V)_\bullet|$ has a natural $\T$-action, given as the diagonal
of the $\T$-action coming from the cyclic structure and the $\T$-action on
$S^V$.  These $\T$-spaces assemble to a $\T$-equivariant prespectrum
$\thh(B)$, with $V$-th space $\thh(B)(V) = \thh(B; S^V)$.  We let
$$
\THH(B) = L(\thh(B)^\tau)
$$
be the genuine $\T$-spectrum obtained by spectrification from a functorial
good thickening of this prespectrum.  (See \cite{HM97}*{\textsection A.1}
for the notion of a \emph{good} prespectrum, and a functorial construction
of such a good thickening.)
\end{dfn}

\begin{dfn}
Let $B^{\wedge 1}_{\pre}$ be the prespectrum with $V$-th space
$$
B^{\wedge 1}_{\pre}(V) = \thh(B; S^V)_0
= \hocolim_{\bfn \in I} \Map(S^n, B_n \wedge S^V) \,,
$$
and let $B^{\wedge 1} = L((B^{\wedge 1}_{\pre})^\tau)$.
The inclusion of $0$-simplices defines a natural map
$$
\eta \: B^{\wedge 1} \to \THH(B)
$$
of spectra.  Using the $\T$-action on the target, we can uniquely extend
$\eta$ to a map
$$
\omega \: \T \ltimes B^{\wedge 1} \to \THH(B)
$$
of $\T$-equivariant spectra, as in \cite{LMS86}*{II.4.1}.
\end{dfn}

We suppose hereafter that $B$ is connective.  Without loss of generality
we may also assume that $B$ is flat and convergent, as defined in
\cite{LR:A}*{\textsection 5.2}.
Then the natural maps
$$
B^{\wedge 1}_{\pre}(V) \overset{\simeq}{\longleftarrow}
B^{\wedge 1}_{\pre}(V)^\tau \overset{\simeq}{\longrightarrow}
B^{\wedge 1}(V)
$$
and
$$
\thh(B)(V) \overset{\simeq}{\longleftarrow}
\thh(B)(V)^\tau \overset{\simeq}{\longrightarrow} \THH(B)(V)
$$
are $C$-equivariant equivalences for each finite subgroup $C \subset \T$,
by \cite{HM97}*{Prop.~2.4}.  Furthermore, there is a natural chain of
weak equivalences relating $B^{\wedge 1}$ and the Lewis--May spectrum
associated to $B$, see \cite{LR:A}*{5.5}.  Hence we shall simply write
$B$ for $B^{\wedge 1}$, especially in the context of the maps
$\eta \: B \to \THH(B)$ and $\omega \: \T \ltimes B \to \THH(B)$.

For any simplicial space $X_\bullet$, its $p$-fold edgewise subdivision
$\sd_p X_\bullet$ is obtained by precomposing the contravariant functor
$X_\bullet$ with the functor
$$
\sd_p \: \Delta \to \Delta
$$
that takes the object $[k] = \{0 < 1 < \dots < k\}$ to its $p$-fold
concatenation
$$
\sd_p([k]) = [k] \sqcup \dots \sqcup [k] = [p(k{+}1){-}1] \,,
$$
for each $k\ge0$, and likewise for morphisms: $\sd_p(f) = f \sqcup \dots
\sqcup f$.  Hence $(\sd_p X)_k = X_{p(k+1)-1}$.  There is a natural
homeomorphism \cite{BHM93}*{1.1}
$$
D \: |\sd_p X_\bullet| \overset{\cong}\longto |X_\bullet| \,,
$$
induced by the diagonal embeddings $\Delta^k \to \Delta^{p(k+1)-1}$
given in barycentric coordinates as
$$
u \mapsto (u/p, \dots, u/p)
$$
for $u = (u_0, \dots, u_k) \in \Delta^k$, with $\sum_i u_i = 1$,
$u_i\ge0$.  In more detail, this is the map 
$$
D \: \coprod_{k\ge0} (\sd_p X)_k \times \Delta^k/{\sim}
\longto
\coprod_{\ell\ge0} X_\ell \times \Delta^\ell/{\sim}
$$
that takes the $k$-th summand to the $\ell$-th one, with $\ell =
p(k+1)-1$, via the identity $(\sd_p X)_k = X_\ell$ and the given embedding
$\Delta^k \to \Delta^\ell$.

\begin{dfn} \label{dfn:e}
There is a natural simplicial map $e_\bullet \: \sd_p X_\bullet \to
X_\bullet$, induced by the natural transformation $\id \to \sd_p$
of functors $\Delta \to \Delta$, with components $[k] \to \sd_p([k])$
given by inclusion on the $p$-th (= last) copy of $[k]$, for $k\ge0$.
This is the order-preserving function that omits the $(p-1)(k+1)$ first
elements in the target, so in simplicial degree~$k$,
\begin{equation} \label{eq:ek}
e_k = d_0^{(p-1)(k+1)} \: (\sd_p X)_k = X_{p(k+1)-1} \to X_k
\end{equation}
is equal to the $(p-1)(k+1)$-fold iterate of the $0$-th face map.
After geometric realization,
$$
e = |e_\bullet| \: |\sd_p X_\bullet| \to |X_\bullet|
$$
is induced by the corresponding iterated face maps
$\delta_0^{(p-1)(k+1)} \: \Delta^k \to \Delta^{p(k+1)-1}$, given
in barycentric coordinates as
$$
u \mapsto (0, \dots, 0, u)
$$
for $u \in \Delta^k$, as above.
\end{dfn}

\begin{lemma} \label{lem:Dehtpy}
There is a natural homotopy $D \simeq e = |e_\bullet|$ of maps $|\sd_p
X_\bullet| \to |X_\bullet|$.
\end{lemma}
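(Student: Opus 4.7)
The plan is to construct an explicit natural homotopy $H \colon |\sd_p X_\bullet| \times I \to |X_\bullet|$ by straight-line interpolation in barycentric coordinates. Given a representative $(x, u)$ with $x \in (\sd_p X_\bullet)_k = X_{p(k+1)-1}$ and $u \in \Delta^k$, set $\ell = p(k+1) - 1$ and define
$$
H((x, u), t) = \bigl[ x,\ (1-t)(u/p, \dots, u/p) + t(0, \dots, 0, u) \bigr],
$$
where the bracket denotes the equivalence class in $\coprod_\ell X_\ell \times \Delta^\ell / {\sim}$. Since $\Delta^\ell$ is convex and both $(u/p, \dots, u/p)$ and $(0, \dots, 0, u)$ lie in $\Delta^\ell$, the formula makes sense for each $t \in I$.

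The endpoints are correct by inspection: $H(\cdot, 0)$ reproduces $D$ by the definition given in the paper, and $H(\cdot, 1) = e$ follows from the realization identification
$$
\bigl[ x, \delta_0^{(p-1)(k+1)} u \bigr] = \bigl[ d_0^{(p-1)(k+1)} x, u \bigr] = [e_k(x), u],
$$
since $e$ is induced by $e_k = d_0^{(p-1)(k+1)}$. Naturality in $X_\bullet$ is manifest from the explicit formula.

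The one nontrivial verification is that $H$ descends to the quotient $|\sd_p X_\bullet| \times I$, i.e.\ that it respects the coend relations in $|\sd_p X_\bullet|$. For a morphism $f \colon [k] \to [k']$ in $\Delta$, the functor $\sd_p$ sends $f$ to the block-diagonal map $\sd_p(f) = f \sqcup \cdots \sqcup f \colon [p(k{+}1){-}1] \to [p(k'{+}1){-}1]$, which induces an affine map $\sd_p(f)_* \colon \Delta^{p(k+1)-1} \to \Delta^{p(k'+1)-1}$. By affine linearity of $\sd_p(f)_*$ and $f_*$, the compatibility of $H$ with the relation $(\sd_p(f)^* y, u) \sim (y, f_* u)$ reduces to the two identities
$$
\sd_p(f)_*(u/p, \dots, u/p) = ((f_* u)/p, \dots, (f_* u)/p),
$$
$$
\sd_p(f)_*(0, \dots, 0, u) = (0, \dots, 0, f_* u),
$$
both of which are immediate from the block-diagonal description of $\sd_p(f)$: the first uses that $\sd_p(f)_*$ restricted to each of the $p$ diagonal blocks is $f_*$ itself, and the second uses that $\sd_p(f)$ sends the last copy of $[k]$ in the domain isomorphically (via $f$) to the last copy of $[k']$ in the codomain, while the preceding blocks carry only zero weight.

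No step poses a real obstacle; the potential concern is well-definedness, but this is forced by the functoriality of $\sd_p$ and the observation that both $D$ and $e$ are coend realizations of two affine maps on each simplex with respect to a consistent system of barycentric coordinates, so the straight-line homotopy between them is automatically compatible with the equivalence relations.
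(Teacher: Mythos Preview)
Your proof is correct and takes essentially the same approach as the paper: the convex linear combination $(1-t)(u/p,\dots,u/p)+t(0,\dots,0,u)$ on each simplex. The paper states this formula without further justification, whereas you additionally verify well-definedness on the coend; your check via the block-diagonal form of $\sd_p(f)_*$ is accurate.
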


\begin{proof}
The homotopy is given by the convex linear combination
$$
(t, u) \mapsto (1-t)(u/p, \dots, u/p) + t(0, \dots, 0, u)
$$
for $t \in [0,1]$, $u \in \Delta^k$, of the two maps
$\Delta^k \to \Delta^{p(k+1)-1}$.
\end{proof}

Recall the ``$p$-fold cover'' $\Lambda_p \to \Lambda$ \cite{BHM93}*{1.5}.
Roughly, $\Lambda_p$ is like Connes' category $\Lambda$, except that the
cyclic morphism $\tau_k \: [k] \to [k]$ satisfies $\tau_k^{p(k+1)} =
\id$ instead of $\tau_k^{k+1} = \id$.
A $\Lambda_p$-space is a contravariant functor from $\Lambda_p$ to spaces.
When $X_\bullet$ is a cyclic space, with cyclic operators $t_k \: X_k
\to X_k$ satisfying $t_k^{k+1} = \id$ for $k\ge0$, the $p$-fold edgewise
subdivision $\sd_p X_\bullet$ is a $\Lambda_p$-space, with operators
$t_k \: (\sd_p X)_k \to (\sd_p X)_k$ satisfying $t_k^{p(k+1)} = \id$,
defined to be equal to the operators $t_{p(k+1)-1} \: X_{p(k+1)-1}
\to X_{p(k+1)-1}$.

\begin{lemma} \label{lem:Taction}
Let $X_\bullet$ be a cyclic space.
There is a natural $\T$-action on $|\sd_p X_\bullet|$ such that
$D \: |\sd_p X_\bullet| \to |X_\bullet|$ is a $\T$-equivariant
homeomorphism.  The action of the subgroup $C_p \subset \T$ is
simplicial, in the sense that it comes from a $C_p$-action on
$\sd_p X_\bullet$, such that a generator of $C_p$ acts by 
$$
t_k^{k+1} \: (\sd_p X)_k \to (\sd_p X)_k
$$
for $k\ge0$.  In particular, the inclusion of $0$-simplices
$(\sd_p X)_0 \subset |\sd_p X_\bullet|$ is $C_p$-equivariant.

The $\T$-action on a $0$-simplex $y$ in $(\sd_p X)_0 = X_{p-1}$
traces out the closed loop in $|\sd_p X_\bullet|$ given by the
chain of $p$ $1$-simplices
$$
t_1 s_0(y) \ ,\  t_1^3 s_0(y) \ ,\  \dots \ ,\  t_1^{2p-1} s_0(y)
$$
in $(\sd_p X)_1 = X_{2p-1}$.  These meet at the $p$ $0$-simplices
$$
y \ ,\  t_1^2(y) \ ,\  \dots \ ,\  t_1^{2p-2}(y)
$$
that constitute the $C_p$-orbit of $y$.
\end{lemma}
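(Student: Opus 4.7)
The plan is to transport the $\T$-action from $|X_\bullet|$ to $|\sd_p X_\bullet|$ via the homeomorphism $D$ of Lemma~\ref{lem:Dehtpy}: by defining the $\T$-action on $|\sd_p X_\bullet|$ as the unique one making $D$ equivariant, $D$ becomes a $\T$-equivariant homeomorphism by construction, and naturality in $X_\bullet$ follows from the naturality of $D$. What remains is to identify this action concretely on simplices and on the orbit of a $0$-simplex.

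To see that the restricted $C_p$-action is simplicial and is given in degree~$k$ by $t_k^{k+1}$, I would compute directly. The canonical $\T$-action on $|X_\bullet|$ sends a point $(x,u) \in X_n\times\Delta^n$ under rotation by $1/(n+1)$ to $(t_n x, u')$, where $u'$ is the cyclic shift of $u$ by one position. Under $D$ a $k$-simplex $(x,u) \in (\sd_p X)_k\times\Delta^k = X_{p(k+1)-1}\times\Delta^k$ is sent to $(x,\bar u) \in X_{p(k+1)-1}\times\Delta^{p(k+1)-1}$ with $\bar u = (u/p,\dots,u/p)$, so rotation by $1/p$ in $\T$ corresponds to shifting $\bar u$ cyclically by $k+1$ positions. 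Because the entries of $\bar u$ repeat in $p$ identical blocks that shift fixes $\bar u$, and the action reduces to replacing $x$ by $t_{p(k+1)-1}^{k+1}(x)$, which is $t_k^{k+1}(x)$ in $\sd_p X$-notation. The identity $t_k^{p(k+1)}=\id$ coming from the $\Lambda_p$-structure then confirms that this defines a $C_p$-action, and $C_p$-equivariance of the inclusion $(\sd_p X)_0 \subset |\sd_p X_\bullet|$ is then immediate.

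For the loop traced by a $0$-simplex $y \in (\sd_p X)_0 = X_{p-1}$, note that $D(y)$ is the barycenter $(y,(1/p,\dots,1/p))$ of the $(p-1)$-simplex of $|X_\bullet|$ attached to $y$; its $\T$-orbit is a closed loop that, by the previous paragraph, passes through the $p$ points of the simplicial $C_p$-orbit of $y$ at rotation parameters $0,1/p,\dots,(p-1)/p$. A direct calculation with the $\Lambda_p$-identities $d_0 t_1 = d_1$, $d_1 t_1 = t_0 d_0$ and $d_1 s_0 = \id$ gives $d_0(t_1 s_0 y) = y$ and $d_1(t_1 s_0 y) = t_0(y)$, so the $1$-simplex $t_1 s_0(y) \in (\sd_p X)_1 = X_{2p-1}$ has faces equal to the first two consecutive vertices of the orbit. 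Applying the simplicial $C_p$-generator, which acts in degree~$1$ as $t_1^2$, to this $1$-simplex then produces the full chain $t_1^{2i+1} s_0(y)$ for $i = 0,\dots,p-1$ making up the loop, with vertices the $C_p$-orbit of $y$.

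The main obstacle will be verifying that the first arc of the $\T$-orbit, from $y$ until the next $C_p$-vertex, really is the realization of $t_1 s_0(y)$ and not of some other $1$-simplex with the same endpoints. For this I would use the explicit description of the $\T$-action on $|X_\bullet|$ in terms of the cyclic operator $\tau_{p-1}$ acting on $\Delta^{p-1}$ as rotation by $2\pi/p$ about the barycenter, and trace how a small $\T$-rotation of the point $(1/p,\dots,1/p)$ moves into the interior of an adjacent $(2p-1)$-simplex coming from a specific iterated degeneracy. The formula for $D$ then identifies this simplex with $t_1 s_0(y) \in X_{2p-1}$, completing the argument.
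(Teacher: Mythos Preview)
Your argument is correct in outline, and in fact considerably more detailed than what the paper does: the paper's proof of this lemma is a one-line citation, ``See \cite{BHM93}*{1.6} and the surrounding discussion.''  What you have written is essentially a reconstruction of the relevant content of that reference---transporting the $\T$-action along $D$, identifying the simplicial $C_p$-action via the $\Lambda_p$-operator $t_k^{k+1}$, and reading off the orbit of a $0$-simplex from the paracyclic identities.  So there is no disagreement in approach; you simply supply the details that the paper outsources.  One small correction: $D$ is defined in the text preceding Lemma~\ref{lem:Dehtpy}, not in that lemma itself (which concerns the homotopy $D\simeq e$).
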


\begin{proof}
See \cite{BHM93}*{1.6} and the surrounding discussion.
\end{proof}

\begin{dfn} \label{dfn:sdpetapomegap}
Let $\sd_p \thh(B) = |\sd_p \thh(B)_\bullet|$ be the $\T$-equivariant
prespectrum with $V$-th space $\sd_p \thh(B)(V) = |\sd_p \thh(B;
S^V)_\bullet|$, and let $\sd_p \THH(B) = L(\sd_p \thh(B)^\tau)$
be the spectrification of its good thickening, as in
Definition~\ref{dfn:thh}.  In simplicial degree~$0$,
$$
\sd_p \thh(B; S^V)_0 = \thh(B; S^V)_{p-1} = B^{\wedge p}_{\pre}(V)
$$
in the notation of \cite{LR:A}*{5.3}.  Hence the inclusion of
$0$-simplices defines a natural map
$$
\eta_p \: B^{\wedge p} \to \sd_p \THH(B)
$$
of $C_p$-equivariant spectra, where $B^{\wedge p} = L((B^{\wedge
p}_{\pre})^\tau)$.  Using the full $\T$-action on the target, we
can uniquely extend $\eta_p$ to a map
$$
\omega_p \: \T \ltimes_{C_p} B^{\wedge p} \to \sd_p \THH(B)
$$
of $\T$-equivariant spectra, again as in \cite{LMS86}*{II.4.1}.
\end{dfn}

Combining these constructions, we have a $\T$-equivariant homeomorphism
$$
D \: \sd_p \THH(B) \overset{\cong}\longto \THH(B) \,.
$$

\begin{lemma} \label{lem:Detap}
The composite
$$
D \circ \eta_p \: B^{\wedge p} \to \THH(B)
$$
is homotopic to the composite
$$
e \circ \eta_p = \eta \circ d_0^{p-1} \: B^{\wedge p} \to \THH(B)
$$
where $d_0^{p-1} \: B^{\wedge p} \to B^{\wedge 1} \simeq B$ is
the $p$-fold multiplication map.
\end{lemma}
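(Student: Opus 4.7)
The plan is to deduce this directly from Lemma~\ref{lem:Dehtpy} by applying it levelwise to the cyclic space $\thh(B; S^V)_\bullet$ and then unwinding the definitions of $\eta_p$ and $\eta$ on $0$-simplices. First, for each finite-dimensional $\T$-representation $V$, Lemma~\ref{lem:Dehtpy} furnishes a natural homotopy between $D$ and $e = |e_\bullet|$ as maps $|\sd_p \thh(B; S^V)_\bullet| \to |\thh(B; S^V)_\bullet|$. Because the formula for this homotopy is a convex linear combination of barycentric coordinates that does not depend on $V$, it is natural in the cyclic space argument and hence assembles to a homotopy of $\T$-equivariant prespectrum maps $\sd_p \thh(B) \to \thh(B)$. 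Passing through the functorial thickening $(-)^\tau$ and spectrification $L$ used in Definition~\ref{dfn:thh}, we obtain a homotopy $D \simeq e$ of spectrum maps $\sd_p \THH(B) \to \THH(B)$.

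Precomposing this homotopy with $\eta_p \: B^{\wedge p} \to \sd_p \THH(B)$ yields $D \circ \eta_p \simeq e \circ \eta_p$, so it remains to identify $e \circ \eta_p$ with $\eta \circ d_0^{p-1}$ strictly. By Definition~\ref{dfn:sdpetapomegap}, $\eta_p$ is the inclusion of $0$-simplices $(\sd_p \thh(B; S^V))_0 = \thh(B; S^V)_{p-1} = B^{\wedge p}_{\pre}(V)$ into the realization. By Definition~\ref{dfn:e}, $e$ is realized from a simplicial map whose component in degree~$0$ is $e_0 = d_0^{p-1} \: (\sd_p X)_0 \to X_0$. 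Therefore $e \circ \eta_p$ factors as $d_0^{p-1} \: B^{\wedge p}_{\pre}(V) \to \thh(B; S^V)_0 = B^{\wedge 1}_{\pre}(V)$ followed by the inclusion of $0$-simplices into $|\thh(B; S^V)_\bullet|$, which is exactly $\eta \circ d_0^{p-1}$ at the prespectrum level. Applying the same functorial thickening and spectrification preserves this equality on the nose.

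Finally, I would record that the face map $d_0$ in the cyclic structure on $\thh(B; S^V)_\bullet$ is the Hochschild-style map that multiplies the first two smash factors, so its $(p-1)$-fold iterate on $B^{\wedge p}_{\pre}(V)$ implements the $p$-fold multiplication $B^{\wedge p} \to B^{\wedge 1} \simeq B$ alluded to in the statement.

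The only real obstacle is a bookkeeping one: checking that the naturality of the homotopy from Lemma~\ref{lem:Dehtpy} is sufficient to promote it to a homotopy of $\T$-equivariant spectrum maps after the functorial thickening and spectrification. Since the homotopy is defined by a single universal formula on simplices that commutes with all simplicial and cyclic structure maps, and since $(-)^\tau$ and $L$ are functorial and homotopy-preserving on good prespectra by \cite{HM97}*{\textsection A.1}, this step is formal and the lemma follows.
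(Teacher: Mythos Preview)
Your proof is correct and follows exactly the paper's approach: invoke the homotopy of Lemma~\ref{lem:Dehtpy} and then identify $e \circ \eta_p = \eta \circ d_0^{p-1}$ via the formula $e_0 = d_0^{p-1}$ from~\eqref{eq:ek} in simplicial degree~$0$. One minor imprecision: the map $e$ is not $\T$-equivariant, so the homotopy $D \simeq e$ is only a non-equivariant homotopy of prespectrum maps (not a homotopy of $\T$-equivariant maps), but this is all the lemma requires.
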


\begin{proof}
The homotopy is that of Lemma~\ref{lem:Dehtpy}.  The identity $e \circ
\eta_p = \eta \circ d_0^{p-1}$ amounts to the formula~\eqref{eq:ek}
for $e_k$, in the special case $k=0$.
\end{proof}

To analyze the extension $\omega_p$ of $\eta_p$, we pass to homology.

\begin{dfn} \label{dfn:tcw}
The circle $\T$, with the subgroup action of $C_p$, is a free $C_p$-CW
complex.  Its mod~$p$ cellular complex $C_*(\T) = \F_p[C_p]\{e_0, e_1\}$
has boundary operator $d(e_1) = (T-1)e_0$, where $T \in C_p$ is the
generator mapping to $\exp(2\pi i/p) \in \T$, $e_1$ is the $1$-cell
covering the arc from $1$ to $T$, and $e_0$ is the $0$-cell $1$.
\end{dfn}

Let $C_*(B)$ be the cellular complex of a CW-approximation to $B$,
and choose a quasi-isomorphism $H_*(B) \simeq C_*(B)$, taking each
homology class to a representing cycle in its class.

\begin{lemma} \label{lem:htcpbp}
There are quasi-isomorphisms
$$
C_*(\T \ltimes_{C_p} B^{\wedge p})
\simeq
C_*(\T) \otimes_{C_p} C_*(B)^{\otimes p}
\simeq
C_*(\T) \otimes_{C_p} H_*(B)^{\otimes p}
$$
and natural isomorphisms
\begin{align*}
H_*(\T \ltimes_{C_p} B^{\wedge p})
&\cong
H_*(C_*(\T) \otimes_{C_p} H_*(B)^{\otimes p}) \\
&\cong
\cok(T-1)\{e_0\} \oplus \ker(T-1)\{e_1\}
\end{align*}
where
$$
T-1 \: H_*(B)^{\otimes p} \to H_*(B)^{\otimes p}
$$
is the difference between the cyclic permutation, induced by the action
of $T \in C_p$ on $B^{\wedge p}$, and the identity.
\end{lemma}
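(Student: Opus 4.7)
The plan is to identify $\T \ltimes_{C_p} B^{\wedge p}$ with $\T_+ \wedge_{C_p} B^{\wedge p}$ and then reduce the whole statement to a Künneth-style chain computation. The key observation is that, by Definition~\ref{dfn:tcw}, $\T$ is a free $C_p$-CW complex, so its mod~$p$ cellular chain complex $C_*(\T) = \F_p[C_p]\{e_0, e_1\}$ is a \emph{free} $\F_p[C_p]$-module. Consequently the functor $C_*(\T) \otimes_{C_p}(-)$, from chain complexes of $\F_p[C_p]$-modules to chain complexes of $\F_p$-modules, is exact and therefore preserves quasi-isomorphisms.

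First I would use the Künneth isomorphism for cellular chains of a smash product to identify
$$
C_*(\T_+ \wedge B^{\wedge p}) \;\cong\; C_*(\T) \otimes_{\F_p} C_*(B)^{\otimes p}
$$
as complexes of $\F_p[C_p]$-modules, where $C_p$ acts diagonally: by the cellular rotation on the left factor, and by cyclic permutation (with the usual Koszul signs) on $C_*(B)^{\otimes p}$. Taking $C_p$-coinvariants and using that $\T \ltimes_{C_p} B^{\wedge p} = \T_+ \wedge_{C_p} B^{\wedge p}$ gives the first quasi-isomorphism
$$
C_*(\T \ltimes_{C_p} B^{\wedge p}) \;\simeq\; C_*(\T) \otimes_{C_p} C_*(B)^{\otimes p}.
$$
Next, the chosen quasi-isomorphism $H_*(B) \simeq C_*(B)$ gives, by the Künneth theorem over the field $\F_p$, a $C_p$-equivariant quasi-isomorphism $H_*(B)^{\otimes p} \simeq C_*(B)^{\otimes p}$. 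Applying the exact functor $C_*(\T)\otimes_{C_p}(-)$ transports this to the second claimed quasi-isomorphism.

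It remains to compute $H_*(C_*(\T) \otimes_{C_p} H_*(B)^{\otimes p})$ explicitly. Because $C_*(\T) = \F_p[C_p]\{e_0\}\oplus \F_p[C_p]\{e_1\}$ is free over $\F_p[C_p]$ on $e_0$ and $e_1$, the tensor product over $C_p$ is simply
$$
\bigl(H_*(B)^{\otimes p}\bigr)\{e_0\} \;\oplus\; \bigl(H_*(B)^{\otimes p}\bigr)\{e_1\},
$$
with differential induced by $d(e_1) = (T-1)e_0$, that is, $d(x \otimes e_1) = (T-1)x \otimes e_0$ and $d(x\otimes e_0) = 0$. Its homology is precisely $\cok(T-1)\{e_0\} \oplus \ker(T-1)\{e_1\}$, which produces the asserted direct sum decomposition of $H_*(\T \ltimes_{C_p} B^{\wedge p})$.

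The main subtlety is not the homological algebra but the bookkeeping for the $C_p$-action: one must check that under the Künneth identification the generator $T$ acts on $C_*(B)^{\otimes p}$ (and hence on $H_*(B)^{\otimes p}$) by the cyclic permutation induced from the action on $B^{\wedge p}$, including the appropriate Koszul signs, so that the same operator $T-1$ appearing in the differential $d(e_1) = (T-1)e_0$ also describes the action on $H_*(B)^{\otimes p}$. Once this compatibility is in place, exactness of $C_*(\T)\otimes_{C_p}(-)$ removes any need to worry that $C_p$ fails to act freely on $H_*(B)^{\otimes p}$, and the proof is complete.
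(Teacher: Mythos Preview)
Your proposal is correct and follows essentially the same approach as the paper: identify the half-smash as $\T_+ \wedge_{C_p} B^{\wedge p}$, use K\"unneth and the $C_p$-freeness of $C_*(\T)$ to obtain the two quasi-isomorphisms, and then compute the homology of the resulting two-term complex directly. The paper is terser, citing \cite{LMS86}*{VIII.1.6, VIII.2.9} for the passage from the orbit spectrum to chain-level $C_p$-coinvariants and \cite{LR:A}*{5.5} to compare $B^{\wedge p}$ with the external power $B^{(p)}$, whereas you spell out the exactness argument for $C_*(\T)\otimes_{C_p}(-)$; but the substance is the same.
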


\begin{proof}
The first quasi-isomorphism follows from \cite{LMS86}*{VIII.1.6}
and \cite{LMS86}*{VIII.2.9}, combined with cellular approximation.
We use \cite{LR:A}*{5.5} to compare $B^{\wedge p}$ with the
$p$-th external power $B^{(p)}$.
The second quasi-isomorphism is then clear, since $C_*(\T)$ is $C_p$-free.
This implies the first isomorphism.  The second isomorphism is simply
the computation of the homology of the complex
$$
d \: H_*(B)^{\otimes p}\{e_1\} \to H_*(B)^{\otimes p}\{e_0\} \,,
$$
where $d$ maps $e_1 \otimes (\alpha_1 \otimes \dots \otimes \alpha_p)$
to $e_0 \otimes (T-1)(\alpha_1 \otimes \dots \otimes \alpha_p)$.
\end{proof}

\begin{dfn} \label{dfn:apm1wedgea}
For $\alpha \in H_q(B)$ let
$e_1 \otimes \alpha^{\otimes p} \in H_{1+pq}(\T \ltimes_{C_p} B^{\wedge p})$
denote the homology class of $e_1 \otimes \alpha^{\otimes p}$
in $C_*(\T) \otimes_{C_p} H_*(B)^{\otimes p}$, and let
$$
\alpha^{p-1} \wedge \alpha
	= (D\circ\omega_p)_*(e_1 \otimes \alpha^{\otimes p})
\in H_{1+pq}(\THH(B))
$$
denote its image under
$D \circ \omega_p \: \T \ltimes_{C_p} B^{\wedge p} \to \THH(B)$.
\end{dfn}

The following spectral sequence was used by B{\"o}kstedt \cite{B1} to
compute $\THH(\Z/p)$ and $\THH(\Z)$.  See \cite{MS93}*{3.1},
\cite{AR05}*{\textsection 4} for more background.

\begin{lemma} \label{lem:bss}
In the B{\"o}kstedt spectral sequence
$$
E^2_{*,*} = HH_*(H_*(B)) \Longrightarrow H_*(\THH(B))
$$
the class $\alpha^{p-1} \wedge \alpha$ is represented
modulo filtration~$0$ by the homology class of the Hochschild
$1$-cycle
$$
\alpha^{p-1} \otimes \alpha \in H_*(B) \otimes H_*(B) \,.
$$
\end{lemma}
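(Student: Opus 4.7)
My plan is to reduce the geometric construction of $\alpha^{p-1} \wedge \alpha$ to a concrete simplicial-level computation by replacing the diagonal homeomorphism $D$ with the simplicial map $e$ of Definition~\ref{dfn:e}. Lemma~\ref{lem:Dehtpy} gives a homotopy $D \simeq e$, so the two induce the same homomorphism on homology; but $e$ is a morphism of simplicial spaces, so it respects the skeletal filtration underlying the B\"okstedt spectral sequence. It therefore suffices to identify the image of $e_1 \otimes \alpha^{\otimes p}$ under $e \circ \omega_p$ as an element of the $1$-skeleton, and then to read off its class in $E^1_{1,*} = H_*(B) \otimes H_*(B)$.

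By Lemma~\ref{lem:Taction}, the full $\T$-orbit of $\alpha^{\otimes p} \in (\sd_p \thh)_0 = \thh_{p-1}$ is covered by the chain of $1$-simplices $t_1^{2j+1} s_0(\alpha^{\otimes p})$ for $j=0,\dots,p-1$, viewed as elements of $(\sd_p \thh)_1 = \thh_{2p-1}$. The $1$-cell $e_1 \subset \T$ from Definition~\ref{dfn:tcw}, carrying the arc from~$1$ to~$T$, corresponds under $\omega_p$ to the first of these, namely $t_1 s_0(\alpha^{\otimes p})$. Applying $e_1 = d_0^{2(p-1)} \: \thh_{2p-1} \to \thh_1$ then produces a $1$-simplex in $\thh_1 \cong B \wedge B$, whose homology class is the required $E^1_{1,*}$-representative and places $\alpha^{p-1} \wedge \alpha$ in filtration $\leq 1$.

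The explicit evaluation proceeds in the cyclic bar convention, in which degeneracies insert units. The edgewise degeneracy $(s_0)_{\sd_p}$ is pulled back from the order-preserving surjection $[2p-1] \to [p-1]$ that sends $2j$ and $2j{+}1$ to $j$; factoring this surjection into basic codegeneracies exhibits $(s_0)_{\sd_p}$ as a composite of standard $s_i$'s whose net effect is to insert units between consecutive entries and at the end, producing the alternating tuple $(\alpha, 1, \alpha, 1, \dots, \alpha, 1)$ of length~$2p$. The cyclic operator $t_{2p-1}$ rotates this to $(1, \alpha, 1, \alpha, \dots, 1, \alpha)$, after which the $2(p-1)$ iterated face maps $d_0$ successively multiply the first two entries; since those entries alternate $1, \alpha, 1, \alpha, \dots$, each pair of applications accumulates one power of $\alpha$ in the leading slot while leaving the trailing $\alpha$ untouched, so the tuple collapses to $(\alpha^{p-1}, \alpha)$, i.e.\ the element $\alpha^{p-1} \otimes \alpha \in B \wedge B$.

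A direct check confirms that $\alpha^{p-1} \otimes \alpha$ is a Hochschild $1$-cycle: the boundary is a scalar multiple of $\alpha^p$ that vanishes because $p-1$ is even for odd $p$ and because $2=0$ at $p=2$. Hence $\alpha^{p-1} \otimes \alpha$ defines a class on $E^2_{1,*} = HH_1(H_*(B))$ that represents $\alpha^{p-1} \wedge \alpha$ modulo filtration~$0$, as claimed. The principal obstacle is the bookkeeping in the third paragraph: pinning down the correct factorization of the edgewise degeneracy and carefully tracking the alternating tuple through the cyclic shift and the iterated $d_0$'s; everything else is formal.
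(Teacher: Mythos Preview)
Your proof is correct and follows the same approach as the paper: replace $D$ by the simplicial map $e$ via Lemma~\ref{lem:Dehtpy}, use Lemma~\ref{lem:Taction} to identify the relevant $1$-simplex as $t_1 s_0(\alpha^{\otimes p})$ in $(\sd_p\thh)_1 = \thh_{2p-1}$, and then apply $d_0^{2p-2}$ to obtain $\alpha^{p-1}\otimes\alpha$. The paper's proof is terser---it records the computation $d_0^{2p-2} t_1 s_0(\alpha^{\otimes p}) = d_0^{2p-2}(1\otimes\alpha\otimes\cdots\otimes 1\otimes\alpha) = \alpha^{p-1}\otimes\alpha$ in one line and omits the explicit cycle check---but the argument is the same.
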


\begin{proof}
By Lemma~\ref{lem:Dehtpy} we may compute $\alpha^{p-1} \wedge \alpha$ as
the image of $e_1 \otimes \alpha^{\otimes p}$ under the cellular map $e
\circ \omega_p$.  By the second part of Lemma~\ref{lem:Taction}, and the
formula~\eqref{eq:ek} for $e = |e_\bullet|$ in simplicial degree~$k=1$,
the cycle $e_1 \otimes \alpha^{\otimes p}$ maps under $e \circ \omega_p$
to the cycle
$$
d_0^{2p-2} t_1 s_0(\alpha^{\otimes p}) = d_0^{2p-2}(1 \otimes \alpha
\otimes \dots \otimes 1 \otimes \alpha) = \alpha^{p-1} \otimes \alpha
$$
in the B{\"o}kstedt spectral sequence $E^1$-term.
\end{proof}

We can be more specific about $\alpha^{p-1} \wedge \alpha$ under
additional commutativity hypotheses.  The homotopy cofiber sequence
$$
B \overset{i}\longto \T \ltimes B \overset{j}\longto S^1 \wedge B
$$
is stably split by the collapse map $c \:  \T \ltimes B \to B$, and
its homotopy fiber map, a section $s \: S^1 \wedge B \to \T \ltimes B$.
We let $\sigma \: S^1 \wedge B \to \THH(B)$ be the composite
map
$$
S^1 \wedge B \overset{s}\longto \T \ltimes B
\overset{\omega}\longto \THH(B) \,.
$$
For $\alpha \in H_q(B)$, the homology class $\sigma\alpha =
\sigma_*(\alpha) \in H_{1+q}(\THH(B))$ is represented by the Hochschild
$1$-cycle $1 \otimes \alpha \in H_*(B) \otimes H_*(B)$ in the B{\"o}kstedt
spectral sequence above, see \cite{MS93}*{3.2}.

When $B$ is an $E_2$ symmetric ring spectrum, it follows from \cite{BFV07}
that $\THH(B)$ is an $E_1 = A_\infty$ ring spectrum, hence equivalent to
an (associative) $S$-algebra.  Furthermore, the B{\"o}kstedt spectral
sequence is an algebra spectral sequence, with product induced by the
shuffle product on the Hochschild complex.
See e.g.~\cite{AR05}*{4.2}.

\begin{lemma} \label{lem:apm1amodeta}
Suppose that $B$ is an $E_2$ symmetric ring spectrum.
Then for $\alpha \in H_q(B)$ we have
$$
\alpha^{p-1} \wedge \alpha \equiv \alpha^{p-1} \cdot \sigma\alpha
$$
in $H_*(\THH(B))$,
modulo the image of $\eta_* \: H_*(B) \to H_*(\THH(B))$.
\end{lemma}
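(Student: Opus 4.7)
The plan is to compare the representatives of both sides in the B\"okstedt spectral sequence $E^2_{*,*} = HH_*(H_*(B)) \Longrightarrow H_*(\THH(B))$. By Lemma~\ref{lem:bss}, the class $\alpha^{p-1} \wedge \alpha$ is represented in filtration~$1$ by the Hochschild $1$-cycle $\alpha^{p-1} \otimes \alpha$, modulo filtration~$0$. By the result from \cite{MS93}*{3.2} cited just before the lemma, the class $\sigma\alpha$ is represented in filtration~$1$ by the Hochschild $1$-cycle $1 \otimes \alpha$, and the image $\eta_*(\alpha^{p-1}) \in H_*(\THH(B))$ sits in filtration~$0$, represented by the Hochschild $0$-cycle $\alpha^{p-1}$.

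Using the $E_2$ hypothesis on $B$, the results of \cite{BFV07} give $\THH(B)$ the structure of an $S$-algebra, so the B\"okstedt spectral sequence is a spectral sequence of algebras, with $E^2$-product induced by the shuffle product on the Hochschild complex (cf.\ \cite{AR05}*{4.2}). I would then compute the shuffle product of $\alpha^{p-1}$, viewed as an element of $HH_0(H_*(B)) = H_*(B)$, with $1 \otimes \alpha \in HH_1(H_*(B))$. Since the first factor is a $0$-chain, the shuffle collapses to plain concatenation and produces the Hochschild $1$-cycle $\alpha^{p-1} \otimes \alpha$, with trivial sign. Hence $\alpha^{p-1} \cdot \sigma\alpha$ and $\alpha^{p-1} \wedge \alpha$ are represented at $E^2$ by the same filtration~$1$ class.

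It then follows that the difference $\alpha^{p-1} \wedge \alpha - \alpha^{p-1} \cdot \sigma\alpha$ has an $E^\infty$-representative of filtration~$\le 0$, hence lies in the filtration~$0$ subgroup $F_0 H_*(\THH(B))$. The last step of the plan is to identify this subgroup with the image of $\eta_* \: H_*(B) \to H_*(\THH(B))$, which follows from the fact that in the skeletal filtration of $|\thh(B;S^V)_\bullet|$ the $0$-skeleton is precisely $\thh(B;S^V)_0 = B^{\wedge 1}_{\pre}(V)$, with inclusion realizing $\eta$. The main obstacle I anticipate is bookkeeping the shuffle product and sign conventions carefully enough to conclude that the shuffle of a $0$-chain with a $1$-chain is literally the stated concatenation; once this is in hand, the rest of the argument is formal.
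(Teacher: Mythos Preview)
Your proposal is correct and follows essentially the same argument as the paper: both compute the shuffle product of the $0$-cycle $\alpha^{p-1}$ with the $1$-cycle $1\otimes\alpha$ to obtain $\alpha^{p-1}\otimes\alpha$, and then conclude that the two classes agree modulo Hochschild filtration~$0$, which is the image of $\eta_*$. The paper's version is terser but the logic is identical.
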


\begin{proof}
The shuffle product of the $0$-cycle $\alpha^{p-1}$ and the $1$-cycle
$1 \otimes \alpha$ is the $1$-cycle $\alpha^{p-1} \otimes \alpha$.
Hence the product $\alpha^{p-1} \cdot \sigma\alpha$ and $\alpha^{p-1}
\wedge \alpha$ have the same representative in Hochschild filtration~$1$,
and must be equal modulo the image of Hochschild filtration~$0$.
\end{proof}

The following corollary applies to $B = S$, $MU$ and~$BP$.  (According
to Basterra and Mandell \cite{BM1}, \cite{BM2}, $BP$ is an $E_4$ ring
spectrum.)

\begin{cor} \label{cor:e2even}
Suppose that $B$ is an $E_2$ symmetric ring spectrum with $H_*(B)$
concentrated in even degrees.  Then for $\alpha \in H_q(B)$ we have
$$
\alpha^{p-1} \wedge \alpha = \alpha^{p-1} \cdot \sigma\alpha \,.
$$
\end{cor}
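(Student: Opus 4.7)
The plan is to invoke Lemma~\ref{lem:apm1amodeta} directly and use a parity argument to eliminate the indeterminacy it leaves. By that lemma, under the $E_2$ hypothesis we already have the congruence
\[
\alpha^{p-1} \wedge \alpha \equiv \alpha^{p-1} \cdot \sigma\alpha
\pmod{\im \eta_*}
\]
in $H_*(\THH(B))$. What remains is to promote this congruence to an actual equality.

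First I would fix the degree: for $\alpha \in H_q(B)$ the class $\alpha^{p-1} \wedge \alpha$ lies in $H_{1+pq}(\THH(B))$ by Definition~\ref{dfn:apm1wedgea}, and $\alpha^{p-1} \cdot \sigma\alpha$ lies in the same degree (since $\sigma\alpha \in H_{1+q}(\THH(B))$ and multiplication by $\alpha^{p-1}$ raises degree by $(p-1)q$). Since $H_*(B)$ is concentrated in even degrees, $q$ is even, so $pq$ is even and hence $1+pq$ is odd. The key observation is then that $\eta_* \: H_*(B) \to H_*(\THH(B))$ has image concentrated in even degrees, so $\im \eta_* \cap H_{1+pq}(\THH(B)) = 0$.

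Therefore the difference $\alpha^{p-1}\wedge\alpha - \alpha^{p-1}\cdot\sigma\alpha$, being simultaneously in the image of $\eta_*$ and in an odd degree, must vanish, yielding the desired equality. There is no real obstacle here: the whole argument is a parity bookkeeping, with the only substantive content already packaged in Lemma~\ref{lem:apm1amodeta}. I would just need to be careful that the construction of $\sigma$ and of the shuffle product are indeed the ones making the preceding lemma applicable (both of which were established in the discussion leading up to it), and that the $E_2$ hypothesis on $B$ suffices to make $\THH(B)$ an associative $S$-algebra so that the product $\alpha^{p-1}\cdot\sigma\alpha$ is well defined.
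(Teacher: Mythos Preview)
Your proof is correct and follows essentially the same approach as the paper: invoke Lemma~\ref{lem:apm1amodeta} and observe that the indeterminacy in $\im\eta_*$ vanishes because $H_{1+pq}(B)=0$ for even~$q$. One small phrasing point: rather than asserting ``$q$ is even'', it is cleaner to say that if $q$ is odd then $\alpha=0$ and there is nothing to prove, so one may assume $q$ is even --- this is exactly how the paper puts it.
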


\begin{proof}
There is only something to prove when $q$ is even, in which
case $H_{1+pq}(B) = 0$ by hypothesis, so the indeterminacy is $0$.
\end{proof}

When $B$ is a commutative symmetric ring spectrum, it follows similarly
that $\THH(B)$ is an $E_\infty$ ring spectrum, hence equivalent to
a commutative $S$-algebra.  The multiplication maps $\THH(B)_k \simeq
B^{\wedge (k+1)} \to B$ then combine to an augmentation map $\epsilon \:
\THH(B) \to B$ (not to be confused with the stable map $\epsilon_B$),
such that $\epsilon \eta = \id_B$ and $\epsilon \sigma \simeq *$.
See e.g.~\cite{AR05}*{\textsection 3}.  We can thus use $\epsilon$
to determine the error term in Lemma~\ref{lem:apm1amodeta}.

The commutative product on $B$ makes the associated Lewis--May spectrum
an $E_\infty$ ring spectrum, with structure maps
$$
\xi_p \: E\Sigma_p \ltimes_{\Sigma_p} B^{(p)} \longto B
$$
among others.  Here $E\Sigma_p$ is a free, contractible $\Sigma_p$-space.
Up to homotopy, there is a unique map $\chi \: \T \to E\Sigma_p$ that is
equivariant with respect to the inclusion $C_p \subset \Sigma_p$.
Furthermore, the composite map
$$
\T \ltimes_{C_p} B^{(p)}
\overset{\chi\ltimes\id}\longto
E\Sigma_p \ltimes_{\Sigma_p} B^{(p)}
\overset{\xi_p}\longto B
$$
exhibits the commuting homotopy $\mu^p \simeq \mu^p T$ that is
preferred by the $E_\infty$ structure, where $\mu^p \: B^{(p)} \to B$
is the multiplication map and $T \: B^{(p)} \to B^{(p)}$ is the cyclic
permutation.  Using \cite{LR:A}*{5.5}, we can identify the source here
with the source $\T \ltimes_{C_p} B^{\wedge p}$ of $\omega_p$.

\begin{lemma}
Let $B$ be a commutative symmetric ring spectrum.
The composite map
$$
\T \ltimes_{C_p} B^{\wedge p}
\overset{\omega_p}\longto \sd_p \THH(B)
\overset{D}\longto \THH(B)
\overset{\epsilon}\longto B
$$
is homotopic to the composite map
$$
\T \ltimes_{C_p} B^{\wedge p}
\simeq
\T \ltimes_{C_p} B^{(p)}
\overset{\chi\ltimes\id}\longto
E\Sigma_p \ltimes_{\Sigma_p} B^{(p)}
\overset{\xi_p}\longto B \,.
$$
\end{lemma}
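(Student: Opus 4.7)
The plan is to apply the uniqueness clause of \cite{LMS86}*{II.4.1}---the very result that defined $\omega_p$: a $\T$-equivariant map out of $\T \ltimes_{C_p} X$ into a $\T$-spectrum $Y$ is determined, up to $\T$-equivariant homotopy, by its underlying $C_p$-equivariant restriction along $x \mapsto [1, x]$. Since $B$ carries the trivial $\T$-action, both composites in the statement are automatically $\T$-equivariant, so the problem reduces to comparing their restrictions to the fiber $B^{\wedge p}$ as $C_p$-equivariant maps to $B$, up to $C_p$-equivariant homotopy.

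I would next compute both restrictions. For the first composite, the restriction along $\eta_p$ is $\epsilon \circ D \circ \eta_p$, which by Lemma~\ref{lem:Detap} combined with the identity $\epsilon \eta = \id_B$ is homotopic to $d_0^{p-1} = \mu^p \: B^{\wedge p} \to B$, the iterated multiplication. For the second composite, the restriction is $b \mapsto \xi_p([\chi(1), b])$, which by the very definition of $\xi_p$ as the $p$-th $E_\infty$ structure map on $B$ equals $\mu^p$, after the identification $B^{\wedge p} \simeq B^{(p)}$ of \cite{LR:A}*{5.5}. Strict commutativity of $B$ makes $\mu^p$ strictly invariant under cyclic permutation of its $p$ factors, so $\mu^p$ is $C_p$-equivariant toward the trivial action on $B$, and both restrictions lie in the same $C_p$-equivariant homotopy class.

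The delicate point, and the step I expect to be the main obstacle, is arranging the two homotopies to $\mu^p$ to be $C_p$-equivariant rather than merely non-equivariant. For the second composite this equivariance is built into the construction, via the $C_p$-equivariance of $\chi$ and the $\Sigma_p$-equivariance of $\xi_p$. For the first composite I would use the naturality of the straight-line homotopy in barycentric coordinates from Lemma~\ref{lem:Dehtpy}, together with the $\T$-equivariance of $D$ and the $\T$-invariance of the augmentation $\epsilon$, to induce a homotopy in $\THH(B)$ between $D \circ \eta_p$ and $\eta \circ \mu^p$; checking via the explicit description of the $C_p$-action in Lemma~\ref{lem:Taction} that this homotopy becomes $C_p$-equivariant after post-composition with $\epsilon$ is the one place where strict commutativity of $B$ enters essentially (as opposed to the $E_2$ hypothesis that sufficed in Lemma~\ref{lem:apm1amodeta}). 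Once both $C_p$-equivariant homotopies are in hand, the uniqueness clause of \cite{LMS86}*{II.4.1} will deliver the desired $\T$-equivariant homotopy between the two composites.
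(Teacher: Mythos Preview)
Your approach via the adjunction of \cite{LMS86}*{II.4.1} is different from the paper's. The paper does not reduce to the restriction along $B^{\wedge p}\hookrightarrow \T\ltimes_{C_p}B^{\wedge p}$; instead it replaces $D$ by the simplicial map $e$ using Lemma~\ref{lem:Dehtpy} and then compares the two composites directly on the $C_p$-CW structure of~$\T$: on each $0$-cell (a point of $C_p\subset\T$) both composites give the iterated multiplication or a cyclic permutation thereof, and on each $1$-cell both exhibit the commuting homotopy $\mu^p\simeq\mu^p T$ preferred by the $E_\infty$ structure. The matching of these $1$-cell homotopies is the actual content, and the paper's reason it holds is that the augmentation $\epsilon$ is itself built from the $E_\infty$ structure.

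Your outline has two gaps. First, the assertion that the second composite is ``automatically $\T$-equivariant'' because $B$ carries the trivial action is false as a general principle: a map into a spectrum with trivial $\T$-action is $\T$-equivariant only when it is constant on $\T$-orbits of the source, and the composite $\xi_p\circ(\chi\ltimes\id)$ sends $[t,b]$ to $\xi_p([\chi(t),b])$, which a~priori depends on~$t$. For a strictly commutative $B$ one can arrange $\xi_p$ to factor through the collapse of $E\Sigma_p$, making the composite equal to $[t,b]\mapsto\mu^p(b)$; but this is a step in the argument, not an automatic fact. Second, the ``delicate point'' you correctly flag---upgrading the homotopy $\epsilon D\eta_p\simeq\mu^p$ to a $C_p$-equivariant one---is not actually carried out, and the convex homotopy of Lemma~\ref{lem:Dehtpy} is not $C_p$-equivariant before composing with~$\epsilon$, so verifying that it becomes so after composing with~$\epsilon$ amounts to the paper's $1$-cell comparison. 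A cleaner way to handle your first restriction is to use naturality of $D$ with respect to the simplicial augmentation $\epsilon_\bullet\colon\THH(B)_\bullet\to B$ (constant cyclic object): then $\epsilon\circ D\circ\eta_p = D_B\circ(\sd_p\epsilon)_0 = \mu^p$ on the nose, eliminating the delicate point entirely.
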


\begin{proof}
Using Lemma~\ref{lem:Dehtpy} we may replace $D$ by the simplicial map
$e = |e_\bullet|$.  When restricted to $1 \in C_p \subset \T$, both maps
then agree with the $p$-fold multiplication map $d_0^{p-1} \: B^{\wedge
p} \to B$, taking $\alpha_1 \otimes \dots \otimes \alpha_p$ to $\alpha_1
\cdots \alpha_p$ (in homology).  Similarly, when restricted to $T \in C_p
\subset \T$, both maps agree with the cyclically permuted multiplication
map $d_0^{p-1} t_{p-1} \: B^{\wedge p} \to B$, taking $\alpha_1 \otimes
\dots \otimes \alpha_p$ to $\alpha_p\alpha_1 \cdots \alpha_{p-1}$.
When restricted to the $1$-cell $e_1 \subset \T$, connecting $1$ to $T$,
both maps exhibit the commuting homotopy between these two maps
that is preferred by the given $E_\infty$ structure.
(This is how the $E_\infty$ structure enters into the definition of
$\epsilon$.)
In particular the homotopies agree, up to homotopy relative to the
endpoints.
\end{proof}

The following corollary applies to $B = H\F_p$, $H\Z$, $ku$ and~$\ell$.

\begin{cor}
Let $B$ be a commutative symmetric ring spectrum.  For
$\alpha \in H_q(B)$ we have
$$
\alpha^{p-1} \wedge \alpha = \alpha^{p-1} \cdot \sigma\alpha
	+ \eta_*(Q_1(\alpha))
$$
in $H_*(\THH(B))$, where $Q_1(\alpha) = \xi_{p*}(e_1 \otimes
\alpha^{\otimes p})$ is the image of $\alpha$ under the Dyer--Lashof
operation $Q_1 \: H_q(B) \to H_{1+pq}(B)$.
\end{cor}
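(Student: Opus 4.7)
The plan is to combine Lemma~\ref{lem:apm1amodeta} with the lemma immediately preceding the corollary, using the augmentation $\epsilon \: \THH(B) \to B$ — which is available because $B$ is commutative — to extract the explicit indeterminacy. Since commutativity implies an $E_2$-structure, Lemma~\ref{lem:apm1amodeta} applies and produces an equality
$$
\alpha^{p-1} \wedge \alpha = \alpha^{p-1} \cdot \sigma\alpha + \eta_*(x)
$$
in $H_*(\THH(B))$ for some $x \in H_{1+pq}(B)$; the task reduces to identifying $x$.

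First I would apply $\epsilon_*$ to both sides. The relations $\epsilon \eta \simeq \id_B$ and $\epsilon \sigma \simeq *$, together with the fact that $\epsilon$ is a map of commutative ring spectra (so $\epsilon_*$ is multiplicative), give $\epsilon_* \eta_*(x) = x$ and $\epsilon_*(\alpha^{p-1} \cdot \sigma\alpha) = \alpha^{p-1} \cdot 0 = 0$. Hence $x = \epsilon_*(\alpha^{p-1} \wedge \alpha)$.

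Second, by Definition~\ref{dfn:apm1wedgea} the right-hand side equals $\epsilon_* (D \circ \omega_p)_*(e_1 \otimes \alpha^{\otimes p})$. I would then invoke the previous lemma, which asserts $\epsilon \circ D \circ \omega_p \simeq \xi_p \circ (\chi \ltimes \id)$, to rewrite
$$
x = \xi_{p*} \circ (\chi \ltimes \id)_*(e_1 \otimes \alpha^{\otimes p}) = \xi_{p*}(e_1 \otimes \alpha^{\otimes p}) = Q_1(\alpha),
$$
where the middle equality follows from naturality of $\chi$ at chain level, sending the $C_p$-equivariant $1$-cell of $\T$ fixed in Definition~\ref{dfn:tcw} to its image in the cellular chain complex of $E\Sigma_p$, and the last equality is the definition of $Q_1$ recorded in the statement. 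Substituting this identification back yields the asserted formula.

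The main obstacle, modest as it is, is the chain-level bookkeeping in the last step: verifying that the class named ``$e_1 \otimes \alpha^{\otimes p}$'' on the $\T$-side and on the $E\Sigma_p$-side really do correspond under $\chi_*$. This relies only on the $C_p$-equivariance of $\chi$ (and its uniqueness up to $C_p$-equivariant homotopy) together with the standard operadic definition of $Q_1$ via $\xi_p$, so it presents no substantive difficulty; once it is settled, nothing further is required.
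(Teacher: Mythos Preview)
Your proof is correct and follows essentially the same route as the paper: use Lemma~\ref{lem:apm1amodeta} to write $\alpha^{p-1}\wedge\alpha = \alpha^{p-1}\cdot\sigma\alpha + \eta_*(x)$, apply the augmentation $\epsilon_*$ to isolate $x$, and then invoke the preceding lemma to identify $\epsilon_*(D\omega_p)_*(e_1\otimes\alpha^{\otimes p})$ with $\xi_{p*}(e_1\otimes\alpha^{\otimes p}) = Q_1(\alpha)$. The paper's version is terser (it suppresses the $(\chi\ltimes\id)_*$ step you spell out), but the argument is the same.
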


\begin{proof}
We know that $\alpha^{p-1} \wedge \alpha = (D\omega_p)_*(e_1 \otimes
\alpha^{\otimes p})$ equals $\alpha^{p-1} \cdot \sigma\alpha +
\eta_*(x)$ for some $x \in H_*(B)$.  Applying $\epsilon_*$, we get that
$\epsilon_*(\alpha^{p-1} \wedge \alpha) = (\epsilon D \omega_p)_*(e_1
\otimes \alpha^{\otimes p}) = \xi_{p*}(e_1 \otimes \alpha^{\otimes p})
= Q_1(\alpha)$ equals $\epsilon_*(\alpha^{p-1} \cdot \sigma\alpha +
\eta_*(x)) = \alpha^{p-1} \cdot 0 + x = x$.
\end{proof}

\section{Tate representatives}

To prove Theorem~\ref{thm:homiso}, we use the homological
Tate spectral sequence in Proposition~\ref{prop:homtatess}
to study the continuous homology of Tate constructions.
We only need to consider the $C_p$-equivariant case.  Using
Proposition~\ref{prop:kappaformulas} we deduce 
formulas in Theorem~\ref{thm:omegatformulas} for the effect of
a natural map
$$
\omega^t \: \T/C_p \ltimes (B^{\wedge p})^{tC_p} \to \THH(B)^{tC_p}
\,,
$$
as seen by the eyes of the Tate spectral sequence.  These formulas will
be essential for the calculations in Section~6.

\begin{prop} \label{prop:homtatess}
Let $X$ be a $C_p$-equivariant spectrum.  Assume that $X$ is
bounded below with $H_*(X)$ of finite type.  Then the homological
Tate spectral sequence
$$
\hatE^2_{s,t}(X) = \tH^{-s}(C_p; H_t(X))
\Longrightarrow H^c_{s+t}(X^{tC_p})
$$
converges strongly to the continuous homology of $X^{tC_p}$ as a
complete $\A_*$-comodule.

If, furthermore, $X$ is a $C_p$-equivariant ring spectrum, then this is an
algebra spectral sequence, whose product at the $\hatE^2$-term is given by
the cup product in Tate cohomology and the Pontryagin product on $H_*(X)$.
\end{prop}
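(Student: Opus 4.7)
The plan is to derive both assertions from a Greenlees-style cellular filtration of the Tate construction. Fix a $C_p$-CW model $EC_p$ with finitely many cells per dimension and let $\{EC_p^{(n)}\}$ denote the skeletal filtration. Extending this to a two-sided filtration of $\widetilde{EC_p}$ as in \cite{GM95}, one obtains a Tate tower $\{X^{tC_p}[n]\}_n$ of bounded below spectra of finite type mod $p$ with $X^{tC_p} \simeq \holim_n X^{tC_p}[n]$. The filtration subquotients are, after smashing with $X$, suspensions of $X$ indexed by cell dimensions, so on mod $p$ homology the $E^1$-term of the tower spectral sequence becomes the standard two-sided Tate complex of $\F_p[C_p]$-modules tensored with $H_*(X)$, with $d^1$ encoding the boundary of the resolution and the $C_p$-action on $H_*(X)$. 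Taking $d^1$-homology then gives $\hatE^2_{s,t} = \tH^{-s}(C_p; H_t(X))$, and the tower computes $H^c_{s+t}(X^{tC_p})$ by definition.

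Next, I would establish strong convergence and upgrade the target to a complete $\A_*$-comodule. Since each $X^{tC_p}[n]$ is bounded below of finite type mod $p$, the framework of \cite{LR:A}*{\textsection 2} applies: in each fixed total degree the $\lim^1$ obstructions vanish, and the quotient filtration $\{F^n H^c_*(X^{tC_p})\}_n$ realizes the linear topology on $H^c_*(X^{tC_p})$ from \cite{LR:A}*{2.7}. Naturality of the Tate tower in $X$ ensures that each stage is an $\A_*$-comodule, that the tower maps and the spectral sequence differentials are $\A_*$-comodule maps, and that the limit inherits the promised complete $\A_*$-comodule structure.

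For the algebra claim, I would exploit multiplicativity of the Greenlees filtration: a cellular diagonal on $\widetilde{EC_p}$ yields pairings $\widetilde{EC_p}^{[m]} \wedge \widetilde{EC_p}^{[n]} \to \widetilde{EC_p}^{[m+n]}$ which, together with the $C_p$-equivariant multiplication on $X$ and the evaluation pairing on $F(EC_{p+}, -)$, assemble into a map of towers $X^{tC_p}[m] \wedge X^{tC_p}[n] \to X^{tC_p}[m+n]$. This makes the spectral sequence into an algebra spectral sequence, and comparing the cellular diagonal with the standard diagonal on the Tate resolution used to define cup products in group cohomology identifies the induced product on $\hatE^2$ as the cup product on $\tH^*(C_p; -)$ tensored with the Pontryagin product on $H_*(X)$.

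The step I expect to require the most care is the strong convergence statement. Because the Greenlees filtration is two-sided rather than first-quadrant, one cannot directly invoke a standard convergence criterion; instead one separately analyzes the positive and negative halves, using that $X$ is bounded below to ensure that only finitely many positive-filtration classes survive in each total degree, and using finite type plus coherence in the tower to kill $\lim^1$. Granted this, the remaining identifications---of the $E^2$-term, of the complete $\A_*$-comodule structure, and of the algebra structure in the ring case---follow essentially formally from the naturality and multiplicativity of the Greenlees construction.
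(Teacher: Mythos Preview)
Your sketch is sound and follows the expected line: build the spectral sequence from the Greenlees filtration of $\tET$, identify the $E^1$-term with the Tate complex, handle convergence via the bounded-below finite-type hypotheses, and obtain multiplicativity from a cellular diagonal. This is precisely the framework the paper relies on.

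However, you should know that the paper does not actually prove this proposition in place. Its entire proof reads ``See Propositions~4.15 and~4.17 in \cite{LR:A}.'' The authors defer the argument to their companion paper on the topological Singer construction, where the homological Tate spectral sequence, its strong convergence, the complete $\A_*$-comodule structure on the abutment, and the algebra structure in the ring case are all developed. The Tate tower they use is exactly the one you describe (and which appears later in the present paper, around equation~\eqref{eq:tatetower-n} in the proof of Proposition~\ref{prop:kappaformulas}), built from the Greenlees filtration $\{\tE_n\}_n$ of $\tET$ with $\tE_n/\tE_{n-1} \cong \Sigma^n C_{p+}$. One cosmetic difference: the paper works with the $\T$-model $E\T = S(\infty\C)$ restricted to $C_p$, rather than an abstract $EC_p$, because it later needs compatibility with the residual $\T/C_p$-action; for the bare statement of the proposition your choice is fine.

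So your proposal is correct and essentially reconstructs the content of the cited reference; there is nothing to compare against in the present paper beyond the pointer to \cite{LR:A}.
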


\begin{proof}
See Propositions~4.15 and~4.17 in \cite{LR:A}.
\end{proof}

\begin{remark}
The strong convergence of the spectral sequence refers to the complete
Hausdorff filtration of the abutment $H^c_*(X^{tC_p})$ by the kernels of
the natural homomorphisms induced by the maps $X^{tC_p} \to X^{tC_p}[n]$,
see~\eqref{eq:tatetower-n} below.  We shall refer to this increasing
filtration on $H^c_*(X^{tC_p})$ as the \emph{Tate filtration}.
\end{remark}

When $B$ is a connective symmetric ring spectrum, with $H_*(B)$ of
finite type, then both $X = B^{\wedge p}$ and $X = \THH(B)$ are
$C_p$-spectra that are bounded below with $H_*(X)$ of finite type.
This is clear from the K{\"u}nneth formula and the
B{\"o}kstedt spectral sequence in Lemma~\ref{lem:bss}.

\begin{remark} \label{rem:uitrcycles}
In the case when $X = S^{\wedge p} \cong \THH(S)$ is the $C_p$-equivariant
sphere spectrum, $H_*(X) = \F_p$ is concentrated in degree~$0$, so
$$
\hatE^2_{*,*}(S^{\wedge p}) = \tH^{-*}(C_p; \F_p)
$$
is concentrated on the horizontal axis.  Here $\tH^{-*}(C_p; \F_p) =
P(u, u^{-1})$ for $p=2$ and $\tH^{-*}(C_p; \F_p) = E(u) \otimes P(t,
t^{-1})$ for $p$ odd, with $u \in \tH^1$ and $t \in \tH^2$.  There cannot
be any differentials in this spectral sequence, for bidegree reasons, so
each class $u^i t^r$ is an infinite cycle.  (This formula applies when
$p$ is odd---the reader should always replace $u^i t^r$ by $u^{i+2r}$
when $p=2$.)

By naturality with respect to the unit maps $S^{\wedge p} \to B^{\wedge
p}$ and $\THH(S) \to \THH(B)$ it follows that the classes $u^i t^r$
are infinite cycles, also in the homological Tate spectral sequences
for $X = B^{\wedge p}$ and $X = \THH(B)$.  Hence all of these spectral
sequences exhibit a horizontal periodicity, with each $\hatE^r$-term
being determined by the part on the vertical axis via an isomorphism
$$
\hatE^r_{*,*}(X) \cong \tH^{-*}(C_p; \F_p) \otimes \hatE^r_{0,*}(X) \,.
$$
\end{remark}

\begin{remark}
In the case $X = B^{\wedge p}$ the $C_p$-action on $H_*(X) =
H_*(B)^{\otimes p}$ is given by cyclic permutation of the tensor
factors, so
\begin{align*}
\hatE^2_{*,*}(B^{\wedge p}) &= \tH^{-*}(C_p; H_*(B)^{\otimes p})
\cong \tH^{-*}(C_p; \F_p) \otimes \F_p\{\alpha^{\otimes p}\} \\
&\Longrightarrow H^c_*((B^{\wedge p})^{tC_p}) \,,
\end{align*}
where $\alpha$ ranges over an $\F_p$-basis of $H_*(B)$.  Also in this case
the spectral sequence collapses at the $\hatE^2$-term, and converges to
$$
H^c_*((B^{\wedge p})^{tC_p}) = H^c_*(R_+(B)) \cong R_+(H_*(B)) \,,
$$
see \cite{LR:A}*{5.14}.  Here $R_+(B) = (B^{\wedge p})^{tC_p}$ is
the topological Singer construction on~$B$, and $R_+(H_*(B))$ is the
homological Singer construction on $H_*(B)$, discussed in Definitions~5.8
and~3.7 of \cite{LR:A}, respectively.

The right hand isomorphism is given in Theorem~5.9 of that paper.
Implicit in this isomorphism is the fact that for each $u^i t^r \in
\tH^{-*}(C_p; \F_p)$ and $\alpha \in H_*(B)$ there is a preferred
class in $H^c_*((B^{\wedge p})^{tC_p})$ that is represented in
the Tate spectral sequence by $u^i t^r \otimes \alpha^{\otimes p}$
in $\hatE^2_{*,*}(B^{\wedge p})$.  It is obtained by representing
$\alpha$ by a map $f \: S^q \to H \wedge B$, and taking $u^i t^r \otimes
\alpha^{\otimes p}$ to be in the image of an induced homomorphism
$$
H^c_*((f^p)^{tC_p}) \:
H^c_*((S^{pq})^{tC_p}) \to H^c_*((B^{\wedge p})^{tC_p}) \,.
$$
See the proof of \cite{LR:A}*{5.14} for details.
Hence $u^i t^r \otimes \alpha^{\otimes p}$ is
a well-defined class in $H^c_*((B^{\wedge p})^{tC_p})$, not just
defined modulo the Tate filtration.
\end{remark}

\begin{remark}
In the case $X = \THH(B)$ the $C_p$-action on $H_*(X) = H_*(\THH(B))$
is obtained by restriction from a $\T$-action, hence is algebraically
trivial.  Thus
\begin{align*}
\hatE^2_{*,*}(\THH(B)) &= \tH^{-*}(C_p; H_*(\THH(B)))
\cong \tH^{-*}(C_p; \F_p) \otimes H_*(\THH(B)) \\
&\Longrightarrow H^c_*(\THH(B)^{tC_p}) \,.
\end{align*}
In this case the spectral sequence does not generally collapse at the
$\hatE^2$-term.  For example, the $d^2$-differential satisfies
$$
d^2(u^it^r \otimes \alpha) = u^it^{r+1} \otimes \sigma\alpha
$$
for $p$ odd, and similarly for $p=2$, see \cite{R98}*{3.3}.  When $B$
is an $E_\infty$ ring spectrum, so that $\THH(B)$ is equivalent to a
$\T$-equivariant commutative $S$-algebra, there are many cases where this
spectral sequence collapses at the $\hatE^3$-term, see \cite{BR05}*{1.2}.
For example, this is the case for $B = MU$, as we shall prove
directly in Proposition~\ref{prop:thhmutatess} below.
\end{remark}

By naturality with respect to the $C_p$-equivariant map $D \circ \eta_p
\: B^{\wedge p} \to \THH(B)$, we get a map of homological Tate spectral
sequences
$$
\hatE^2_{*,*}(B^{\wedge p}) \longto \hatE^2_{*,*}(\THH(B))
$$
taking $u^it^r \otimes \alpha^{\otimes p}$ to $u^i t^r \otimes \alpha^p$
(recall Lemma~\ref{lem:Detap}),
which converges to a complete $\A_*$-comodule homomorphism
$$
\eta^t_* \: H^c_*((B^{\wedge p})^{tC_p}) \longto H^c_*(\THH(B)^{tC_p}) \,.
$$
This is useful for determining differentials and $\A_*$-comodule
structure in the target, but there is an even more useful extension of
this homomorphism,
$$
\omega^t_* \: H_*(\T/C_p) \otimes H^c_*((B^{\wedge p})^{tC_p})
\longto H^c_*(\THH(B)^{tC_p}) \,,
$$
which is constructed by taking the full $\T$-action on $\THH(B)$
into account.  We now explain this construction.

Consider the diagram
\begin{equation} \label{eq:basic}
\xymatrix{
B^{\wedge p} \ar[r]^-{i_p} \ar@(u,u)[rr]^{\eta_p} &
	\T \ltimes_{C_p} B^{\wedge p} \ar[r]^-{\omega_p} &
	\sd_p \THH(B) \ar[d]^D_\cong \\
&& \THH(B)
}
\end{equation}
of $C_p$-equivariant spectra, where $i_p$ is induced by the inclusion
$C_p \subset \T$.  Note that $\omega_p$ and $D$ are
$\T$-equivariant maps.
For any closed subgroup $G \subseteq \T$ containing $C_p$, and any
$G$-spectrum $X$, the $C_p$-Tate construction
$$
X^{tC_p} = [\tET \wedge \map(E\T_+, X)]^{C_p}
$$
is naturally a $G/C_p$-spectrum.
Applying the $C_p$-Tate construction
to~\eqref{eq:basic} we can form a commutative diagram:
\begin{equation} \label{eq:bottom}
\xymatrix{
& \T/C_p \ltimes (B^{\wedge p})^{tC_p} \ar[d]^\kappa_\simeq
	\ar@(r,u)@{-}[rrd]^{\omega^t} \\
(B^{\wedge p})^{tC_p} \ar[r]^-{i_p^{tC_p}} \ar@(d,l)[drr]_{\eta^t}
	\ar@(u,l)[ur]^{i'} &
	(\T \ltimes_{C_p} B^{\wedge p})^{tC_p} \ar[r]^-{\omega_p^{tC_p}} &
	\sd_p \THH(B)^{tC_p} \ar[d]^{D^{tC_p}}_\cong &
	\ar@(d,r)[dl] \\
&& \THH(B)^{tC_p}
}
\end{equation}

\begin{dfn}
Let
$$
\eta^t \: (B^{\wedge p})^{tC_p} \to \THH(B)^{tC_p}
$$
be the map $\eta^t = (D \circ \eta_p)^{tC_p}$, let
$$
\kappa \: \T/C_p \ltimes (B^{\wedge p})^{tC_p}
	\to (\T \ltimes_{C_p} B^{\wedge p})^{tC_p}
$$
be the unique $\T/C_p$-equivariant map that extends $i_p^{tC_p}$,
and let
$$
\omega^t \: \T/C_p \ltimes (B^{\wedge p})^{tC_p} \to \THH(B)^{tC_p}
$$
be the $\T/C_p$-equivariant composite $\omega^t = (D \circ
\omega_p)^{tC_p} \circ \kappa$.
\end{dfn}

\begin{lemma}
The map
$$
\kappa \: \T/C_p \ltimes (B^{\wedge p})^{tC_p}
\overset{\simeq}\longto (\T \ltimes_{C_p} B^{\wedge p})^{tC_p}
$$
is an equivalence.
\end{lemma}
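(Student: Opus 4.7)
The plan is to build an explicit equivalence between the two spectra and then invoke the universal property characterizing $\kappa$: since $\kappa$ is defined as the unique $\T/C_p$-equivariant extension of $i_p^{tC_p}$, any such equivariant map restricting to $i_p^{tC_p}$ at the identity coset is automatically $\kappa$. So it suffices to produce any $\T/C_p$-equivariant equivalence $\T/C_p \ltimes (B^{\wedge p})^{tC_p} \longto (\T \ltimes_{C_p} B^{\wedge p})^{tC_p}$ whose restriction to $(B^{\wedge p})^{tC_p}$ agrees with $i_p^{tC_p}$.

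For the construction, choose a continuous section $s \: \T/C_p \longto \T$ of the quotient map, for instance the inverse of the $p$-th root isomorphism $\rho$. Since $\T$ is abelian, the formula $(c, [t]) \mapsto c \cdot s([t])$ defines a $C_p$-equivariant homeomorphism $C_p \times \T/C_p \overset{\cong}\longto \T$, where $C_p$ acts by left translation on itself and trivially on $\T/C_p$. Restricting the $\T$-action on $\T \ltimes_{C_p} B^{\wedge p}$ to $C_p$ and absorbing one factor of $C_p$ into the smash product over $C_p$ gives a $C_p$-equivariant equivalence
$$
\T \ltimes_{C_p} B^{\wedge p} \simeq (\T/C_p)_+ \wedge B^{\wedge p},
$$
in which $C_p$ acts trivially on $(\T/C_p)_+$ and by the original cyclic permutation action on $B^{\wedge p}$. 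Now $(\T/C_p)_+$ is a finite (hence dualizable) $C_p$-trivial spectrum, so it passes through each of the operations $\map(E\T_+, -)$, $\tET \wedge (-)$, and $[-]^{C_p}$ in the definition $X^{tC_p} = [\tET \wedge \map(E\T_+, X)]^{C_p}$, yielding a natural equivalence
$$
\bigl((\T/C_p)_+ \wedge B^{\wedge p}\bigr)^{tC_p} \simeq (\T/C_p)_+ \wedge (B^{\wedge p})^{tC_p} = \T/C_p \ltimes (B^{\wedge p})^{tC_p}.
$$

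It then remains to check that the composite equivalence is $\T/C_p$-equivariant and that it restricts to $i_p^{tC_p}$ at the identity coset. The residual $\T$-action on $\T \ltimes_{C_p} B^{\wedge p}$, expressed via the section $s$, acts on the $\T/C_p$-factor by translation and on the $C_p$-factor by a $C_p$-valued cocycle measuring the failure of $s$ to be a group homomorphism; the $C_p$-factor is quotiented away by $\wedge_{C_p}$ and whatever torsional obstructions remain are killed by the $C_p$-Tate construction, so only the translation $\T/C_p$-action survives, matching the free $\T/C_p$-action on $\T/C_p \ltimes (B^{\wedge p})^{tC_p}$. The composite restricts to $i_p^{tC_p}$ at the identity coset by inspection, and the lemma follows from the uniqueness part of the definition of $\kappa$. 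The main technical obstacle is this last cocycle-tracking step: one must confirm that the section-dependent identifications in the second paragraph remain $\T/C_p$-equivariant after the passage to the Tate construction, a routine but somewhat tedious verification.
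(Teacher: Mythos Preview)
Your approach has a genuine gap at the very first step. You claim to choose a continuous section $s \: \T/C_p \to \T$ of the quotient map and use it to define a $C_p$-equivariant homeomorphism $C_p \times \T/C_p \cong \T$. No such section exists: the quotient $\T \to \T/C_p$ is a nontrivial $p$-fold connected covering of the circle, so it admits no continuous section. (Equivalently, $\T$ is connected while $C_p \times \T/C_p$ has $p$ components, so no homeomorphism between them is possible.) Your suggested candidate $\rho^{-1}$ is a group isomorphism $\T/C_p \to \T$, but it is not a section of the quotient map: the composite $\T/C_p \xrightarrow{\rho^{-1}} \T \to \T/C_p$ sends $[z]$ to $[z^p]$, not to $[z]$. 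Consequently the asserted $C_p$-equivariant identification $\T \ltimes_{C_p} B^{\wedge p} \simeq (\T/C_p)_+ \wedge B^{\wedge p}$ does not follow from your argument, and the rest of the proposal rests on it.

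The paper avoids this obstruction by working with the $C_p$-equivariant cofiber sequence $C_{p+} \to \T_+ \to S^1 \wedge C_{p+}$ rather than attempting a product decomposition of $\T$. Smashing over $C_p$ with $B^{\wedge p}$ and applying $(-)^{tC_p}$ yields a map of cofiber sequences comparing $\T/C_p \ltimes (B^{\wedge p})^{tC_p}$ to $(\T \ltimes_{C_p} B^{\wedge p})^{tC_p}$; the outer maps are the identity and the canonical equivalence $S^1 \wedge (B^{\wedge p})^{tC_p} \simeq (S^1 \wedge B^{\wedge p})^{tC_p}$, so $\kappa$ is an equivalence by the five lemma. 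This sidesteps the nontrivial bundle structure of $\T \to \T/C_p$ entirely, and requires no tracking of cocycles or $\T/C_p$-equivariance.
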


\begin{proof}
The $C_p$-equivariant cofiber sequence
\begin{equation} \label{eq:ipjpcofib}
\xymatrix{
C_{p+} \ar[r]^-{i_p} & \T_+ \ar[r]^-{j_p} & S^1 \wedge C_{p+}
}
\end{equation}
induces a map of cofiber sequences
$$
\xymatrix{
(B^{\wedge p})^{tC_p} \ar[r]^-{i'} \ar[d]_{=} &
\T/C_p \ltimes (B^{\wedge p})^{tC_p} \ar[r] \ar[d]^\kappa &
S^1 \wedge (B^{\wedge p})^{tC_p} \ar[d]^{\simeq} \\
(B^{\wedge p})^{tC_p} \ar[r]^-{i_p^{tC_p}} &
(\T \ltimes_{C_p} B^{\wedge p})^{tC_p} \ar[r] &
(S^1 \wedge B^{\wedge p})^{tC_p} \,,
}
$$
which implies the result.
\end{proof}

\begin{remark}
In the intermediate case of the homological
Tate spectral sequence for $X = \T \ltimes_{C_p} B^{\wedge p}$,
the homology $H_*(X) = H_*(\T \ltimes_{C_p} B^{\wedge p})$ was
given in Lemma~\ref{lem:htcpbp}.  The $C_p$-action extends to a
$\T$-action, hence is algebraically trivial.  Thus in this case
\begin{align*}
\hatE^2_{*,*}(\T \ltimes_{C_p} B^{\wedge p})
&= \tH^{-*}(C_p; H_*(\T \ltimes_{C_p} B^{\wedge p}))
\cong \tH^{-*}(C_p; \F_p) \otimes H_*(\T \ltimes_{C_p} B^{\wedge p}) \\
&\Longrightarrow H^c_*((\T \ltimes_{C_p} B^{\wedge p})^{tC_p}) \,.
\end{align*}
It is not difficult to determine the $d^2$-differentials,
and to prove that this spectral sequence collapses at the
$\hatE^3$-term, but we shall not need this result.
\end{remark}

Recall the $C_p$-CW structure on $\T$ from Definition~\ref{dfn:tcw},
which descends to a CW structure on the orbit space (= quotient group)
$\T/C_p$, with mod~$p$ cellular complex $C_*(\T/C_p) = \F_p\{e_0, e_1\}$
having boundary operator $d(e_1) = 0$.  Hence $H_*(\T/C_p) = \F_p\{e_0,
e_1\}$, with $\deg(e_j) = j$ for $j \in \{0,1\}$.

\begin{prop} \label{prop:kappaformulas}
The map $\kappa$ induces the complete $\A_*$-comodule isomorphism
$$
\kappa_* \: H_*(\T/C_p) \otimes H^c_*((B^{\wedge p})^{tC_p})
\overset{\cong}\longto
H^c_*((\T \ltimes_{C_p} B^{\wedge p})^{tC_p})
$$
given, modulo Tate filtration in the target, by
$$
e_j \otimes u^i t^r \otimes \alpha^{\otimes p}
\longmapsto
u^i t^r \otimes e_j \otimes \alpha^{\otimes p}
$$
for $(i,j) = (0,0)$, $(1,0)$ or~$(0,1)$, while
$$
(e_1 \otimes ut^r - e_0 \otimes t^r) \otimes \alpha^{\otimes p}
\longmapsto
ut^r \otimes e_1 \otimes \alpha^{\otimes p}
$$
in the case $(i,j) = (1,1)$.
\end{prop}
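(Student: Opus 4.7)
The plan is to exploit the $\T/C_p$-equivariance of $\kappa$ by factoring it as the extension of $i_p^{tC_p}$ by the $\T/C_p$-action, and then compute each piece using naturality of the homological Tate spectral sequence together with the cellular description of $H_*(\T \ltimes_{C_p} B^{\wedge p})$ from Lemma~\ref{lem:htcpbp}.

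Since the preceding lemma makes $\kappa$ an equivalence, $\kappa_*$ is already a bijective continuous $\A_*$-comodule homomorphism; it remains to verify the displayed formulas. The K\"unneth theorem identifies the source as $H_*(\T/C_p) \otimes H^c_*((B^{\wedge p})^{tC_p})$. The factorization $\kappa = a_Y \circ (\id \ltimes i_p^{tC_p})$, where $a_Y$ denotes the residual $\T/C_p$-action on $Y = (\T \ltimes_{C_p} B^{\wedge p})^{tC_p}$, gives $\kappa_*(e_j \otimes x) = e_j \cdot (i_p^{tC_p})_*(x)$ in homology. Naturality of the Tate spectral sequence for $i_p$, the formula $\alpha^{\otimes p} \mapsto e_0 \otimes \alpha^{\otimes p}$ from Lemma~\ref{lem:htcpbp}, and the collapse of the source Tate spectral sequence together yield $(i_p^{tC_p})_*(u^i t^r \otimes \alpha^{\otimes p}) \equiv u^i t^r \otimes e_0 \otimes \alpha^{\otimes p}$ modulo lower Tate filtration, which handles the $j = 0$ cases. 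Lemma~\ref{lem:htcpbp} also shows that $[\T]$ acts on $H_*(\T \ltimes_{C_p} B^{\wedge p})$ by $e_0 \otimes \alpha^{\otimes p} \mapsto e_1 \otimes \alpha^{\otimes p}$ and $e_1 \otimes \alpha^{\otimes p} \mapsto 0$; descending this to the Tate spectral sequence gives the formula for $(i, j) = (0, 1)$.

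The main obstacle is the case $(i, j) = (1, 1)$. Here the naive $\hatE^2$-level prediction $u t^r \otimes e_1 \otimes \alpha^{\otimes p}$ at Tate filtration $1 + 2r$ must be supplemented: acting by $e_1$ on a chosen lift of $(i_p^{tC_p})_*(u t^r \otimes \alpha^{\otimes p})$ to $H^c_*(Y)$ picks up a sub-leading correction $t^r \otimes e_0 \otimes \alpha^{\otimes p}$ at the strictly lower filtration $2r$, arising from the interaction of the $\T$-translation action with the Tate filtration lift (essentially a Bockstein-type relation $\beta u = t$ for $p$ odd, with an analogous phenomenon at $p = 2$). To verify this precisely one would carry out a direct cellular computation: using the $C_p$-CW structure on $\T$ from Definition~\ref{dfn:tcw}, pick a chain-level representative in $C_*(\T) \otimes_{C_p} H_*(B)^{\otimes p}$, apply the left-translation action, and track the resulting Tate filtration shift. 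The basis change $(e_1 \otimes u t^r - e_0 \otimes t^r) \otimes \alpha^{\otimes p}$ in the statement absorbs this correction via the $(0, 0)$ formula, yielding the clean image $u t^r \otimes e_1 \otimes \alpha^{\otimes p}$.
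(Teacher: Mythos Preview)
Your high-level strategy --- factor $\kappa$ through $i_p^{tC_p}$ and then act by $\T/C_p$ --- is the right idea, and it matches the paper's setup. But the argument has a real gap in the $(1,1)$ case, and the proposed fix does not address it.

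The issue is where the correction term actually comes from. You propose to ``pick a chain-level representative in $C_*(\T) \otimes_{C_p} H_*(B)^{\otimes p}$, apply the left-translation action, and track the resulting Tate filtration shift.'' But that complex models $H_*(X)$ for $X = \T \ltimes_{C_p} B^{\wedge p}$, not the Tate filtration on $H^c_*(X^{tC_p})$. The filtration shift you are trying to detect is \emph{not} produced by the $\T$-action on $X$; it is produced by the $\T$-action on the spectrum $\tET$ appearing in the Tate construction. The fine Greenlees filtration on $\tET$ by $C_p$-skeleta is not $\T$-equivariant, so the residual $\T/C_p$-action does not preserve the usual Tate filtration and cannot simply be ``descended to the Tate spectral sequence'' as you do for $(0,1)$ either. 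Your Bockstein heuristic is suggestive but does not substitute for an actual model of this interaction.

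The paper resolves this by passing to the coarser filtration of $\tET$ by the even-indexed $\T$-skeleta $\tE_{2n} = S^{n\C}$, which \emph{is} $\T$-equivariant. On the homotopy fiber $\Sigma^{2n-1}\T_+ \wedge_{C_p} X$ of the resulting tower, $\kappa$ is realized concretely as $\Sigma^{2n-1}(\xi \ltimes_{C_p} \id)$, where
\[
\xi \: \T/C_p \times \T \overset{\cong}\longto \T \times_{C_p} \T, \qquad ([z],w) \longmapsto [zw,z].
\]
A cellular computation of $\xi_*$ gives $e_1 \otimes e_0 \mapsto e_0 \otimes e_1 + e_1 \otimes T e_0$, and this extra term is precisely the source of the $t^r \otimes e_0 \otimes \alpha^{\otimes p}$ correction. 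The two copies of $\T$ here are the $\T/C_p$ you are acting by and the $\T$ coming from $\tE_{2n}/\tE_{2n-2} \cong \Sigma^{2n-1}\T_+$ --- so the mixing happens between the group action and the Tate tower, not inside $X$. Your outline could be completed, but only after you set up this coarse equivariant model; the cellular computation you gesture at would then become exactly the computation of $\xi_*$.
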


\begin{remark}
Note that acting by the fundamental class of $\T/C_p$
on classes represented in odd filtration has the effect of increasing
the Tate filtration by $1$, since the image of $e_1 \otimes ut^r
\otimes \alpha^{\otimes p}$ is represented by $t^r \otimes e_0 \otimes
\alpha^{\otimes p}$, modulo a term of lower Tate filtration.
\end{remark}

\begin{proof}
For definiteness, we assume that our models for $E\T$ and $\tET$
are the unit sphere $S(\infty\C)$ and the one-point compactification
$S^{\infty\C}$ of $\infty\C = \C^\infty$ with the diagonal $\T$-action,
respectively.  We fix a based $\T$-CW structure on $\tET$ with
$S^{n\C}$ as $\T$-equivariant $(2n{-}1)$- and $2n$-skeleton, so that
$$
S^{n\C}/S^{(n-1)\C} \cong \Sigma^{2n-1} \T_+ \,.
$$
Restricting the action to $C_p \subset \T$, there is a based $C_p$-CW
structure on $\tET$ with $S^{n\C}$ as $C_p$-equivariant $2n$-skeleton,
so that the subquotient $C_p$-CW structure on $S^{n\C}/S^{(n-1)\C}$
is the $(2n{-}1)$-th suspension of the $C_p$-CW structure on~$\T$ from
Definition~\ref{dfn:tcw}, based at a disjoint base point.  For $n\ge0$
let $\tE_n$ be the $n$-skeleton of this based $C_p$-CW structure on
$\tET$, viewed as a $C_p$-CW spectrum, and recall that the Greenlees
filtration \cite{G87}
$$
\dots \to \tE_{n-1} \to \tE_n \to \dots \to \tET
$$
of $C_p$-spectra extends this notation to all integers~$n$, so that
$\tE_n/\tE_{n-1} \cong \Sigma^n C_{p+}$ and $\tE_{2n} = S^{n\C}$
for all $n \in \Z$.  The cofiber sequence
\begin{equation} \label{eq:e2ncofib}
\tE_{2n-1}/\tE_{2n-2} \to \tE_{2n}/\tE_{2n-2} \to \tE_{2n}/\tE_{2n-1}
\end{equation}
equals the $(2n{-}1)$-th suspension of~\eqref{eq:ipjpcofib}.  If we only
consider the even-indexed spectra, we get a coarser filtration
$$
\dots \to \tE_{2n-2} \to \tE_{2n} \to \dots \to \tET
$$
of $\T$-spectra, so that $\tE_{2n}/\tE_{2n-2} \cong
\Sigma^{2n-1} \T_+$ for all $n$.

The homological Tate spectral sequence in Proposition~\ref{prop:homtatess}
is obtained by expressing the $C_p$-Tate construction as a homotopy limit
$$
X^{tC_p} \simeq \holim_{n\to-\infty} X^{tC_p}[n]
$$
of a tower of spectra, where
\begin{equation} \label{eq:tatetower-n}
X^{tC_p}[n] = [\tET/\tE_{n-1} \wedge \map(E\T_+, X)]^{C_p} \,.
\end{equation}
There are homotopy (co-)fiber sequences
$$
[\tE_n/\tE_{n-1} \wedge \map(E\T_+, X)]^{C_p}
\to X^{tC_p}[n] \to X^{tC_p}[n{+}1]
$$
and equivalences
\begin{align*}
[\tE_n/\tE_{n-1} \wedge \map(E\T_+, X)]^{C_p}
&\simeq
[\tE_n/\tE_{n-1} \wedge X]^{C_p} \\
&\simeq
(\tE_n/\tE_{n-1} \wedge X)/C_p
\cong \Sigma^n X
\end{align*}
for all~$n$,
since $\tE_n/\tE_{n-1}$ is $C_p$-free.
(Thus uses the Adams transfer equivalence $E/C_p \simeq E^{C_p}$
for $C_p$-free spectra $E$, see e.g.~\cite{LMS86}*{\S II.2}.)
The spectral sequence in question is associated to the exact couple
with
$A_{n,*} = H_{n+*}(X^{tC_p}[n])$
and
$$
\hatE^1_{n,*} = H_{n+*}((\tE_n/\tE_{n-1} \wedge X)/C_p) \cong H_*(X) \,.
$$

In our case of interest, $X = \T \ltimes_{C_p} B^{\wedge p}$ is a
$\T$-equivariant spectrum, and we seek to understand the residual
$\T/C_p$-action on $X^{tC_p}$.  The group $\T$ acts (only) on the
even-indexed terms in the Greenlees filtration, hence $\T/C_p$ acts (only)
on the odd-indexed terms in the homotopy limit above.  Restricting to
this coarser tower of spectra, we can express the $C_p$-Tate construction
as the homotopy limit
$$
X^{tC_p} \simeq \holim_{n\to-\infty} X^{tC_p}[2n{-}1]
$$
of $\T/C_p$-equivariant spectra.
There are $\T/C_p$-equivariant homotopy (co-)fiber sequences
\begin{equation} \label{eq:tcpcofib}
[\tE_{2n}/\tE_{2n-2} \wedge \map(E\T_+, X)]^{C_p}
\to X^{tC_p}[2n{-}1] \to X^{tC_p}[2n{+}1]
\end{equation}
and equivalences
\begin{align*}
[\tE_{2n}/\tE_{2n-2} \wedge \map(E\T_+, X)]^{C_p}
&\simeq [\tE_{2n}/\tE_{2n-2} \wedge X]^{C_p} \\
&\simeq (\tE_{2n}/\tE_{2n-2} \wedge X)/{C_p}
\cong \Sigma^{2n-1} \T_+ \wedge_{C_p} X
\end{align*}
for all~$n$, since $\tE_{2n}/\tE_{2n-2}$ is $C_p$-free.

The map $\kappa$ is then realized as the homotopy limit of a tower of
maps
$$
\kappa[2n{-}1] \: \T/C_p \ltimes (B^{\wedge p})^{tC_p}[2n{-}1]
\to (\T \ltimes_{C_p} B^{\wedge p})^{tC_p}[2n{-}1]
$$
induced by the $C_p$-equivariant inclusion $i_p \: B^{\wedge p} \to
\T \ltimes_{C_p} B^{\wedge p}$ and the $\T/C_p$-action on the displayed
target.  Passing to homotopy fibers as in~\eqref{eq:tcpcofib}, we see that
$\kappa$ in Tate filtrations $(2n{-}1)$ and $2n$ is represented by the map
$$
\bar\kappa \:
\T/C_p \ltimes (\Sigma^{2n-1} \T_+ \wedge_{C_p} B^{\wedge p})
\longto
\Sigma^{2n-1} \T_+ \wedge_{C_p} (\T \ltimes_{C_p} B^{\wedge p})
$$
induced by $\id \wedge_{C_p} i_p$ and the $\T/C_p$-action.  More
explicitly, this is the map
$$
\bar\kappa = \Sigma^{2n-1} (\xi \ltimes_{C_p} \id)
$$
induced by the $C_p$-equivariant homeomorphism
$$
\xi \: \T/C_p \times \T \overset{\cong}\longto \T \times_{C_p} \T
$$
that takes $([z], w)$ to $[zw, z]$.
Here $z, w \in \T$ and square brackets indicate $C_p$-orbits.

A cellular approximation to $\xi$ induces the $C_p$-equivariant
chain isomorphism
$$
\xi_* \: C_*(\T/C_p) \otimes C_*(\T)
\overset{\cong}\longto C_*(\T) \otimes_{C_p} C_*(\T)
$$
given by
$$
e_j \otimes e_k \longmapsto e_k \otimes e_j
$$
for $(j,k) = (0,0)$, $(0,1)$ or~$(1,1)$, while
$$
e_1 \otimes e_0 \longmapsto e_0 \otimes e_1 + e_1 \otimes Te_0
$$
in the case $(j,k) = (1,0)$.  This follows by combining the cellular
model $e_0 \mapsto e_0$, $e_1 \mapsto e_0 \otimes e_1 + e_1 \otimes Te_0$
for the diagonal map $z \mapsto (z,z)$ with the cellular model $e_j
\otimes e_k \mapsto e_{j+k}$ for $j+k\le1$, $e_1 \otimes e_1 \mapsto 0$
for the multiplication map $(z, w) \mapsto zw$.

Recall also that we made a choice of a chain equivalence $H_*(B)
\simeq C_*(B)$, inducing a chain equivalence $H_*(B)^{\otimes p} \simeq
C_*(B)^{\otimes p} \simeq C_*(B^{\wedge p})$ that is compatible with
the $C_p$-actions.  Combining these facts, we see that $\bar\kappa$
has a chain level model
$$
\bar\kappa_* \:
C_*(\T/C_p) \otimes \Sigma^{2n-1} C_*(\T) \otimes_{C_p} H_*(B)^{\otimes p}
\overset{\cong}\longto
\Sigma^{2n-1} C_*(\T) \otimes_{C_p} C_*(\T) \otimes_{C_p} H_*(B)^{\otimes p}
$$
given by
\begin{equation} \label{eq:kappabar1}
e_j \otimes \Sigma^{2n-1} e_k \otimes x
\longmapsto
\Sigma^{2n-1} e_k \otimes e_j \otimes x
\end{equation}
for $(j,k) = (0,0)$, $(0,1)$ or~$(1,1)$, and
\begin{equation} \label{eq:kappabar2}
e_1 \otimes \Sigma^{2n-1} e_0 \otimes x
\longmapsto
(\Sigma^{2n-1} e_0 \otimes e_1 + \Sigma^{2n-1} e_1 \otimes Te_0) \otimes x
\end{equation}
in the case $(j,k) = (1,0)$, where $x \in H_*(B)^{\otimes p}$.
Note that, as a consequence of linearity,
$$
e_1 \otimes \Sigma^{2n-1} e_0 \otimes x -
e_0 \otimes \Sigma^{2n-1} e_1 \otimes Tx
\longmapsto
\Sigma^{2n-1} e_0 \otimes e_1 \otimes x \,.
$$

Now consider a class $u^i t^r \otimes \alpha^{\otimes p}$ in
$H^c_*((B^{\wedge p})^{tC_p})$.  Let $n = -r$ and $k = 1-i$.  Then $u^i
t^r \otimes \alpha^{\otimes p}$ lies in Tate filtration $-(i+2r) =
2n-1+k$, and is represented by the homology class of
$$
\Sigma^{2n-1} e_k \otimes \alpha^{\otimes p}
$$
in $H_*(\Sigma^{2n-1} \T_+ \wedge_{C_p} B^{\wedge p})$.
Hence $e_j \otimes u^i t^r \otimes \alpha^{\otimes p}$ is
represented by the class of
$$
e_j \otimes \Sigma^{2n-1} e_k \otimes \alpha^{\otimes p}
$$
in $H_*(\T/C_p \ltimes (\Sigma^{2n-1} \T_+ \wedge_{C_p} B^{\wedge p}))$.

Its image under $\bar\kappa_*$ is given by the
formulas~\eqref{eq:kappabar1} and~\eqref{eq:kappabar2} above, with $x =
\alpha^{\otimes p}$, hence equals the class of
$$
\Sigma^{2n-1} e_k \otimes e_j \otimes \alpha^{\otimes p}
$$
in $H_*(\Sigma^{2n-1} \T_+ \wedge_{C_p} (\T \ltimes_{C_p} B^{\wedge p}))$
for $(j,k) = (0,0)$, $(0,1)$ or~$(1,1)$, and the class of
$$
(\Sigma^{2n-1} e_0 \otimes e_1 + \Sigma^{2n-1} e_1 \otimes e_0)
\otimes \alpha^{\otimes p}
$$
for $(j,k) = (1,0)$, since $T(\alpha^{\otimes p}) = \alpha^{\otimes p}$.
It also follows that
$$
(e_1 \otimes ut^r - e_0 \otimes t^r) \otimes \alpha^{\otimes p}
$$
is represented by
$$
(e_1 \otimes \Sigma^{2n-1} e_0 - e_0 \otimes \Sigma^{2n-1} e_1)
	\otimes \alpha^{\otimes p} \,,
$$
which maps under $\bar\kappa_*$ to $\Sigma^{2n-1} e_0 \otimes e_1 \otimes
\alpha^{\otimes p}$.

To find the Tate representative of $\kappa_*(e_j \otimes u^i t^r \otimes
\alpha^{\otimes p})$ we now use the cofiber sequence~\eqref{eq:e2ncofib}
and the associated cofiber sequence
$$
\Sigma^{2n-1} (\T \ltimes_{C_p} B^{\wedge p})
\to \Sigma^{2n-1} \T_+ \wedge_{C_p} (\T \ltimes_{C_p} B^{\wedge p})
\to \Sigma^{2n} (\T \ltimes_{C_p} B^{\wedge p})
$$
coming from the Tate filtration of the middle term.
It shows that $\Sigma^{2n-1} e_k \otimes e_j \otimes \alpha^{\otimes
p}$ in
$$
H_*(\Sigma^{2n-1} \T_+ \wedge_{C_p} (\T \ltimes_{C_p} B^{\wedge p}))
$$
has Tate representative $u^i t^r \otimes e_j \otimes \alpha^{\otimes p}$,
where $i = 1-k$ and $r = -n$.  Noting that the case $(i,j) = (1,1)$
corresponds to the case $(j,k) = (1,0)$, and chasing the formulas,
we get the asserted result.
\end{proof}

\begin{thm} \label{thm:omegatformulas}
The map
$$
\omega^t \:
\T/C_p \ltimes (B^{\wedge p})^{tC_p} \longto \THH(B)^{tC_p}
$$
induces the complete $\A_*$-comodule
homomorphism
$$
\omega^t_* \:
H_*(\T/C_p) \otimes H^c_*((B^{\wedge p})^{tC_p})
\longto H^c_*(\THH(B)^{tC_p})
$$
given, modulo Tate filtration in the target, by
\begin{align*}
e_0 \otimes u^it^r \otimes \alpha^{\otimes p}
&\longmapsto u^it^r \otimes \alpha^p \\
e_1 \otimes t^r \otimes \alpha^{\otimes p}
&\longmapsto t^r \otimes \alpha^{p-1} \wedge \alpha \\
(e_1 \otimes ut^r - e_0 \otimes t^r) \otimes \alpha^{\otimes p}
&\longmapsto ut^r \otimes \alpha^{p-1} \wedge \alpha
\end{align*}
for $i \in \{0,1\}$, $r \in \Z$.
\end{thm}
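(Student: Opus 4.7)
The plan is to use the definition $\omega^t = (D \circ \omega_p)^{tC_p} \circ \kappa$ to factor $\omega^t_* = ((D \circ \omega_p)^{tC_p})_* \circ \kappa_*$, and then combine Proposition~\ref{prop:kappaformulas} with a direct computation of $(D \circ \omega_p)^{tC_p}_*$ obtained from naturality of the homological Tate spectral sequence.

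The first half of the work is already done: Proposition~\ref{prop:kappaformulas} records the image of each of the three source classes under $\kappa_*$, modulo Tate filtration in $H^c_*((\T \ltimes_{C_p} B^{\wedge p})^{tC_p})$. So the remaining task is to identify the image under $((D \circ \omega_p)^{tC_p})_*$ of each class of the form $u^i t^r \otimes e_j \otimes \alpha^{\otimes p}$ appearing on the right-hand side of those formulas.

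For this second step, I would apply naturality of the Tate spectral sequence of Proposition~\ref{prop:homtatess} to the $C_p$-equivariant map $D \circ \omega_p \: \T \ltimes_{C_p} B^{\wedge p} \to \THH(B)$. The $C_p$-action on $H_*(\T \ltimes_{C_p} B^{\wedge p})$ is algebraically trivial because it is the restriction of a $\T$-action, so the $\hatE^2$-term in the source is $\tH^{-*}(C_p; \F_p) \otimes H_*(\T \ltimes_{C_p} B^{\wedge p})$, with vertical-axis generators $e_0 \otimes \alpha^{\otimes p}$ and $e_1 \otimes \alpha^{\otimes p}$ described by Lemma~\ref{lem:htcpbp}. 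By Lemma~\ref{lem:Detap} the map $D \circ \omega_p$ sends $e_0 \otimes \alpha^{\otimes p}$ to $\alpha^p$, and by Definition~\ref{dfn:apm1wedgea} it sends $e_1 \otimes \alpha^{\otimes p}$ to $\alpha^{p-1} \wedge \alpha$. Since multiplication by the horizontal classes $u^i t^r$ in Tate cohomology is preserved by any map of Tate spectral sequences, the induced homomorphism at $\hatE^2$ sends $u^i t^r \otimes e_j \otimes \alpha^{\otimes p}$ to $u^i t^r \otimes \alpha^p$ for $j=0$ and to $u^i t^r \otimes \alpha^{p-1} \wedge \alpha$ for $j=1$, in each case modulo Tate filtration.

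Substituting these values into the formulas of Proposition~\ref{prop:kappaformulas} produces the three displayed cases: $(i,j) \in \{(0,0),(1,0)\}$ uniformly yields $u^i t^r \otimes \alpha^p$; the case $(i,j) = (0,1)$ yields $t^r \otimes \alpha^{p-1} \wedge \alpha$; and the case $(i,j) = (1,1)$ yields $ut^r \otimes \alpha^{p-1} \wedge \alpha$ after using the asymmetric combination $e_1 \otimes ut^r - e_0 \otimes t^r$ whose $\kappa_*$-image sits in pure Tate filtration. The main technical subtlety I expect is that each equality above is valid only modulo the relevant stage of the Tate filtration, so one must verify that coherent representatives exist in the Tate tower $\{\THH(B)^{tC_p}[n]\}_n$ of~\eqref{eq:tatetower-n}; this is handled by the same filtration-compatibility bookkeeping used in the proof of Proposition~\ref{prop:kappaformulas} and in \cite{LR:A}*{4.15, 4.17}, and is the only nontrivial point in the write-up.
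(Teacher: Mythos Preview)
Your proposal is correct and follows the same approach as the paper: factor $\omega^t_*$ through $\kappa_*$ using Proposition~\ref{prop:kappaformulas}, then apply naturality of the homological Tate spectral sequence to $D\circ\omega_p$, using Lemma~\ref{lem:Detap} for the $e_0$-case and Definition~\ref{dfn:apm1wedgea} for the $e_1$-case. The paper's proof is a one-sentence reference to exactly these ingredients, and your final paragraph about coherent representatives in the Tate tower is already absorbed into the naturality statement of Proposition~\ref{prop:homtatess}, so no extra bookkeeping is actually needed.
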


\begin{proof}
This is clear from Proposition~\ref{prop:kappaformulas},
Definition~\ref{dfn:apm1wedgea}, and naturality of the homological
Tate spectral sequence with respect to the map $D \circ \omega_p \:
\T \ltimes_{C_p} B^{\wedge p} \to \THH(B)$.
\end{proof}

\section{Cyclotomic structure}

Recall from diagram~\eqref{eq:rgammathhb} that we are interested in the
natural map
$$
\hat\Gamma \: [\tET \wedge X]^{C_p} \to
[\tET \wedge \map(E\T_+, X)]^{C_p} = X^{tC_p}
$$
for $X \cong \THH(B)$.  We now study the source of this map.  This allows
us to construct the commutative diagram~\eqref{eq:omegasquare}, which
appears in Theorem~\ref{thm:omegasquare}.

\begin{dfn}
Let $G \subseteq \T$ be a closed subgroup containing $C_p$.
For each good $G$-equivariant prespectrum $Y$, with spectrification
$X = LY$, the geometric fixed point prespectrum $\Phi^{C_p}(Y)$ is
the $G/C_p$-equivariant prespectrum with
$$
\Phi^{C_p}(Y)(V^{C_p}) = Y(V)^{C_p} \,,
$$
and the \emph{geometric fixed point} spectrum $\Phi^{C_p}(X)$ is
its spectrification $\Phi^{C_p}(X) = L\Phi^{C_p}(Y)$.  See
\cite{LMS86}*{II.9.7} and \cite{HM97}*{\textsection 2.1}.
\end{dfn}

There is a natural equivalence of $G/C_p$-spectra
$$
\bar s \: [\tET \wedge X]^{C_p} \overset{\simeq}\longto \Phi^{C_p}(X)
\,,
$$
see \cite{LMS86}*{II.9.8} and the proof of \cite{HM97}*{Lem.~2.1}.  We are
concerned with the cases $G = C_p$, $Y = (B^{\wedge p}_{\pre})^\tau$
and $G = \T$, $Y = \sd_p\thh(B)^\tau$, corresponding to $X = B^{\wedge
p}$ and $X = \sd_p \THH(B)$, respectively.

With notation as in Definition~\ref{dfn:thh}, there are
natural isomorphisms and maps
\begin{align*}
\sd_p &\thh(B; S^V)_k^{C_p}
= \thh(B; S^V)_{p(k+1)-1}^{C_p} \\
&= \big( \hocolim_{\vec\bfn \in I^{p(k+1)}}
\Map(S^{n_0} \wedge \dots \wedge S^{n_{p(k+1)-1}},
B_{n_0} \wedge \dots \wedge B_{n_{p(k+1)-1}} \wedge S^V) \big)^{C_p} \\
&\cong \hocolim_{\vec\bfn \in I^{k+1}}
\Map((S^{n_0} \wedge \dots \wedge S^{n_k})^{\wedge p},
(B_{n_0} \wedge \dots \wedge B_{n_k})^{\wedge p} \wedge S^V)^{C_p} \\
&\to \hocolim_{\vec\bfn \in I^{k+1}}
\Map(S^{n_0} \wedge \dots \wedge S^{n_k},
B_{n_0} \wedge \dots \wedge B_{n_k} \wedge S^{V^{C_p}})
= \thh(B; S^{V^{C_p}})_k
\end{align*}
induced from the identifications  $S^{n_0} \wedge \dots \wedge S^{n_k}
\cong ((S^{n_0} \wedge \dots \wedge S^{n_k})^{\wedge p})^{C_p}$ and
$B_{n_0} \wedge \dots \wedge B_{n_k} \cong ((B_{n_0} \wedge \dots \wedge
B_{n_k})^{\wedge p})^{C_p}$.  These define a natural map of prespectra
$$
r'_k \: \Phi^{C_p}(\sd_p \thh(B)_k) \to \thh(B)_k
$$
for each $k\ge0$, and likewise after the natural good thickening.

\begin{lemma}
Suppose that $B$ is connective.
The spectrum maps
$$
r'_0 \: \Phi^{C_p}(B^{\wedge p}) \overset{\simeq}\longto B^{\wedge 1}
$$
(corresponding to the case $k=0$) and
$$
r' \: \Phi^{C_p}(\sd_p \THH(B)) \overset{\simeq}\longto \THH(B)
$$
(obtained from the cases $k\ge0$ by geometric realization)
are natural equivalences.
\end{lemma}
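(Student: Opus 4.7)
The plan is to establish $r'_0$ first, then reduce $r'$ to level-wise equivalences via the fact that geometric fixed points commute with the geometric realization of good prespectra.

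For $r'_0$, I would identify the spacewise map with restriction along the diagonal inclusions $S^{n_0} \hookrightarrow (S^{n_0})^{\wedge p}$ and $B_{n_0} \hookrightarrow B_{n_0}^{\wedge p}$, yielding the restriction
$$
\Map\bigl((S^{n_0})^{\wedge p}, B_{n_0}^{\wedge p} \wedge S^V\bigr)^{C_p}
\longto \Map\bigl(S^{n_0}, B_{n_0} \wedge S^{V^{C_p}}\bigr)
$$
upon also using $(S^V)^{C_p} = S^{V^{C_p}}$. The key claim is that after passage to the B\"okstedt hocolim over $I^p$ on the left and $I$ on the right, this map becomes a weak equivalence. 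I would deduce this from a $C_p$-equivariant version of B\"okstedt's approximation lemma: the $C_p$-action on $I^p$ is cyclic permutation with fixed subcategory the diagonal $I \hookrightarrow I^p$, and for $\bfn$ on the diagonal the restriction becomes arbitrarily highly connected as $n_0 \to \infty$, because the connectivity and flatness of $B$ ensure that the non-diagonal strata of $B_{n_0}^{\wedge p}$ are highly connected relative to the diagonal copy. Similar reasoning underlies \cite{HM97}*{Prop.~2.4}. Spectrification of the good thickening preserves this equivalence, giving the first assertion.

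For $r'$, in simplicial degree $k$ the map $r'_k$ is formally the same comparison as $r'_0$, but with single factors replaced by the $(k+1)$-fold smash products $S^{n_0} \wedge \cdots \wedge S^{n_k}$ and $B_{n_0} \wedge \cdots \wedge B_{n_k}$; the $C_p$-action cyclically permutes the $p$ blocks of $(k+1)$ factors, as in Lemma~\ref{lem:Taction}. The same B\"okstedt-type analysis shows each $r'_k$ is a weak equivalence after good thickening and spectrification. Since $\Phi^{C_p}$ commutes with the realization of good simplicial prespectra (see \cite{HM97}*{\textsection 2.1}) and geometric realization preserves level-wise equivalences, passing to $|{-}|$ yields that $r'$ is an equivalence.

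The main obstacle is the $C_p$-equivariant B\"okstedt approximation, uniformly in $V$: one must verify that the restriction from $C_p$-equivariant maps on $(S^{n_0})^{\wedge p}$ to ordinary maps on the diagonal is highly connected in the relevant range. This relies critically on $B$ being connective and flat. Without connectivity, the non-diagonal $C_p$-strata of $B_{n_0}^{\wedge p}$ would contribute in low degrees and obstruct the approximation, which is precisely why the hypothesis appears in the statement.
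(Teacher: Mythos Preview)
Your proposal is correct and follows the same route as the paper, which simply cites \cite{HM97}*{Prop.~2.5} and notes that the $k=0$ case is already contained in that proof; the paper's only added detail is that Hesselholt--Madsen show the connectivity of $\Omega^{V^{C_p}-W} r'_0(V^{C_p})$ tends to infinity with $V$ for each fixed $W$, with the connectivity hypothesis on $B$ entering exactly at that point. What you have written is essentially an outline of the argument behind the cited proposition --- the equivariant B\"okstedt approximation in each simplicial degree, followed by realization --- so there is no substantive difference in strategy, only in how much of the HM97 proof is unpacked.
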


\begin{proof}
The second case is proved in \cite{HM97}*{Prop.~2.5}, and the first case
is part of their proof.  In more detail, they prove that the connectivity
of the map
$$
\Omega^{V^{C_p}-W} r'_0(V^{C_p}) \:
\Omega^{V^{C_p}-W} \thh(B; S^V)_{p-1}^{C_p}
\longto
\Omega^{V^{C_p}-W} \thh(B; S^{V^{C_p}})_0
$$
grows to infinity with $V$, for each fixed $W$.  (The connectivity
hypothesis enters at the bottom of page~42 in the cited paper.)
\end{proof}

In the case $G = \T$, let $\rho \: \T \to \T/C_p$ be the $p$-th root
isomorphism of groups, with inverse $\rho^{-1} \: \T/C_p \to \T$ taking
$[z]$ to $z^p$.  The cyclic structures on $\sd_p \thh(B)_\bullet$
and $\thh(B)_\bullet$ induce a $\T/C_p$-equivariant structure
on $\Phi^{C_p}(\sd_p \THH(B))$ and a $\T$-equivariant structure on
$\THH(B)$.  These are compatible, in the sense that the cyclotomic
structure equivalence $r'$ is $\rho^{-1}$-equivariant.
See \cite{HM97}*{Def.~2.2}.

The $C_p$-equivariant map $\eta_p \: B^{\wedge p} \to \sd_p \THH(B)$
is induced from the inclusion of $0$-simplices $B^{\wedge p}_{\pre}
\to \sd_p \thh(B)_\bullet$, hence is compatible with $\bar s$ by
naturality, and with the maps $r'_0$ and $r'$ by the construction
of the latter via geometric realization.  Hence the
outer part of the following diagram commutes.
\begin{equation} \label{eq:top}
\xymatrix{
B^{\wedge 1} \ar[r]^-i &
	\T \ltimes B^{\wedge 1} \ar[r]^-\omega &
	\THH(B) \\
\Phi^{C_p}(B^{\wedge p}) \ar[r] \ar[u]^\simeq_{r'_0} &
	\T/C_p \ltimes \Phi^{C_p}(B^{\wedge p}) \ar[r]
	\ar[u]^\simeq_{\rho^{-1} \ltimes r'_0} &
	\Phi^{C_p}(\sd_p \THH(B)) \ar[u]^\simeq_{r'} \\
[\tET \wedge B^{\wedge p}]^{C_p} \ar[r] \ar[u]^\simeq_{\bar s} &
	\T/C_p \ltimes [\tET \wedge B^{\wedge p}]^{C_p} \ar[r]
	\ar[u]^\simeq_{\id \ltimes \bar s} &
	[\tET \wedge \sd_p \THH(B)]^{C_p} \ar[u]^\simeq_{\bar s}
}
\end{equation}
The right hand horizontal maps are the unique equivariant extensions,
given by the $\T/C_p$-action on $[\tET \wedge \sd_p \THH(B)]^{C_p}$
and $\Phi^{C_p}(\sd_p \THH(B))$, and the $\T$-action on $\THH(B)$,
respectively.
These equivariant extensions are compatible, via the identity $\id \:
\T/C_p \to \T/C_p$ and the isomorphism $\rho^{-1} \: \T/C_p \to \T$,
respectively, in view of the equivariance properties of $\bar s$ and $r'$
discussed above.  Hence the whole diagram commutes.

\begin{thm} \label{thm:omegasquare}
Let $B$ be a connective symmetric ring spectrum.  There is a natural
commutative diagram in the stable homotopy category
$$
\xymatrix{
B \ar[r]^-{i} \ar[d]_{\epsilon_B} \ar@(u,u)[rr]^{\eta}
& \T \ltimes B \ar[r]^-\omega \ar[d]^{\rho \ltimes \epsilon_B}
& \THH(B) \ar[d]^\gamma \\
(B^{\wedge p})^{tC_p} \ar[r]^-{i'} \ar@(d,d)[rr]_{\eta^t}
& \T/C_p \ltimes (B^{\wedge p})^{tC_p} \ar[r]^-{\omega^t}
& \THH(B)^{tC_p}
}
$$
where $\epsilon_B$ is the composite
$$
B \overset{\simeq}\longleftarrow [\tET \wedge B^{\wedge p}]^{C_p}
\overset{\hat\Gamma}\longto (B^{\wedge p})^{tC_p} = R_+(B)
$$
and $\gamma$ is the composite
$$
\THH(B) \overset{\simeq}\longleftarrow [\tET \wedge \sd_p \THH(B)]^{C_p}
\overset{\hat\Gamma}\longto (\sd_p \THH(B))^{tC_p}
\cong \THH(B)^{tC_p} \,.
$$
\end{thm}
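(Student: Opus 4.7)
The plan is to obtain the desired commutative square by stacking diagram~\eqref{eq:top} with the natural transformation $\hat\Gamma \: [\tET \wedge X]^{C_p} \to X^{tC_p}$, and then invoking the construction of $\omega^t$ from diagram~\eqref{eq:bottom}. The top row of the square will be the top row of~\eqref{eq:top}; the bottom row will be obtained by applying $\hat\Gamma$ column-wise to the bottom row of~\eqref{eq:top}, followed by the identification $D^{tC_p} \: (\sd_p \THH(B))^{tC_p} \cong \THH(B)^{tC_p}$ on the right-hand column; and the three resulting vertical composites will be $\epsilon_B$, $\rho \ltimes \epsilon_B$, and $\gamma$.

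By construction the first and third vertical chains are $\epsilon_B = \hat\Gamma \circ (r'_0 \circ \bar s)^{-1}$ and $\gamma = D^{tC_p} \circ \hat\Gamma \circ (r' \circ \bar s)^{-1}$, matching the composites in the theorem statement. The middle column
$$
\T \ltimes B \overset{\simeq}{\longleftarrow}
\T/C_p \ltimes \Phi^{C_p}(B^{\wedge p}) \overset{\simeq}{\longleftarrow}
\T/C_p \ltimes [\tET \wedge B^{\wedge p}]^{C_p}
\overset{\id \ltimes \hat\Gamma}{\longto}
\T/C_p \ltimes (B^{\wedge p})^{tC_p}
$$
provided by~\eqref{eq:top} realizes $\rho \ltimes \epsilon_B$, since the first two arrows are $\rho^{-1} \ltimes r'_0$ and $\id \ltimes \bar s$ and the last is the $\T/C_p$-smash of $\epsilon_B$. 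The left square then commutes by naturality of the canonical inclusion $X \to \T/C_p \ltimes X$ under any map, applied to $\epsilon_B \: B \to (B^{\wedge p})^{tC_p}$ (and the trivial observation that $\rho(1) = [1]$).

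The commutativity of the right square is where the work lies. By the commutativity of~\eqref{eq:top}, the map $\omega$ corresponds, under the bottom-row equivalences, to the $\T/C_p$-equivariant extension $\omega'' \: \T/C_p \ltimes [\tET \wedge B^{\wedge p}]^{C_p} \to [\tET \wedge \sd_p \THH(B)]^{C_p}$ of $[\tET \wedge \eta_p]^{C_p}$; hence $\gamma \circ \omega$ is identified with $D^{tC_p} \circ \hat\Gamma \circ \omega''$. Naturality of $\hat\Gamma$ applied to the $\T$-equivariant map $D \circ \omega_p \: \T \ltimes_{C_p} B^{\wedge p} \to \THH(B)$ then factors this composite as $(D \circ \omega_p)^{tC_p}$ precomposed with some natural comparison map $\T/C_p \ltimes (B^{\wedge p})^{tC_p} \to (\T \ltimes_{C_p} B^{\wedge p})^{tC_p}$. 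The main obstacle I anticipate is identifying this comparison map with the equivalence $\kappa$ introduced in~\eqref{eq:bottom}; this should follow from the characterization of $\kappa$ as the unique $\T/C_p$-equivariant extension of $i_p^{tC_p}$, a property visibly enjoyed by the candidate map produced through~\eqref{eq:top}. Once this identification is in hand, the definition $\omega^t = (D \circ \omega_p)^{tC_p} \circ \kappa$ yields $\gamma \circ \omega \simeq \omega^t \circ (\rho \ltimes \epsilon_B)$, completing the diagram.
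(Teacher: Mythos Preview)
Your proposal is correct and follows essentially the same route as the paper: the paper makes explicit an intermediate diagram (your ``comparison map'' step) by applying $\hat\Gamma$ to the bottom row of~\eqref{eq:top} and observing that the resulting right-hand square is the unique $\T/C_p$-equivariant extension of its outer rectangle, which commutes by naturality of $\hat\Gamma$ with respect to $\eta_p$; combining this with~\eqref{eq:top} and~\eqref{eq:bottom} yields the theorem exactly as you outline. Your anticipated identification of the comparison map with $\kappa$ via the uniqueness of $\T/C_p$-equivariant extensions of $i_p^{tC_p}$ is precisely the argument used.
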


\begin{proof}
By naturality of the map $\hat\Gamma$ with respect to the inclusion
of $0$-simplices $\eta_p \: B^{\wedge p} \to \sd_p \THH(B)$ we get
that the outer part of the following diagram commutes.
\begin{equation} \label{eq:middle}
\xymatrix{
[\tET \wedge B^{\wedge p}]^{C_p} \ar[r] \ar@(u,u)[rr]^{[\id\wedge\eta_p]^{C_p}}
	\ar[d]_{\hat\Gamma} &
\T/C_p \ltimes [\tET \wedge B^{\wedge p}]^{C_p} \ar[r]
	\ar[d]^{\id \ltimes \hat\Gamma} &
[\tET \wedge \sd_p \THH(B)]^{C_p}
	\ar[d]^{\hat\Gamma} \\
(B^{\wedge p})^{tC_p} \ar[r]^-{i'} \ar@(d,d)[rr]_{\eta_p^{tC_p}} &
\T/C_p \ltimes (B^{\wedge p})^{tC_p} \ar[r]^-{\omega_p^{tC_p} \circ \kappa} &
(\sd_p \THH(B))^{tC_p}
}
\end{equation}
The right hand column is $\T/C_p$-equivariant, and the right hand square
is constructed to be the unique $\T/C_p$-equivariant extension of the
outer part.  Hence the whole diagram commutes.

By combining the right hand parts of diagrams~\eqref{eq:top},
\eqref{eq:middle} and~\eqref{eq:bottom},
we get the central part of the commutative diagram below:
$$
\xymatrix{
\T \ltimes B^{\wedge 1} \ar[r]^-\omega
	\ar@/_7pc/[dd]_{\rho \ltimes \epsilon_B} &
\THH(B) \ar@/^7pc/[ddd]^{\gamma} \\
\T/C_p \ltimes [\tET \wedge B^{\wedge p}]^{C_p} \ar[r]
	\ar[u]^\simeq_{\rho^{-1} \ltimes r'_0\bar s}
	\ar[d]^{\id \ltimes \hat\Gamma} &
[\tET \wedge \sd_p \THH(B)]^{C_p}
	\ar[u]^\simeq_{r'\bar s} \ar[d]^{\hat\Gamma} \\
\T/C_p \ltimes (B^{\wedge p})^{tC_p} \ar[r] \ar[dr]_{\omega^t} &
	\sd_p \THH(B)^{tC_p} \ar[d]_\cong^{D^{tC_p}} \\
& \THH(B)^{tC_p}
}
$$
This proves the theorem.
\end{proof}

\section{The comparison map}

In this section we compute the effect of the comparison map $\gamma \:
\THH(B) \to \THH(B)^{tC_p}$ for $B = MU$ and $BP$.  The main results
are Theorems~\ref{thm:gammamu} and~\ref{thm:gammabp}.

Let $H = H\F_p$ be the mod~$p$ Eilenberg--Mac\,Lane spectrum, realized
as a commutative symmetric ring spectrum.  Recall \cite{Mi58} the structure
$$
\A_* = H_*(H) = P(\bar\xi_k \mid k\ge1) \otimes E(\bar\tau_k \mid k\ge0)
$$
of the dual Steenrod algebra, where $|\bar\xi_k| = 2p^k-2$ and
$|\bar\tau_k| = 2p^k-1$.  (This assumes $p$ is odd---we leave the details
for $p=2$ to the reader.)  The Hopf algebra coproduct is given by the
formulas
\begin{align*}
\psi(\bar\xi_k) &= \sum_{i+j=k} \bar\xi_i \otimes \bar\xi_j^{p^i} \\
\psi(\bar\tau_k) &= 1 \otimes \bar\tau_k
	+ \sum_{i+j=k} \bar\tau_i \otimes \bar\xi_j^{p^i} \,,
\end{align*}
where $\bar\xi_0 = 1$.

\subsection{The case of complex cobordism}
Let $MU$ be the complex cobordism spectrum, realized as a commutative
symmetric ring spectrum \cite{Ma77}*{IV.2}.
Recall \cite{Ad74}*{pp.~75--77} the $\A_*$-comodule algebra isomorphism
$$
H_*(MU) \cong P(\bar\xi_k \mid k\ge1) \otimes P(m_\ell \mid \ell \ne p^k-1)
	\,,
$$
where $m_\ell$ is an $\A_*$-comodule primitive of degree~$2\ell$, for each
$\ell\ge1$ not of the form $p^k-1$.  Note that $H_*(MU)$ is concentrated
in even degrees.

\begin{dfn}
For $\ell = p^k-1$, let $m_\ell = \bar\xi_k$, so that
$H_*(MU) \cong P(m_\ell \mid \ell\ge1)$.
\end{dfn}

\begin{lemma} \label{lem:thhmu}
There is an $\A_*$-comodule algebra isomorphism
$$
H_*(\THH(MU)) \cong H_*(MU) \otimes E(\sigma m_\ell \mid \ell\ge1) \,.
$$
The classes $\sigma m_\ell$ are $\A_*$-comodule primitive, for all
$\ell\ge1$.
\end{lemma}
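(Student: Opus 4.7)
The natural tool is the B\"okstedt spectral sequence
$$E^2_{*,*} = HH_*(H_*(MU)) \Longrightarrow H_*(\THH(MU)),$$
which is an algebra spectral sequence because $MU$ is commutative. Since $H_*(MU) = P(m_\ell \mid \ell \geq 1)$ is a polynomial algebra over $\F_p$, the Hochschild--Kostant--Rosenberg computation identifies
$$E^2_{*,*} \cong P(m_\ell \mid \ell \geq 1) \otimes E(\sigma m_\ell \mid \ell \geq 1)$$
as a bigraded $\F_p$-algebra, with each $m_\ell$ in Hochschild filtration~$0$ and each $\sigma m_\ell$ in filtration~$1$, represented by the Hochschild $1$-cycle $1 \otimes m_\ell$.

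To show the spectral sequence collapses at $E^2$, I would argue by filtration. Every multiplicative generator lies in Hochschild filtration $\leq 1$, while the differential $d^r$ for $r \geq 2$ decreases filtration by at least~$2$, so it takes generators into negative filtration and hence zero. Because $d^r$ is a derivation on the algebra $E^r$, this forces every class to be a permanent cycle. There is no multiplicative extension problem, since $E^\infty$ is already free graded-commutative on these generators in the appropriate parity, so we obtain the algebra isomorphism $H_*(\THH(MU)) \cong H_*(MU) \otimes E(\sigma m_\ell \mid \ell \geq 1)$.

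For the $\A_*$-comodule structure, by naturality of $\eta \: MU \to \THH(MU)$ the classes $m_\ell$ carry their original coaction. For the primitivity of $\sigma m_\ell$, I would use that the stable map $\sigma \: S^1 \wedge MU \to \THH(MU)$ discussed before Lemma~\ref{lem:apm1amodeta} induces an $\A_*$-comodule homomorphism $\sigma_* \: H_*(MU) \to H_{*+1}(\THH(MU))$, giving $\nu(\sigma_* m_\ell) = (\id \otimes \sigma_*) \nu(m_\ell)$. When $\ell \neq p^k-1$ the generator $m_\ell$ is chosen to be $\A_*$-primitive and the conclusion is immediate. When $\ell = p^k-1$ with $m_\ell = \bar\xi_k$, the standard coaction formula expresses $\nu(m_\ell) - 1 \otimes m_\ell$ as a sum of terms of the form $\bar\xi_i \otimes z^{p^i}$ with $i \geq 1$. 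Since $MU$ is $E_\infty$, the suspension $\sigma$ extends to a derivation on the commutative algebra $H_*(\THH(MU))$, and in characteristic~$p$ such a derivation annihilates all $p$-th powers; hence $\sigma_*(z^{p^i}) = 0$ for $i \geq 1$, leaving only the term $1 \otimes \sigma m_\ell$.

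The main obstacle is the primitivity computation for $\sigma m_{p^k-1}$, which requires combining the derivation property of $\sigma$ (using the $E_\infty$ structure on $MU$) with the precise form of the $\A_*$-coaction on the Adams generators $m_{p^k-1}$. The remaining steps are formal consequences of HKR, the filtration-based collapse argument, and the existence of $\sigma_*$ as an $\A_*$-comodule lift.
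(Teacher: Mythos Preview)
Your approach is essentially the paper's: B\"okstedt spectral sequence, HKR for the $E^2$-term, collapse because all generators sit in filtration $\le 1$, and primitivity of $\sigma m_\ell$ via the identity $\nu(\sigma\alpha) = (1\otimes\sigma)\nu(\alpha)$ together with the fact that $\sigma$, being a derivation, annihilates $p$-th powers. Your treatment of the primitivity step matches the paper's computation line for line.

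There is one genuine gap, however. Your sentence ``There is no multiplicative extension problem, since $E^\infty$ is already free graded-commutative on these generators in the appropriate parity'' is only an argument for $p$ odd, where graded commutativity forces $(\sigma m_\ell)^2 = 0$ because $\sigma m_\ell$ has odd degree. At $p=2$ graded commutativity gives no such conclusion, and $(\sigma m_\ell)^2$ could a priori be a nonzero element of $H_{4\ell+2}(MU)$ sitting in Hochschild filtration~$0$. The paper closes this gap using Dyer--Lashof operations on $\THH(MU)$ (available since $MU$ is $E_\infty$): at $p=2$ one has $(\sigma m_\ell)^2 = Q^{2\ell+1}(\sigma m_\ell) = \sigma\, Q^{2\ell+1}(m_\ell)$, and $Q^{2\ell+1}(m_\ell)$ lies in $H_{4\ell+1}(MU) = 0$. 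You should add this step rather than asserting the absence of extensions.
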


\begin{proof}
We use the (first quadrant) B{\"o}kstedt spectral sequence
$$
E^2_{*,*} = HH_*(H_*(B)) \Longrightarrow H_*(\THH(B))
$$
arising from the skeleton filtration on $\THH(B)$, i.e., the
filtration induced from the simplicial structure on $\thh(B)$.
See \cite{AR05}*{\textsection 5} for the tools used in this computation,
including the facts that $\sigma$ is a derivation, and that it commutes
with the Dyer--Lashof operations $Q^i$.

The $E^2$-term for $B=MU$ is
$$
E^2_{*,*} = H_*(MU) \otimes E(\sigma m_\ell \mid \ell\ge1) \,.
$$
We have $E^2_{*,*} = E^\infty_{*,*}$, since all algebra generators
lie in filtrations $\le 1$.  There are no algebra extensions, since
$\sigma m_\ell$ is in an odd degree, hence is an exterior class for
$p$ odd by graded commutativity, and for $p=2$ by the Dyer--Lashof
calculation $(\sigma m_\ell)^2 = Q^{2\ell+1}(\sigma m_\ell) = \sigma
Q^{2\ell+1}(m_\ell) = 0$, since $Q^{2\ell+1}(m_\ell) \in H_{4\ell+1}(MU) = 0$.
The $\A_*$-coaction $\nu \: H_*(X) \to \A_* \otimes H_*(X)$ for
$X = \THH(MU)$ is given by $\nu(\sigma m_\ell) = (1 \otimes \sigma) \nu(m_\ell)
= (1 \otimes \sigma)(1 \otimes m_\ell) = 1 \otimes \sigma m_\ell$
for $\ell \ne p^k-1$, while
$$
\nu(\sigma\bar\xi_k) = (1 \otimes \sigma) (\sum_{i+j=k} \bar\xi_i \otimes
	\bar\xi_j^{p^i})
	= \sum_{i+j=k} \bar\xi_i \otimes \sigma(\bar\xi_j^{p^i})
	= 1 \otimes \sigma\bar\xi_k \,,
$$
since $\sigma(\bar\xi_j^{p^i}) = 0$ for $i\ge1$.
\end{proof}

The following was proved by a different method in \cite{BR05}*{6.4}.

\begin{prop} \label{prop:thhmutatess}
The homological Tate spectral sequence
$$
\hatE^2_{*,*}(\THH(MU)) = \tH^{-*}(C_p; H_*(\THH(MU)))
	\Longrightarrow H^c_*(\THH(MU)^{tC_p})
$$
collapses at the $\hatE^3 = \hatE^\infty$-term, with
$$
\hatE^\infty_{*,*} = 
	\tH^{-*}(C_p; \F_p) \otimes P(m_\ell^p \mid \ell\ge1)
	\otimes E(m_\ell^{p-1} \sigma m_\ell \mid \ell\ge1) \,.
$$
\end{prop}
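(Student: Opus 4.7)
The plan is to set up $\hatE^2$, compute the $d^2$-differential, and then show that the resulting $\hatE^3$-term consists of permanent cycles because each algebra generator is hit by a permanent cycle in a simpler spectral sequence.

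First, since the $C_p$-action on $H_*(\THH(MU))$ is algebraically trivial, Lemma~\ref{lem:thhmu} together with the horizontal periodicity of Remark~\ref{rem:uitrcycles} gives
$$
\hatE^2_{*,*} \cong \tH^{-*}(C_p; \F_p) \otimes P(m_\ell \mid \ell\ge1) \otimes E(\sigma m_\ell \mid \ell\ge1).
$$
I would then apply the formula $d^2(u^it^r \otimes \alpha) = u^it^{r+1} \otimes \sigma\alpha$ recalled in the remark preceding Theorem~\ref{thm:omegatformulas}. The classes $u$ and $t^{\pm 1}$ are permanent, each $\sigma m_\ell$ is a $d^2$-cycle since $\sigma^2 = 0$ on $H_*(\THH(MU))$, and $d^2(m_\ell) = t \otimes \sigma m_\ell$. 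As $d^2$ is a derivation in the multiplicative spectral sequence, $d^2(m_\ell^j) = j\,m_\ell^{j-1} t\sigma m_\ell$. Each $\ell$-th tensor factor is then a Koszul complex whose $d^2$-homology is $P(m_\ell^p) \otimes E(m_\ell^{p-1}\sigma m_\ell)$: the polynomial cycles are precisely the $p$-th powers, and the surviving exterior classes are $m_\ell^{pk-1}\sigma m_\ell = m_\ell^{p(k-1)} \cdot m_\ell^{p-1}\sigma m_\ell$ for $k \ge 1$. Tensoring back with $\tH^{-*}(C_p;\F_p)$ yields the stated $\hatE^3$-term.

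To show $\hatE^3 = \hatE^\infty$ I would exhibit each algebra generator as a permanent cycle. The classes $u^it^r$ are permanent by Remark~\ref{rem:uitrcycles}. For the remaining generators, set $\alpha = m_\ell$ and apply Theorem~\ref{thm:omegatformulas}: under $\omega^t_*$ the class $e_0 \otimes \alpha^{\otimes p}$ hits $m_\ell^p$ modulo lower Tate filtration, while $e_1 \otimes \alpha^{\otimes p}$ hits $\alpha^{p-1}\wedge\alpha$. Corollary~\ref{cor:e2even} applies, since $MU$ is an $E_2$ ring spectrum with $H_*(MU)$ concentrated in even degrees, and identifies $\alpha^{p-1}\wedge\alpha = m_\ell^{p-1}\sigma m_\ell$. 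Because the Tate spectral sequence for $MU^{\wedge p}$ collapses at $\hatE^2$, the source classes are permanent cycles, and by naturality under $D \circ \omega_p$ their images $m_\ell^p$ and $m_\ell^{p-1}\sigma m_\ell$ survive to $\hatE^\infty$. Since the spectral sequence is multiplicative (Proposition~\ref{prop:homtatess}) and each $d^r$ with $r\ge3$ is a derivation, vanishing on this generating set forces $d^r \equiv 0$. The main obstacle here is that the representatives produced by $\omega^t_*$ are only well-defined modulo lower Tate filtration; however, surviving to $\hatE^\infty$ depends only on the class modulo lower filtration, so this ambiguity is harmless.
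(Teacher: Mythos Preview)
Your argument is correct and matches the paper's proof essentially step for step: identify $\hatE^2$, compute the $d^2$-differential via $\sigma$ and the K{\"u}nneth/Koszul pattern to obtain the stated $\hatE^3$, and then use Theorem~\ref{thm:omegatformulas} together with Corollary~\ref{cor:e2even} to see that the algebra generators $m_\ell^p$ and $m_\ell^{p-1}\sigma m_\ell$ are infinite cycles. One small point of phrasing: the reason these classes are infinite cycles is not really ``naturality under $D\circ\omega_p$'' between two Tate spectral sequences, but simply that $\omega^t_*$ lands in the abutment $H^c_*(\THH(MU)^{tC_p})$, so the Tate representatives of its images are automatically permanent---which is exactly what your final sentence says.
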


\begin{proof}
The $\hatE^2$-term is
$$
\hatE^2_{*,*} = \tH^{-*}(C_p; \F_p) \otimes P(m_\ell \mid \ell\ge1)
	\otimes E(\sigma m_\ell \mid \ell\ge1)
$$
where $\tH^{-*}(C_p; \F_p) = E(u) \otimes P(t, t^{-1})$ for $p$
odd.  The classes $u^i t^r$ are all infinite cycles, as recalled in
Remark~\ref{rem:uitrcycles}.  The $d^2$-differentials are given by
the formula
$$
d^2(u^i t^r \otimes \alpha) = u^i t^{r+1} \otimes \sigma \alpha \,,
$$
see e.g.~\cite{BR05}*{3.2}.
The homology of $P(m_\ell) \otimes E(\sigma m_\ell)$ with respect to
$\sigma$ is $P(m_\ell^p) \otimes E(m_\ell^{p-1} \sigma m_\ell)$, so
by the K{\"u}nneth formula
$$
\hatE^3_{*,*} = \tH^{-*}(C_p; \F_p) \otimes P(m_\ell^p \mid \ell\ge1)
        \otimes E(m_\ell^{p-1} \sigma m_\ell \mid \ell\ge1) \,.
$$
By Theorem~\ref{thm:omegatformulas} and Corollary~\ref{cor:e2even}
the map
$$
\omega^t \: \T/C_p \ltimes (B^{\wedge p})^{tC_p} \to \THH(B)^{tC_p}
$$
for $B  = MU$ takes the classes $e_0 \otimes 1 \otimes m_\ell^{\otimes
p}$ and $e_1 \otimes 1 \otimes m_\ell^{\otimes p}$ in $H_*(\T/C_p)
\otimes H^c_*((B^{\wedge p})^{tC_p})$ to classes represented by $1 \otimes
m_\ell^p$ and $1 \otimes m_\ell^{p-1} \sigma m_\ell$ in the Tate spectral
sequence, respectively.  Hence the $\hatE^3$-term is generated by infinite
cycles, and there cannot be any further differentials.
\end{proof}

\begin{thm} \label{thm:gammamu}
The map
$$
\gamma \: \THH(MU) \to \THH(MU)^{tC_p}
$$
induces a complete $\A_*$-comodule algebra homomorphism
$$
\gamma_* \: H_*(\THH(MU)) \longto H^c_*(\THH(MU)^{tC_p})
$$
mapping
$$
\sigma m_\ell \longmapsto
	(-1)^\ell t^{(p-1)\ell} \otimes m_\ell^{p-1} \sigma m_\ell
$$
modulo Tate filtration in the target, for each $\ell\ge1$.
\end{thm}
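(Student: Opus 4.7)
The plan is to compute $\gamma_*(\sigma m_\ell)$ by chasing the class $\sigma m_\ell$ around the commutative square of Theorem~\ref{thm:omegasquare}. By construction, $\sigma m_\ell = \omega_*(F_\T \otimes m_\ell)$, where $F_\T \in H_1(\T)$ is the fundamental class supplied by the section $s\: S^1 \wedge MU \to \T \ltimes MU$ of the collapse; this is just the definition of $\sigma$ as $\omega \circ s$. Because $\rho\: \T \to \T/C_p$ is a group isomorphism (not the quotient map), $\rho_*(F_\T) = e_1 \in H_1(\T/C_p)$ equals the fundamental class there, so commutativity of the square reduces the computation to
$$
\gamma_*(\sigma m_\ell) \;=\; \omega^t_*\bigl(e_1 \otimes \epsilon_*(m_\ell)\bigr) \,,
$$
modulo Tate filtration. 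It therefore suffices to identify the lowest-Tate-filtration summand of $\epsilon_*(m_\ell)$ in $R_+(H_*(MU))$ and apply the explicit formula from Theorem~\ref{thm:omegatformulas}.

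I next invoke the formula for the homological Singer construction map from \cite{LR:A}*{\textsection 3.2.1}. For an even-degree class $\alpha \in H_{2n}(MU)$, that formula expresses $\epsilon_*(\alpha)$ as a sum of classes $u^i t^r \otimes \alpha^{\otimes p}$ weighted by binomial coefficients, and its leading summand reads
$$
\epsilon_*(\alpha) \;\equiv\; (-1)^n\, t^{(p-1)n} \otimes \alpha^{\otimes p} \pmod{\text{lower Tate filtration}} \,.
$$
Substituting $\alpha = m_\ell$ (so $n = \ell$) and applying the second displayed formula of Theorem~\ref{thm:omegatformulas} with $r = (p-1)\ell$, I obtain
$$
\omega^t_*\bigl(e_1 \otimes t^{(p-1)\ell} \otimes m_\ell^{\otimes p}\bigr) \;\equiv\; t^{(p-1)\ell} \otimes \bigl(m_\ell^{p-1} \wedge m_\ell\bigr) \pmod{\text{Tate filtration}} \,.
$$
Since $MU$ is an $E_\infty$ (hence $E_2$) ring spectrum and $H_*(MU)$ is concentrated in even degrees, Corollary~\ref{cor:e2even} identifies $m_\ell^{p-1} \wedge m_\ell$ with $m_\ell^{p-1} \cdot \sigma m_\ell$ exactly, producing the asserted leading term. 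The lower-Tate-filtration contributions to $\epsilon_*(m_\ell)$ are transported by $\omega^t_*$ into lower Tate filtration of the target (by the remaining formulas of Theorem~\ref{thm:omegatformulas}), so they disappear modulo the quoted filtration.

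That $\gamma_*$ is a complete $\A_*$-comodule algebra homomorphism is essentially automatic: $\A_*$-linearity and continuity follow from $\gamma$ being a stable map into the homotopy limit defining $\THH(MU)^{tC_p}$ from a tower of bounded-below finite-type spectra, while the algebra structure follows from the $E_\infty$ structure on $MU$, which endows $\THH(MU)^{tC_p}$ with a ring-spectrum structure and makes $\gamma$ a ring map by construction. The main technical obstacle is the sign bookkeeping in the middle paragraph: extracting the coefficient $(-1)^\ell$ as the leading binomial coefficient in the Singer-construction formula of \cite{LR:A}*{\textsection 3.2.1} for an even-degree class requires careful tracking of degree conventions. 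Once that identification is made, the theorem follows immediately from the commutative square of Theorem~\ref{thm:omegasquare} combined with the Tate formula of Theorem~\ref{thm:omegatformulas}.
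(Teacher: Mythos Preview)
Your overall strategy is the same as the paper's: chase $[\T]\otimes m_\ell$ around the square of Theorem~\ref{thm:omegasquare}, and evaluate $\omega^t_*(e_1 \otimes (\epsilon_{MU})_*(m_\ell))$ using Theorem~\ref{thm:omegatformulas} and Corollary~\ref{cor:e2even}. For the generators $m_\ell$ with $\ell \ne p^k-1$ this is complete, since those classes are $\A_*$-comodule primitive and $\epsilon_*(m_\ell) = 1 \otimes m_\ell$ exactly, with no correction terms at all.

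The genuine gap is in the treatment of $m_\ell = \bar\xi_k$. Your key claim, that the extra summands in $\epsilon_*(\bar\xi_k)$ lie in \emph{lower} Tate filtration than the main term $(-1)^\ell t^{(p-1)\ell}\otimes \bar\xi_k{}^{\otimes p}$, is false; the direction is reversed. The formula from \cite{LR:A}*{\textsection 3.2.1} gives
\[
\epsilon_*(\bar\xi_k) \;=\; \sum_{i=0}^k t^{-(p^i-1)} \otimes \bar\xi_{k-i}^{p^i}
\]
in $R_+(H_*(MU))$, and under the identification of \cite{LR:A}*{5.14} the summand indexed by~$i$ lands in Tate filtration $-2(p-1)(p^k-1) + 2(p^{i+1}-p)$. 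Thus the $i=0$ term is the term of \emph{lowest} filtration, while the terms with $i\ge1$ sit strictly above it. Applying Theorem~\ref{thm:omegatformulas} to the $i\ge1$ terms gives Tate representatives $t^r \otimes (\bar\xi_{k-i}^{p^i})^{p-1}\sigma(\bar\xi_{k-i}^{p^i})$, and these representatives do vanish since $\sigma$ is a derivation; but that only tells you the images of those terms drop to some strictly lower filtration in the target, not that they drop below the filtration $-2(p-1)(p^k-1)$ of the main term. So your filtration argument does not close.

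The paper repairs this with one additional idea. One checks (Lemmas~\ref{lem:Pronxi} and~\ref{lem:Pronxip}) the identity
\[
\epsilon_*(\bar\xi_k) \;=\; 1 \otimes \bar\xi_k \;+\; t^{-(p-1)}\cdot \epsilon_*(\bar\xi_{k-1}^p)\,,
\]
which packages all the $i\ge1$ terms as a $\tH^{-*}(C_p;\F_p)$-multiple of $(\epsilon_{MU})_*(\bar\xi_{k-1}^p)$. Now chase $[\T]\otimes \bar\xi_{k-1}^p$ around the same square: since $\omega_*([\T]\otimes \bar\xi_{k-1}^p) = \sigma(\bar\xi_{k-1}^p) = 0$ in $H_*(\THH(MU))$, commutativity forces $\omega^t_*\bigl(e_1 \otimes (\epsilon_{MU})_*(\bar\xi_{k-1}^p)\bigr) = 0$ exactly, not merely modulo filtration. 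Multiplying by $t^{-(p-1)}$ then kills the entire correction, and only the $i=0$ term survives to give the stated Tate representative. In short, the missing step is a second application of the commutative square, to the class $\bar\xi_{k-1}^p$, showing the correction terms map to zero on the nose rather than merely to lower filtration.
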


\begin{proof}
We use the commutative diagram
$$
\xymatrix{
\T \ltimes MU \ar[r]^-\omega \ar[d]_{\rho \ltimes \epsilon_{MU}}
	& \THH(MU) \ar[d]^\gamma \\
\T/C_p \ltimes (MU^{\wedge p})^{tC_p} \ar[r]^-{\omega^t} & \THH(MU)^{tC_p}
}
$$
from Theorem~\ref{thm:omegasquare} for $B = MU$.  Let $[\T] \in H_1(\T)$
be the fundamental class, which maps by $\rho_*$ to the fundamental
class $e_1 \in H_1(\T/C_p)$.  Then $\omega_*([\T] \otimes \alpha)
= \sigma \alpha$ for all $\alpha \in H_*(MU)$, so we can compute
$\gamma_*(\sigma \alpha)$ as the image under $\omega^t_*$ of $e_1 \otimes
(\epsilon_{MU})_*(\alpha)$.  We make separate calculations for the
cases $\alpha = \bar\xi_k$ (with $\ell = p^k-1$) and $\alpha = m_\ell$
(with $\ell \ne p^k-1$).

Recall from \cite{LR:A}*{3.2.1} that the homomorphism
$$
\epsilon_* \: H_*(MU) \to R_+(H_*(MU))
$$
is given by the formula
$$
\epsilon_*(\alpha) = \sum_{r=0}^\infty
	t^{-(p-1)r} \otimes (-1)^r \SP^r_*(\alpha)
$$
in the homological Singer construction $R_+(H_*(MU))$,
where $\SP^r_*$ is the homology operation dual to the $r$-th
Steenrod power.  (The terms involving $(\beta\SP^r)_*(\alpha)$ vanish
in this case, since $H_*(MU)$ is concentrated in even degrees.)

By Lemma~\ref{lem:Pronxi} below, we obtain
\begin{align*}
\epsilon_*(\bar\xi_k) &= \sum_{i=0}^k
	t^{-(p^i-1)} \otimes \bar\xi_{k-i}^{p^i} \\
&= 1 \otimes \bar\xi_k + t^{-(p-1)} \otimes \bar\xi_{k-1}^p
	+ \dots + t^{-(p^k-1)} \otimes 1
\end{align*}
in $R_+(H_*(MU))$, for each $k\ge1$.  To control the terms with $1\le
i\le k$, it will be convenient to compare with the $\epsilon_*$-image
of $\bar\xi_{k-1}^p$.  By Lemma~\ref{lem:Pronxip} below, we obtain
$$
\epsilon_*(\bar\xi_{k-1}^p) = \sum_{i=0}^{k-1}
        t^{-p(p^i-1)} \otimes \bar\xi_{k-1-i}^{p^{i+1}}
= \sum_{i=1}^k t^{-(p^i-p)} \otimes \bar\xi_{k-i}^{p^i} \,.
$$
Hence
\begin{equation} \label{eq:epsxikcorr}
\epsilon_*(\bar\xi_k) = 1 \otimes \bar\xi_k
	+ t^{-(p-1)} \cdot \epsilon_*(\bar\xi_{k-1}^p)
\end{equation}
in $R_+(H_*(MU))$, for each $k\ge1$.  Here the multiplication by
$t^{-(p-1)}$ refers to the $R_+(H_*(S)) = \tH^{-*}(C_p; \F_p)$-module
structure on $R_+(H_*(MU))$, coming from the $S$-module structure
on $MU$.

Next recall the isomorphism $R_+(H_*(MU)) \cong H^c_*((MU^{\wedge
p})^{tC_p})$ of \cite{LR:A}*{5.14}, taking $t^r \otimes \alpha$ to
a preferred class represented by
$$
(-1)^\ell t^{r+(p-1)\ell} \otimes \alpha^{\otimes p}
$$
in the Tate spectral sequence, where $|\alpha| = 2\ell$ is assumed
to be even.  (The coefficient is more complicated when $|\alpha|$
is odd.)  By \cite{LR:A}*{5.12}, the map $\epsilon_{MU} \: MU
\to (MU^{\wedge p})^{tC_p}$ induces the composite of $\epsilon_*
\: H_*(MU) \to R_+(H_*(MU))$ and this isomorphism.  Hence, the
identity~\eqref{eq:epsxikcorr} tells us that the difference
$$
(\epsilon_{MU})_*(\bar\xi_k)
	- t^{-(p-1)} \cdot (\epsilon_{MU})_*(\bar\xi_{k-1}^p)
$$
is the preferred class represented by
$$
t^{(p-1)(p^k-1)} \otimes \bar\xi_k{}^{\otimes p}
$$
in the Tate spectral sequence converging to $H^c_*((MU^{\wedge p})^{tC_p})$.

We now chase the class $[\T] \otimes \bar\xi_{k-1}^p$ in $H_*(\T \ltimes
MU)$ around the commutative square above.  Going to the right,
$$
\omega_*([\T] \otimes \bar\xi_{k-1}^p) = \sigma(\bar\xi_{k-1}^p) = 0
$$
in $H_*(\THH(MU))$, since $\sigma$ is a derivation.  Hence the image
under $\gamma_*$ is also $0$, which implies that
$$
(\omega^t)_*(e_1 \otimes (\epsilon_{MU})_*(\bar\xi_{k-1}^p)) = 0 \,.
$$
It follows by the $\tH^{-*}(C_p; \F_p)$-module structure
that
$$
(\omega^t)_*(e_1 \otimes t^{-(p-1)}
	\cdot (\epsilon_{MU})_*(\bar\xi_{k-1}^p)) = 0 \,.
$$
Finally we chase the class $[\T] \otimes \bar\xi_k$ around the square,
to see that $\gamma_*(\sigma\bar\xi_k)$ equals
$$
(\omega^t)_* (e_1 \otimes (\epsilon_{MU})_*(\bar\xi_k))
= (\omega^t)_*
	(e_1 \otimes t^{(p-1)(p^k-1)} \otimes \bar\xi_k{}^{\otimes p}) \,,
$$
which by Theorem~\ref{thm:omegatformulas} and Corollary~\ref{cor:e2even}
is represented by
$$
t^{(p-1)(p^k-1)} \otimes \bar\xi_k^{p-1} \sigma\bar\xi_k
$$
in the Tate spectral sequence converging to $H^c_*(\THH(MU)^{tC_p})$.
This proves the theorem for $m_\ell = \bar\xi_k$, with $\ell = p^k-1$,
since $(-1)^\ell \equiv +1 \mod p$ in these cases.

The remaining cases, of $m_\ell$ with $\ell \ne p^k-1$, are simpler.
We have
$$
\epsilon_*(m_\ell) = 1 \otimes m_\ell
$$
in the homological Singer construction, since these $m_\ell$'s are
$\A_*$-comodule primitives.  Hence $(\epsilon_{MU})_*(m_\ell)$ is the
preferred class represented by
$$
(-1)^\ell t^{(p-1)\ell} \otimes m_\ell{}^{\otimes p}
$$
in the Tate spectral sequence converging to $H^c_*((MU^{\wedge
p})^{tC_p})$.  Chasing $[\T] \otimes m_\ell$ around the commutative
diagram, we find that $\gamma_*(\sigma m_\ell)$ equals
$$
\omega^t_*(e_1 \otimes (-1)^\ell t^{(p-1)\ell} \otimes m_\ell{}^{\otimes p})
= (-1)^\ell t^{(p-1)\ell} \otimes m_\ell^{p-1} \sigma m_\ell \,.
$$

\end{proof}

\begin{lemma} \label{lem:Pronxi}
$$
(-1)^r \SP^r_*(\bar\xi_k) = \begin{cases}
\bar\xi_{k-i}^{p^i} & \text{if $r = (p^i-1)/(p-1)$,} \\
0 & \text{otherwise.}
\end{cases}
$$
\end{lemma}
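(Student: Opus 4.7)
The plan is to reduce the calculation to Milnor's duality by using that the $\A_*$-coaction on $\A_* = H_*(H)$ is its own coproduct $\psi$, so the action of $\SP^r$ is obtained from the formula $\psi(\bar\xi_k) = \sum_{i+j=k}\bar\xi_i \otimes \bar\xi_j^{p^i}$ by pairing $\SP^r$ with the left tensor factor. Explicitly,
$$\SP^r_*(\bar\xi_k) \;=\; \sum_{i+j=k}\langle \SP^r,\bar\xi_i\rangle\,\bar\xi_j^{p^i},$$
so the entire problem reduces to evaluating the scalars $\langle \SP^r,\bar\xi_i\rangle \in \F_p$.

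To compute these scalars, I would invoke Milnor's duality: the basis $\{P^R\}$ of $\A$ is dual to the monomial basis $\{\xi^R\}$ of $\A_*$, and $\SP^r = P^{(r,0,0,\ldots)}$. Hence $\langle \SP^r,\bar\xi_i\rangle$ is nothing but the coefficient of $\xi_1^r$ when $\bar\xi_i$ is expanded in the $\xi$-monomial basis. Matching degrees ($|\bar\xi_i| = 2p^i-2$ versus $|\xi_1^r| = 2r(p-1)$) forces this coefficient to vanish unless $r = (p^i-1)/(p-1)$. In the remaining case I would compute the coefficient from the antipode recursion
$$\bar\xi_i \;=\; -\xi_i - \sum_{j=1}^{i-1}\bar\xi_j^{p^{i-j}}\,\xi_{i-j},$$
a standard consequence of the antipode axiom on the Hopf algebra $\A_*$. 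Among the summands, only $-\bar\xi_{i-1}^p\,\xi_1$ can possibly contain a pure power of $\xi_1$, since every other summand contains a factor $\xi_{i-j}$ with $i-j \ge 2$. Combined with $c^p = c$ for $c \in \F_p$, induction on $i$ then yields that the coefficient of $\xi_1^{(p^i-1)/(p-1)}$ in $\bar\xi_i$ equals $(-1)^i$.

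Finally, a short parity bookkeeping is needed to match the sign in the statement: for $p$ odd, $r = 1+p+\cdots+p^{i-1}$ is a sum of $i$ odd integers, so $r \equiv i \pmod 2$ and hence $(-1)^r = (-1)^i$, while for $p=2$ there is nothing to check. Substituting back gives $\SP^r_*(\bar\xi_k) = (-1)^r\,\bar\xi_{k-i}^{p^i}$ exactly when $r = (p^i-1)/(p-1)$, and $0$ otherwise, which is the lemma after multiplying by $(-1)^r$. No serious obstacle arises; the argument is a direct calculation once Milnor's duality and the antipode recursion are in hand, with the only mildly delicate point being the parity identification $(-1)^r = (-1)^i$.
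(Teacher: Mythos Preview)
Your proposal is correct and follows essentially the same route as the paper: both reduce to extracting the coefficient of $\xi_1^r$ in $\bar\xi_i$ via Milnor duality applied to the coproduct $\psi(\bar\xi_k)=\sum_i\bar\xi_i\otimes\bar\xi_{k-i}^{p^i}$, and both compute that coefficient to be $(-1)^i$ by induction on the antipode recursion, finishing with the parity check $(-1)^r=(-1)^i$. The only cosmetic difference is that the paper phrases the calculation as working modulo the ideal $J(0)=(\xi_2,\xi_3,\dots,\tau_1,\tau_2,\dots)$ and uses the recursion $0=\sum_{i+j=k}\xi_i^{p^j}\bar\xi_j$, whereas you use the companion recursion $0=\sum_{i+j=k}\bar\xi_i^{p^j}\xi_j$ and argue directly with monomial coefficients; these are equivalent formulations of the same computation.
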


\begin{proof}
The Steenrod power $\SP^r$ is dual to $\xi_1^r$ in the Milnor basis
$(\xi^I \tau^J)$ for $\A_*$, so each term of the form $\xi_1^r \otimes
\alpha''$ in the coaction $\nu(\alpha)$ contributes a term $\alpha''$
in $\SP^r_*(\alpha)$.  From the recursive relation $0 = \sum_{i+j=k}
\xi_i^{p^j} \cdot \bar\xi_j$ for $k\ge1$, we see that $\bar\xi_i \equiv
(-1)^i \xi_1^{(p^i-1)/(p-1)}$ for $i\ge0$, modulo the ideal $J(0)
\subset \A_*$ generated by the $\xi_k$ with $k\ge2$ and the $\tau_k$
with $k\ge1$.  Hence $\nu(\bar\xi_k) = \sum_{i+j=k} \bar\xi_i \otimes
\bar\xi_j^{p^i}$ is congruent to
$$
\sum_{i=0}^k (-1)^i \xi_1^{(p^i-1)/(p-1)} \otimes \bar\xi_{k-i}^{p^i} \,,
$$
and contributes $(-1)^i \bar\xi_{k-i}^{p^i}$ to $\SP^r_*(\bar\xi_k)$
precisely if $r = (p^i-1)/(p-1)$.  In this case $(-1)^i \equiv (-1)^r
\mod p$.
\end{proof}

\begin{lemma} \label{lem:Pronxip}
$$
(-1)^r \SP^r_*(\bar\xi_{k-1}^p) = \begin{cases}
\bar\xi_{k-1-i}^{p^{i+1}} & \text{if $r = p(p^i-1)/(p-1)$,} \\
0 & \text{otherwise.}
\end{cases}
$$
\end{lemma}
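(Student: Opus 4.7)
The plan is to bootstrap from Lemma~\ref{lem:Pronxi}, exploiting the Frobenius in characteristic $p$. Since $\nu \colon \A_* \to \A_* \otimes \A_*$ is an algebra homomorphism, and the $p$-th power distributes over sums mod~$p$, I can compute
\[
\nu(\bar\xi_{k-1}^p) = \nu(\bar\xi_{k-1})^p
= \Bigl(\sum_{i+j=k-1} \bar\xi_i \otimes \bar\xi_j^{p^i}\Bigr)^p
= \sum_{i+j=k-1} \bar\xi_i^p \otimes \bar\xi_j^{p^{i+1}}.
\]
As in the preceding lemma, $\SP^r_*$ reads off the coefficient of $\xi_1^r$ in the left tensor factor of $\nu(-)$ expanded in the Milnor basis, so the task reduces to finding the coefficient of $\xi_1^r$ in each $\bar\xi_i^p$.

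Here I recycle the congruence $\bar\xi_i \equiv (-1)^i \xi_1^{(p^i-1)/(p-1)} \pmod{J(0)}$ from the proof of Lemma~\ref{lem:Pronxi}, where $J(0) \subset \A_*$ is the ideal generated by the $\xi_k$ with $k \ge 2$ and the $\tau_k$ with $k \ge 1$. Raising to the $p$-th power and using that any error term $j \in J(0)$ satisfies $j^p \in J(0)$, I obtain
\[
\bar\xi_i^p \equiv (-1)^{ip}\, \xi_1^{p(p^i-1)/(p-1)} \pmod{J(0)}.
\]
Since no element of $J(0)$ contributes to the coefficient of any pure power of $\xi_1$, this yields $\SP^r_*(\bar\xi_{k-1}^p) = (-1)^{ip}\, \bar\xi_{k-1-i}^{p^{i+1}}$ precisely when $r = p(p^i-1)/(p-1)$ for some $0 \le i \le k-1$, and $0$ otherwise.

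The main obstacle, such as it is, is the sign check $(-1)^r \cdot (-1)^{ip} = 1$. For $p=2$ this is automatic; for $p$ odd, $(-1)^{ip} = (-1)^i$, while the integer $(p^i-1)/(p-1) = 1 + p + \cdots + p^{i-1}$ is a sum of $i$ odd terms, so $r = p(p^i-1)/(p-1) \equiv i \pmod 2$ and hence $r+ip \equiv 2i \equiv 0 \pmod 2$. With this parity verified, the stated formula follows, completing the proof.
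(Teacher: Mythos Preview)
Your proof is correct and follows essentially the same route as the paper: compute $\nu(\bar\xi_{k-1}^p) = \sum_{i+j=k-1} \bar\xi_i^p \otimes \bar\xi_j^{p^{i+1}}$, reduce the left tensor factor modulo $J(0)$ using the congruence from Lemma~\ref{lem:Pronxi}, and read off the $\xi_1^r$-coefficient. You are more explicit about the Frobenius step and the parity check $(-1)^r = (-1)^{ip}$ than the paper, which leaves the sign implicit, but the argument is the same.
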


\begin{proof}
The coaction $\nu(\bar\xi_{k-1}^p) = \sum_{i+j=k-1} \bar\xi_i^p \otimes
\bar\xi_j^{p^{i+1}}$ is congruent to
$$
\sum_{i=0}^{k-1}
	(-1)^i \xi_1^{p(p^i-1)/(p-1)} \otimes \bar\xi_{k-1-i}^{p^{i+1}} \,,
$$
and contributes $(-1)^i \bar\xi_{k-1-i}^{p^{i+1}}$ to
$\SP^r_*(\bar\xi_{k-1}^p)$ precisely if $r = p(p^i-1)/(p-1)$.
\end{proof}

\subsection{The Brown--Peterson case}
Let $BP$ be the $p$-local Brown--Peterson spectrum, realized as an $E_4$
symmetric ring spectrum \cite{BM1}, \cite{BM2}.  We could avoid using
the $E_4$ structure on $BP$ by appealing to the symmetric ring spectrum
map $MU \to BP$ of \cite{BJ02} and naturality, as in \cite{BR05}*{6.4},
but this would make some arguments longer.

Recall \cite{Ra86}*{4.1.12} the $\A_*$-comodule algebra isomorphism
$$
H_*(BP) \cong P(\bar\xi_k \mid k\ge1) \,.
$$
Note that $H_*(BP)$ is concentrated in even degrees.

\begin{lemma} \label{lem:thhbp}
There is an $\A_*$-comodule algebra isomorphism
$$
H_*(\THH(BP)) \cong H_*(BP) \otimes E(\sigma\bar\xi_k \mid k\ge1) \,.
$$
The classes $\sigma\bar\xi_k$ are
$\A_*$-comodule primitive, for all $k\ge1$.
\end{lemma}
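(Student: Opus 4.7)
The plan is to adapt the argument of Lemma~\ref{lem:thhmu} to $BP$ by running the B\"okstedt spectral sequence
$$
E^2_{*,*} = HH_*(H_*(BP)) \Longrightarrow H_*(\THH(BP)).
$$
Since $H_*(BP) \cong P(\bar\xi_k \mid k\ge1)$ is a polynomial algebra, the classical Hochschild-homology calculation for polynomial algebras yields
$$
E^2_{*,*} \cong H_*(BP) \otimes E(\sigma\bar\xi_k \mid k\ge1),
$$
with $\sigma\bar\xi_k$ represented in Hochschild filtration~$1$ by the $1$-cycle $1 \otimes \bar\xi_k$. All multiplicative generators sit in Hochschild filtrations $\le 1$, so there is no room for differentials and $E^2_{*,*} = E^\infty_{*,*}$.

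Next I would verify that there are no nontrivial multiplicative extensions, i.e.\ that $(\sigma\bar\xi_k)^2 = 0$ in $H_*(\THH(BP))$. At odd primes $|\sigma\bar\xi_k| = 2p^k - 1$ is odd, so this is immediate from graded commutativity. At $p=2$, where $|\sigma\bar\xi_k|$ is even, one invokes the Dyer--Lashof operations on $\THH(BP)$ afforded by the $E_4$-ring structure on $BP$ (recalled at the opening of the subsection): one has the identity $(\sigma\bar\xi_k)^2 = Q^{|\sigma\bar\xi_k|}(\sigma\bar\xi_k) = \sigma Q^{|\sigma\bar\xi_k|}(\bar\xi_k)$, which one then identifies as zero. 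Alternatively, one uses the symmetric-ring-spectrum map $MU \to BP$ of \cite{BJ02} and naturality to transfer the $MU$-statement.

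For the $\A_*$-coaction, use that $\sigma \: H_*(BP) \to H_*(\THH(BP))$ is induced by a stable map and therefore commutes with the $\A_*$-coaction, together with the fact that $\sigma$ extends to a derivation on $H_*(\THH(BP))$ satisfying $\sigma(\bar\xi_j^{p^i}) = p^i \bar\xi_j^{p^i-1}\sigma\bar\xi_j \equiv 0 \pmod p$ for $i \ge 1$. Applying $1 \otimes \sigma$ to the Hopf-algebra coproduct $\nu(\bar\xi_k) = \sum_{i+j=k}\bar\xi_i \otimes \bar\xi_j^{p^i}$ then leaves only the $i=0$ term, giving
$$
\nu(\sigma\bar\xi_k) = 1 \otimes \sigma\bar\xi_k,
$$
which establishes primitivity for every $k \ge 1$.

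The main obstacle is the extension check at $p=2$: graded commutativity no longer suffices, so one must genuinely use either the Dyer--Lashof identities afforded by the $E_4$-structure on $BP$, or the naturality argument from the $E_\infty$-case of $MU$. The remaining steps --- the Hochschild-homology calculation, the collapse of the B\"okstedt spectral sequence, and the $\A_*$-coaction identification --- go through as direct analogues of the corresponding steps in Lemma~\ref{lem:thhmu}.
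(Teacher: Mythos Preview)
Your proposal is correct and follows essentially the same approach as the paper: the paper's proof is simply ``This is similar to the $MU$-case, see \cite{AR05}*{5.12}'', and you have faithfully reconstructed that adaptation, including the B{\"o}kstedt spectral sequence collapse, the Dyer--Lashof argument for the $p=2$ extension (with the naturality-from-$MU$ alternative), and the coaction computation via $(1\otimes\sigma)\nu(\bar\xi_k)$. The one loose end is that at $p=2$ you should finish the identification $\sigma Q^{|\sigma\bar\xi_k|}(\bar\xi_k)=0$ by noting (as in the $MU$ proof) that $Q^{|\sigma\bar\xi_k|}(\bar\xi_k)$ lands in an odd degree of $H_*(BP)$, which vanishes.
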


\begin{proof}
This is similar to the $MU$-case, see \cite{AR05}*{5.12}.
\end{proof}

\begin{prop} \label{prop:thhbptatess}
The homological Tate spectral sequence
$$
\hatE^2_{*,*}(\THH(BP)) = \tH^{-*}(C_p; H_*(\THH(BP)))
	\Longrightarrow H^c_*(\THH(BP)^{tC_p})
$$
collapses at the $\hatE^3 = \hatE^\infty$-term, with
$$
\hatE^\infty_{*,*} = 
	\tH^{-*}(C_p; \F_p) \otimes P(\bar\xi_k^p \mid k\ge1)
	\otimes E(\bar\xi_k^{p-1} \sigma\bar\xi_k \mid k\ge1) \,.
$$
\end{prop}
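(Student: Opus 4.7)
The plan is to follow almost verbatim the template established for $MU$ in Proposition~\ref{prop:thhmutatess}, since both $H_*(BP)$ and $H_*(MU)$ are polynomial algebras in even degrees and Lemma~\ref{lem:thhbp} gives $\THH(BP)$-homology of exactly the same shape. Concretely, I would begin by writing the $\hatE^2$-term using Lemma~\ref{lem:thhbp}:
$$
\hatE^2_{*,*} = \tH^{-*}(C_p; \F_p) \otimes P(\bar\xi_k \mid k\ge1)
	\otimes E(\sigma\bar\xi_k \mid k\ge1).
$$
By Remark~\ref{rem:uitrcycles}, naturality along the unit $S \to BP$ shows that the classes $u^i t^r$ are infinite cycles. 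The $d^2$-differential is governed by the standard formula $d^2(u^i t^r \otimes \alpha) = u^i t^{r+1} \otimes \sigma\alpha$ (cf.\ \cite{BR05}*{3.2}), which on the multiplicative generators means $d^2(\bar\xi_k) = t \otimes \sigma\bar\xi_k$ and $d^2(\sigma\bar\xi_k)=0$, extended as a derivation.

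Next I would carry out the Koszul-style homology calculation: the homology of the differential graded algebra $(P(\bar\xi_k) \otimes E(\sigma\bar\xi_k), \sigma)$ is $P(\bar\xi_k^p) \otimes E(\bar\xi_k^{p-1}\sigma\bar\xi_k)$. Tensoring with the horizontal factor via the K\"unneth formula then gives
$$
\hatE^3_{*,*} = \tH^{-*}(C_p; \F_p) \otimes P(\bar\xi_k^p \mid k\ge1)
	\otimes E(\bar\xi_k^{p-1} \sigma\bar\xi_k \mid k\ge1),
$$
which is precisely the asserted $\hatE^\infty$-term.

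Finally, to show there are no further differentials, I would exhibit multiplicative generators as permanent cycles. By \cite{BM1,BM2}, $BP$ is $E_4$, hence $E_2$, and $H_*(BP)$ is concentrated in even degrees, so Corollary~\ref{cor:e2even} applies and gives $\bar\xi_k^{p-1} \wedge \bar\xi_k = \bar\xi_k^{p-1}\cdot\sigma\bar\xi_k$ in $H_*(\THH(BP))$. Invoking Theorem~\ref{thm:omegatformulas} for $B = BP$, the map $\omega^t$ carries $e_0 \otimes 1 \otimes \bar\xi_k^{\otimes p}$ and $e_1 \otimes 1 \otimes \bar\xi_k^{\otimes p}$ to classes in $H^c_*(\THH(BP)^{tC_p})$ represented respectively by $1 \otimes \bar\xi_k^p$ and $1 \otimes \bar\xi_k^{p-1}\sigma\bar\xi_k$ on the $\hatE^3$-page. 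Since these classes, together with the $u^i t^r$, generate $\hatE^3_{*,*}$ as an algebra, and since the Tate spectral sequence is multiplicative by Proposition~\ref{prop:homtatess}, no nonzero differential can emanate from $\hatE^3$, proving the collapse.

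The main (minor) obstacle is simply to be sure that Corollary~\ref{cor:e2even} is applicable to $BP$ and that the Tate representatives produced by $\omega^t$ really do land in the claimed positions on the $\hatE^3$-page; both points are handled by the preceding sections, so the argument is essentially a transcription of the $MU$ proof with $m_\ell \mapsto \bar\xi_k$.
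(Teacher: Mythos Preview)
Your proposal is correct and follows exactly the approach the paper takes: the paper's proof simply says ``This is similar to the $MU$-case.~See also \cite{BR05}*{6.4},'' and you have faithfully transcribed the $MU$ argument from Proposition~\ref{prop:thhmutatess} with $m_\ell$ replaced by $\bar\xi_k$.
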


\begin{proof}
This is similar to the $MU$-case.  See also \cite{BR05}*{6.4}.
\end{proof}

\begin{thm} \label{thm:gammabp}
The map
$$
\gamma \: \THH(BP) \to \THH(BP)^{tC_p}
$$
induces a complete $\A_*$-comodule algebra homomorphism
$$
\gamma_* \: H_*(\THH(BP)) \longto H^c_*(\THH(BP)^{tC_p})
$$
mapping
$$
\sigma\bar\xi_k \longmapsto
	t^{(p-1)(p^k-1)} \otimes \bar\xi_k^{p-1} \sigma\bar\xi_k
$$
modulo Tate filtration in the target, for each $k\ge1$.
\end{thm}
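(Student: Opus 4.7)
The plan is to mimic the proof of Theorem~\ref{thm:gammamu}, restricted to the generators $\bar\xi_k$ (which are the only algebra generators of $H_*(BP)$, so there is no analogue of the auxiliary generators $m_\ell$ with $\ell \neq p^k-1$ to handle). Specifically, I would use the commutative square of Theorem~\ref{thm:omegasquare} for $B = BP$:
$$
\xymatrix{
\T \ltimes BP \ar[r]^-\omega \ar[d]_{\rho \ltimes \epsilon_{BP}}
& \THH(BP) \ar[d]^\gamma \\
\T/C_p \ltimes (BP^{\wedge p})^{tC_p} \ar[r]^-{\omega^t} & \THH(BP)^{tC_p}
}
$$
and chase the class $[\T] \otimes \bar\xi_k$, where $[\T] \in H_1(\T)$ is the fundamental class. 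Going right then down gives $\gamma_*(\sigma\bar\xi_k)$ (since $\omega_*([\T] \otimes \alpha) = \sigma\alpha$), while going down then right gives $\omega^t_*(e_1 \otimes (\epsilon_{BP})_*(\bar\xi_k))$.

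Next, I would compute $(\epsilon_{BP})_*(\bar\xi_k)$. Since $H_*(BP)$ is concentrated in even degrees, the Bockstein terms in the formula from \cite{LR:A}*{3.2.1} drop out, and $\epsilon_*(\bar\xi_k) = \sum_r t^{-(p-1)r} \otimes (-1)^r \SP^r_*(\bar\xi_k)$ in $R_+(H_*(BP))$. Lemmas~\ref{lem:Pronxi} and~\ref{lem:Pronxip} apply verbatim (they are purely statements about the coaction in $\A_*$, independent of the ambient comodule), yielding
$$
\epsilon_*(\bar\xi_k) = 1 \otimes \bar\xi_k + t^{-(p-1)} \cdot \epsilon_*(\bar\xi_{k-1}^p)
$$
exactly as in~\eqref{eq:epsxikcorr}. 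Under the isomorphism $R_+(H_*(BP)) \cong H^c_*((BP^{\wedge p})^{tC_p})$ of \cite{LR:A}*{5.14}, the class $(\epsilon_{BP})_*(\bar\xi_k) - t^{-(p-1)} \cdot (\epsilon_{BP})_*(\bar\xi_{k-1}^p)$ is represented by $t^{(p-1)(p^k-1)} \otimes \bar\xi_k^{\otimes p}$ in the Tate spectral sequence (the sign $(-1)^{p^k-1}$ is $+1$ mod $p$ in all cases).

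Then I would eliminate the $\bar\xi_{k-1}^p$ contribution by observing that $\sigma$ is a derivation, so $\sigma(\bar\xi_{k-1}^p) = p \bar\xi_{k-1}^{p-1} \sigma\bar\xi_{k-1} = 0$ in $H_*(\THH(BP))$. By the commutative square applied to $[\T] \otimes \bar\xi_{k-1}^p$, this forces $\omega^t_*(e_1 \otimes (\epsilon_{BP})_*(\bar\xi_{k-1}^p)) = 0$, and the $\tH^{-*}(C_p;\F_p)$-module structure propagates this to $\omega^t_*(e_1 \otimes t^{-(p-1)} \cdot (\epsilon_{BP})_*(\bar\xi_{k-1}^p)) = 0$. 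Consequently,
$$
\gamma_*(\sigma\bar\xi_k) = \omega^t_*(e_1 \otimes t^{(p-1)(p^k-1)} \otimes \bar\xi_k^{\otimes p}),
$$
which by Theorem~\ref{thm:omegatformulas} together with Corollary~\ref{cor:e2even} (applicable because $BP$ is $E_4$ hence $E_2$, and $H_*(BP)$ is even) is represented by $t^{(p-1)(p^k-1)} \otimes \bar\xi_k^{p-1} \sigma\bar\xi_k$ in the Tate spectral sequence, modulo lower Tate filtration.

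There is no serious obstacle here; the argument is a direct specialization of the $MU$ case, and in fact slightly simpler since every algebra generator of $H_*(BP)$ is of the form $\bar\xi_k$. The only point worth verifying carefully is that Corollary~\ref{cor:e2even} does apply to $BP$, which it does via the $E_4$ structure of \cite{BM1}, \cite{BM2}; alternatively one could invoke the symmetric ring spectrum map $MU \to BP$ of \cite{BJ02} and naturality of $\omega_p$, $\gamma$, and the Tate spectral sequence, as is done in \cite{BR05}*{6.4}, to reduce directly to Theorem~\ref{thm:gammamu}.
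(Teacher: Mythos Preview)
Your proposal is correct and follows exactly the approach the paper indicates: the paper's proof simply says ``This is similar to the $MU$-case, using the commutative diagram from Theorem~\ref{thm:omegasquare}, or naturality with respect to the symmetric ring spectrum map $MU \to BP$,'' and you have spelled out both of these options in detail. Your write-up is in fact more explicit than the paper's own proof, but the content is the same specialization of the $MU$ argument to the generators $\bar\xi_k$.
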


\begin{proof}
This is similar to the $MU$-case, using
the commutative diagram
$$
\xymatrix{
\T \ltimes BP \ar[r]^-\omega \ar[d]_{\rho \ltimes \epsilon_{BP}}
	& \THH(BP) \ar[d]^\gamma \\
\T/C_p \ltimes (BP^{\wedge p})^{tC_p} \ar[r]^-{\omega^t} & \THH(BP)^{tC_p}
}
$$
from Theorem~\ref{thm:omegasquare}, or naturality with respect to the
symmetric ring spectrum map $MU \to BP$.
\end{proof}

\section{The Segal conjecture}
\label{sec:segalconj}

In this section we prove Theorem~\ref{thm:homiso}, which in turn
implies Theorem~\ref{thm:segalthhmubp}.  The main technical results are
Propositions~\ref{prop:fgmu} and~\ref{prop:fgbp}.

\begin{remark}
The basic idea, in the case $B = MU$, is that
the homological Tate spectral sequences
$$
{}'\hatE^2_{*,*} = \tH^{-*}(C_p; H_*(\THH(MU)))
	\Longrightarrow H^c_*(\THH(MU)^{tC_p})
$$
and
$$
{}''\hatE^2_{*,*} = \tH^{-*}(C_p; H_*(\THH(MU)^{\wedge p}))
	\Longrightarrow R_+(H_*(\THH(MU)))
$$
have $\hatE^\infty$-terms
$$
{}'\hatE^\infty_{*,*} = E(u) \otimes P(t, t^{-1})
	\otimes P(m_\ell^p \mid \ell\ge1)
	\otimes E(m_\ell^{p-1} \sigma m_\ell \mid \ell\ge1)
$$
and
$$
{}''\hatE^\infty_{*,*} = E(u) \otimes P(t, t^{-1})
	\otimes P(m_\ell{}^{\otimes p} \mid \ell\ge1)
	\otimes E(\sigma m_\ell{}^{\otimes p} \mid \ell\ge1) \,,
$$
that are isomorphic by way of a filtration-shifting isomorphism
given by
$$
m_\ell^p \mapsto m_\ell{}^{\otimes p}
\qquad\text{and}\qquad
m_\ell^{p-1} \sigma m_\ell \mapsto t^m \otimes \sigma m_\ell{}^{\otimes p}
\,,
$$
where $m = (p-1)/2$.
The difficulty is to promote this isomorphism to an $\A_*$-comodule
isomorphism of the abutments, respecting the linear topologies.
Our approach is to compare the two Tate towers via a third tower, given
by base changing the Tate tower for $MU^{\wedge p}$ along $\eta \:
MU \to \THH(MU)$.  See diagram~\eqref{eq:pyramid}.
\end{remark}

Consider any connective symmetric ring spectrum $B$.
We have a commutative diagram
\begin{equation} \label{eq:epsvsgamma}
\xymatrix{
\THH(B) \ar[d]_{\epsilon_{\THH(B)}} &
	B \ar[r]^-\eta \ar[l]_-\eta \ar[d]_{\epsilon_B} &
	\THH(B) \ar[d]^\gamma \\
R_+(\THH(B)) \ar[d] &
	R_+(B) \ar[r]^-{\eta^t} \ar[l]_-{R_+(\eta)} \ar[d] &
	\THH(B)^{tC_p} \ar[d] \\
(\THH(B)^{\wedge p})^{tC_p}[n] &
	(B^{\wedge p})^{tC_p}[n] \ar[l] \ar[r] &
	\THH(B)^{tC_p}[n]
}
\end{equation}
by Theorem~\ref{thm:omegasquare} and naturality of $\epsilon$.  We recall
that $R_+(B) = (B^{\wedge p})^{tC_p}$, while $(B^{\wedge p})^{tC_p}[n]$
denotes the $n$-th term in the Tate tower \eqref{eq:tatetower-n}, and
similarly with $\THH(B)$ in place of~$B$.  We are principally
concerned with the limit behavior as $n \to -\infty$.

Passing to continuous homology, we get a commutative diagram
\begin{equation} \label{eq:epsvsgammahomo}
\xymatrix{
H_*(\THH(B)) \ar[d]_{\epsilon_*} &
	H_*(B) \ar[r]^-{\eta_*} \ar[l]_-{\eta_*} \ar[d]_{\epsilon_*} &
	H_*(\THH(B)) \ar[d]^{\gamma_*} \\
R_+(H_*(\THH(B))) \ar@{->>}[d] &
	R_+(H_*(B)) \ar[r]^-{\eta^t_*} \ar[l]_-{R_+(\eta_*)} \ar@{->>}[d] &
	H^c_*(\THH(B)^{tC_p}) \ar@{->>}[d] \\
F^n R_+(H_*(\THH(B))) &
	F^n R_+(H_*(B)) \ar[l] \ar[r] &
	F^n H^c_*(\THH(B)^{tC_p})
}
\end{equation}
of complete $\A_*$-comodules.  Here
$$
F^n R_+(H_*(B)) = \im
	\bigl( H^c_*(R_+(B)) \to H_*((B^{\wedge p})^{tC_p}[n]) \bigr)
$$
so that $R_+(H_*(B)) = \lim_n F^n R_+(H_*(B))$, and similarly
with $\THH(B)$ in place of~$B$.  We also use the notation
$$
F^n H^c_*(\THH(B)^{tC_p}) = \im
	\bigl( H^c_*(\THH(B)^{tC_p}) \to H_*(\THH(B)^{tC_p}[n] ) \bigr)
$$
so that $H^c_*(\THH(B)^{tC_p}) = \lim_n F^n H^c_*(\THH(B)^{tC_p})$.

Suppose now that $B$ is an $E_2$ symmetric ring spectrum,
so that $\THH(B)$ is a ring spectrum \cite{BFV07} and the upper two rows
of~\eqref{eq:epsvsgamma} form a diagram of ring spectra.  Then the
upper two rows of~\eqref{eq:epsvsgammahomo} form a diagram of complete
$\A_*$-comodule algebras.
Using the algebra structures, we get a commutative diagram
given by the solid arrows below:
\begin{equation} \label{eq:pyramid}
\xymatrix{
& H_*(\THH(B)) \ar@(l,u)[ddl]_{\epsilon_*} \ar[d] \ar@(r,u)[ddr]^{\gamma_*} \\
& R_+(H_*(B)) \otimes_{H_*(B)} H_*(\THH(B)) \ar[dl]^f \ar[dr]_g \\
R_+(H_*(\THH(B)) \ar@{-->}[rr]_{\Phi_B} & & H^c_*(\THH(B)^{tC_p})
}
\end{equation}
Here $f(\alpha \otimes \beta) = R_+(\eta_*)(\alpha) \cdot
\epsilon_*(\beta)$ while $g(\alpha \otimes \beta) = \eta^t_*(\alpha)
\cdot \gamma_*(\beta)$.  We wish to construct a suitably structured
isomorphism $\Phi_B \: R_+(H_*(\THH(B))) \to H^c_*(\THH(B)^{tC_p})$
making the whole diagram commute.

\subsection{The case $B = MU$}
In this case the central term in diagram~\eqref{eq:pyramid} is
$$
R_+(H_*(MU)) \otimes_{H_*(MU)} H_*(\THH(MU))
\cong R_+(H_*(MU)) \otimes E(\sigma m_\ell \mid \ell\ge1) \,,
$$
since $H_*(\THH(MU)) \cong H_*(MU) \otimes E(\sigma m_\ell \mid
\ell\ge1)$.

By Lemma~\ref{lem:thhmu} each $\sigma m_\ell$ is $\A_*$-comodule
primitive, so by \cite{LR:A}*{3.2.1} we have $\epsilon_*(\sigma m_\ell)
= 1 \otimes \sigma m_\ell$ in $R_+(H_*(\THH(MU))$
on the left hand side.  It has Tate representative
$$
t^{m(2\ell+1)} \otimes \sigma m_\ell{}^{\otimes p}
$$
(up to a unit in $\F_p$) in Tate filtration $-(p-1)(2\ell+1)$,
by~\cite{LR:A}*{5.14}, where $m = (p-1)/2$.
On the right hand side, we showed in Theorem~\ref{thm:gammamu} that
$\gamma_*(\sigma m_\ell)$ has Tate
representative
$$
t^{(p-1)\ell} \otimes m_\ell^{p-1} \sigma m_\ell
$$
(up to a sign)
in Tate filtration $-2(p-1)\ell$.

Since both $\epsilon_*(\sigma m_\ell)$ and $\gamma_*(\sigma m_\ell)$ are
in negative Tate filtration, we find that $f$ maps $\alpha \otimes \beta$,
with $\alpha$ in Tate filtration $<n$ and $\beta \in E(\sigma m_\ell \mid
\ell\ge1)$, to a class in Tate filtration $<n$, and likewise for $g$.
Hence we get a commutative diagram
$$
\xymatrix{
R_+(H_*(\THH(MU))) \ar@{->>}[d] &
	R_+(H_*(MU)) \otimes E(\sigma m_\ell \mid \ell\ge1)
	\ar[l]_-f \ar[r]^-g \ar@{->>}[d] &
	H^c_*(\THH(MU)^{tC_p}) \ar@{->>}[d] \\
F^n R_+(H_*(\THH(MU))) &
	F^n R_+(H_*(MU)) \otimes E(\sigma m_\ell \mid \ell\ge1)
	\ar[l]_-{f_n} \ar[r]^-{g_n} &
	F^n H^c_*(\THH(MU)^{tC_p}) \,.
}
$$
Each $f_n$ and $g_n$ is an $\A_*$-comodule homomorphism.
Theorem~\ref{thm:homiso} for $MU$ now follows from the proposition below
by letting
$$
\Phi_{MU} = \widehat{g} \circ \widehat{f}^{-1}
	= (\lim_n g_n) \circ (\lim_n f_n)^{-1} \,.
$$
Recall the completed tensor product $\ctensor$ from
\cite{LR:A}*{\textsection 2.5}.

\begin{prop} \label{prop:fgmu}
The homomorphisms
$$
f_n \: F^n R_+(H_*(MU)) \otimes E(\sigma m_\ell \mid \ell\ge1)
\longto F^n R_+(H_*(\THH(MU)))
$$
and
$$
g_n \: F^n R_+(H_*(MU)) \otimes E(\sigma m_\ell \mid \ell\ge1)
\longto F^n H^c_*(\THH(MU)^{tC_p})
$$
define strict maps $\{f_n\}_n$ and $\{g_n\}_n$ of inverse systems
of bounded below $\A_*$-comodules of finite type.  These are
pro-isomorphisms, in each homological degree.  Hence each of the limiting
homomorphisms
$$
\widehat{f} = \lim_n f_n \:
R_+(H_*(MU)) \ctensor E(\sigma m_\ell \mid \ell\ge1)
\longto R_+(H_*(\THH(MU)))
$$
and
$$
\widehat{g} = \lim_n g_n \:
R_+(H_*(MU)) \ctensor E(\sigma m_\ell \mid \ell\ge1)
\longto H^c_*(\THH(MU)^{tC_p})
$$
is a continuous isomorphism of complete $\A_*$-comodules,
with continuous inverse.
\end{prop}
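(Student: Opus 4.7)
My plan is to prove that $f_n$ and $g_n$ are pro-isomorphisms by identifying their behavior on associated graded pieces with respect to the Tate filtration, then promoting this to the filtered level using connectivity and finite type. The intermediate object $F^n R_+(H_*(MU)) \otimes E(\sigma m_\ell \mid \ell\ge 1)$ serves as a common source, allowing both sides of the pyramid \eqref{eq:pyramid} to be analyzed uniformly.

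First I would check the structural hypotheses. Strictness of $\{f_n\}_n$ and $\{g_n\}_n$ as maps of inverse systems is automatic from naturality of the Tate tower with respect to the spectrum-level maps of diagram~\eqref{eq:epsvsgamma}. Each term is a bounded-below $\A_*$-comodule of finite type: this follows from connectivity of $MU$ together with finite type of $H_*(MU)$, which by Lemma~\ref{lem:bss} yields finite type for $H_*(\THH(MU))$, and hence by Proposition~\ref{prop:homtatess} for each level $H_*(\THH(MU)^{tC_p}[n])$ of the Tate tower, and similarly on the $(MU^{\wedge p})^{tC_p}[n]$ side.

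The core computation identifies the induced maps on associated graded. Since $\sigma m_\ell$ is $\A_*$-comodule primitive by Lemma~\ref{lem:thhmu}, the formula from \cite{LR:A}*{3.2.1} gives $\epsilon_*(\sigma m_\ell) = 1 \otimes \sigma m_\ell$, which by \cite{LR:A}*{5.14} is represented by a unit multiple of $t^{m(2\ell+1)} \otimes \sigma m_\ell^{\otimes p}$ (with $m = (p{-}1)/2$) in the Tate spectral sequence for $R_+(H_*(\THH(MU)))$. Theorem~\ref{thm:gammamu} gives the parallel leading Tate representative for $\gamma_*(\sigma m_\ell)$, a unit multiple of $t^{(p-1)\ell} \otimes m_\ell^{p-1}\sigma m_\ell$. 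Combining these with the facts that $R_+(\eta_*)$ is the evident inclusion at the $\hatE^\infty$-level and that $\eta^t_*$ sends $u^it^r \otimes m_\ell^{\otimes p} \mapsto u^it^r \otimes m_\ell^p$, the associated graded maps of $f_n$ and $g_n$ become $\F_p$-linear bijections onto the target $\hatE^\infty$-terms computed in Proposition~\ref{prop:thhmutatess} and in the opening remark of this section, realizing the two sides of the abstract filtration-shifting isomorphism described there.

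The main obstacle is the filtration shift inherent in sending $\sigma m_\ell$ to an element of strictly negative Tate filtration: one must verify carefully that the associated graded maps are well-defined on the $F^n$-quotients and compatible as $n$ varies, which requires bookkeeping with the lower-order terms in the Tate representatives of $\epsilon_*(\sigma m_\ell)$ and $\gamma_*(\sigma m_\ell)$. Once this is done, connectivity and finite type ensure that in each fixed homological degree the Tate filtration has only finitely many nonzero associated graded pieces, so induction on filtration length upgrades the graded isomorphism to an isomorphism of $F^n$ in that degree, yielding the pro-isomorphism property. The Mittag--Leffler condition is automatic for pro-isomorphisms of inverse systems of finite-type $\F_p$-vector spaces, so taking limits gives continuous $\A_*$-comodule isomorphisms $\widehat{f}$ and $\widehat{g}$; continuity of their inverses follows since the filtration pieces are mapped isomorphically at each finite stage.
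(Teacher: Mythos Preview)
Your outline correctly identifies the inputs---the Tate representatives of $\epsilon_*(\sigma m_\ell)$ and $\gamma_*(\sigma m_\ell)$, the resulting filtration shift, and the finite-type hypotheses---but the step ``induction on filtration length upgrades the graded isomorphism to an isomorphism of $F^n$'' does not go through. The individual $f_{n,d}$ are \emph{not} isomorphisms: already for $n=0$ the class $1 \otimes \sigma m_\ell$ is nonzero in the source, yet $f_0$ sends it to $\epsilon_*(\sigma m_\ell)$, which lies in Tate filtration $-(p-1)(2\ell+1)<0$ and hence vanishes in $F^0 R_+(H_*(\THH(MU)))$. Since $f$ strictly lowers filtration on every exterior generator, its honest associated graded (with $\sigma m_\ell$ placed in filtration~$0$) annihilates each $\sigma m_\ell$; the filtration-shifting bijection you describe between the two $\hatE^\infty$-terms is not the associated graded of a filtration-preserving map, so no standard five-lemma or filtration induction applies. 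Your finiteness observation says only that each $F^n$ is finite-dimensional in a given degree, not that the map of towers is a pro-isomorphism.

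What is missing is a quantitative estimate linking the filtration shift to an index shift in the tower. The paper makes this explicit: writing $\epsilon_L = \prod_i \epsilon_*(\sigma m_{\ell_i})$ with bidegree $(s_L, t_L) = (-(p-1)(2|L|+r),\, p(2|L|+r))$, one notes that in total degree~$d$ the connectivity constraint $(N-s_L)+(s_L+t_L)\le d$ forces $N - s_L \le n$ once $N = p(n-d)+d$. This is exactly what is needed to construct an explicit pro-inverse $\phi_{n,d}\colon F^N R_+(H_*(\THH(MU)))_d \to [F^n R_+(H_*(MU)) \otimes E(\sigma m_\ell)]_d$, by decomposing the target as $\bigoplus_L F^{N-s_L}R_+(H_*(MU))\cdot\epsilon_L$ and sending each summand back via $\epsilon_L \mapsto \sigma m_{\ell_1}\cdots\sigma m_{\ell_r}$. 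The analogous estimate with $\gamma_L$ in place of $\epsilon_L$ handles $g_n$. Without this bound and the resulting pro-inverse, the passage from the $\hatE^\infty$ bijection to a pro-isomorphism is unjustified.
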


\begin{proof}
We begin with the proof for $\widehat{f}$.
The part of the Tate spectral sequence $\hatE^\infty$-term
\begin{align*}
\hatE^\infty_{*,*} &= E(u) \otimes P(t, t^{-1})
	\otimes P(m_\ell{}^{\otimes p} \mid \ell\ge1) \\
	&\Longrightarrow R_+(H_*(MU))
\end{align*}
in bidegrees $(s, t)$ with Tate filtration $s\ge n$ is the associated
graded of the preferred filtration of $F^n R_+(H_*(MU))$.  Hence the
part of
$$
E(u) \otimes P(t, t^{-1}) \otimes P(m_\ell{}^{\otimes p} \mid \ell\ge1)
	\otimes E(\sigma m_\ell \mid \ell\ge1)
$$
in Tate filtration $s \ge n$ is the associated graded of the
filtration of $F^n R_+(H_*(MU)) \otimes E(\sigma m_\ell \mid \ell\ge1)$.
Similarly the part of the Tate spectral sequence $\hatE^\infty$-term
\begin{align*}
\hatE^\infty_{*,*} &= E(u) \otimes P(t, t^{-1})
	\otimes P(m_\ell{}^{\otimes p} \mid \ell\ge1)
	\otimes E(\sigma m_\ell{}^{\otimes p} \mid \ell\ge1) \\
	&\Longrightarrow R_+(H_*(\THH(MU)))
\end{align*}
in Tate filtration $s \ge n$ is the associated graded of the
filtration of $F^n R_+(H_*(\THH(MU)))$.

The $\A_*$-comodule homomorphism $f_n$ is the identity on $F^n
R_+(H_*(MU))$, and takes $\sigma m_\ell$ to $\epsilon_*(\sigma
m_\ell)$ represented by a unit times $t^{m(2\ell+1)} \otimes \sigma
m_\ell{}^{\otimes p}$ modulo filtration.  Notice that
$t^{m(2\ell+1)} \otimes \sigma m_\ell{}^{\otimes p}$ lies in
bidegree $(-(p-1)(2\ell+1), p(2\ell+1))$, on the line
of slope $-p/(p-1)$ through the origin in the $(s,t)$-plane.

Now restrict attention to one total degree~$d$, indicated by subscripts.
We thus have compatible $\F_p$-linear homomorphisms
$$
f_{n,d} \: [F^n R_+(H_*(MU)) \otimes E(\sigma m_\ell \mid \ell\ge1)]_d
	\longto F^n R_+(H_*(\THH(MU)))_d
$$
for all integers~$n$.  To provide a pro-inverse,
we shall define compatible $\F_p$-linear homomorphisms
$$
\phi_{n,d} \: F^N R_+(H_*(\THH(MU)))_d \longto
	[F^n R_+(H_*(MU)) \otimes E(\sigma m_\ell \mid \ell\ge1)]_d
$$
with $N = N(n, d) = p(n-d)+d$, for all integers~$n$.
Write the source of the inclusion
$$
[R_+(H_*(MU)) \otimes E(\epsilon_*(\sigma m_\ell) \mid \ell\ge1)]_d
\longto R_+(H_*(\THH(MU)))_d
$$
as a direct sum
$$
\bigoplus_L \, [R_+(H_*(MU)) \otimes \F_p\{\epsilon_L\}]_d
$$
with $L$ ranging over the strictly increasing sequences $L = (\ell_1 <
\dots < \ell_r)$ of natural numbers, of length $r\ge0$.  Here
$$
\epsilon_L = \epsilon_*(\sigma m_{\ell_1})
	\cdot \ldots \cdot \epsilon_*(\sigma m_{\ell_r})
$$
has bidegree $(s_L, t_L) = (-(p-1)(2|L|+r), p(2|L|+r))$,
where $|L| = \ell_1 + \dots + \ell_r$.  The inclusion
descends to an isomorphism
$$
\bigoplus_L \, [F^{N-s_L} R_+(H_*(MU)) \otimes \F_p\{\epsilon_L\}]_d
\overset{\cong}\longto F^N R_+(H_*(\THH(MU)))_d \,,
$$
as can be seen from the $\hatE^\infty$-terms.
Since $F^{N-s_L} R_+(H_*(MU))$ is concentrated in total degrees $\ge
N-s_L$, only the summands with $(N-s_L) + (s_L+t_L)
\le d$ are nonzero, and this is equivalent to $2|L|+r \le d-n$.
This inequality, in turn, implies that $N - s_L \le n$.  Hence
we have homomorphisms
\begin{align*}
[F^{N-s_L} R_+(H_*(MU)) \otimes \F_p\{\epsilon_L\}]_d
&\into
[F^{N-s_L} R_+(H_*(MU)) \otimes E(\sigma m_\ell \mid \ell\ge1)]_d \\
&\onto
[F^n R_+(H_*(MU)) \otimes E(\sigma m_\ell \mid \ell\ge1)]_d
\end{align*}
taking $\epsilon_L$ to $\sigma m_{\ell_1} \cdot \ldots \cdot \sigma
m_{\ell_r}$.  Taking the direct sum over $L$, and factoring through
the isomorphism displayed above, we get the desired homomorphism
$\phi_{n,d}$.

For varying $n$, the collection $\{\phi_{n,d}\}_n$ defines a pro-map,
such that $f_{n,d} \circ \phi_{n,d}$ is equal to the structural surjection
$$
F^N R_+(H_*(\THH(MU)))_d \onto F^n R_+(H_*(\THH(MU)))_d \,,
$$
and $\phi_{n,d} \circ f_{N,d}$ is equal to the structural surjection
$$
[F^N R_+(H_*(MU)) \otimes E(\sigma m_\ell \mid \ell\ge1)]_d \onto
[F^n R_+(H_*(MU)) \otimes E(\sigma m_\ell \mid \ell\ge1)]_d \,.
$$
Hence $\{f_{n,d}\}_n$ is a pro-isomorphism,
with pro-inverse $\{\phi_{n,d}\}_n$, in each total degree~$d$.

\medskip

The proof for $\widehat{g}$ relies on similar
filtration shift estimates.  The associated graded of the filtration of
$F^n R_+(H_*(MU)) \otimes E(\sigma m_\ell \mid \ell\ge1)$ was discussed
in the first part of the proof.  The part of the Tate spectral sequence
$\hatE^\infty$-term
\begin{align*}
\hatE^\infty_{*,*} &= E(u) \otimes P(t, t^{-1}) \otimes
	P(m_\ell^p \mid \ell\ge1)
	\otimes E(m_\ell^{p-1} \sigma m_\ell \mid \ell\ge1) \\
	&\Longrightarrow H^c_*(\THH(MU)^{tC_p})
\end{align*}
in Tate filtration $s \ge n$ is the associated graded of the
filtration of $F^n H^c_*(\THH(MU)^{tC_p})$.

The $\A_*$-comodule homomorphism $g_n$ identifies $F^n R_+(H_*(MU))$
with the Tate filtration $s \ge n$ part of $E(u) \otimes P(t, t^{-1})
\otimes P(m_\ell^p \mid \ell\ge1)$, taking $m_\ell{}^{\otimes p}$ to
$m_\ell^p$.  Furthermore, it takes $\sigma m_\ell$ to $\gamma_*(\sigma
m_\ell)$, represented by a sign times $t^{(p-1)\ell} \otimes m_\ell^{p-1}
\sigma m_\ell$ modulo filtration.  Here $t^{(p-1)\ell} \otimes m_\ell^{p-1}
\sigma m_\ell$ lies in bidegree $(-2(p-1)\ell, 2p\ell+1)$, on the
line of slope $-p/(p-1)$ through the point $(s,t) = (0,1)$.

In total degree~$d$ we have the strict pro-map $\{g_{n,d}\}_n$,
with components
$$
g_{n,d} \: [F^n R_+(H_*(MU)) \otimes E(\sigma m_\ell \mid \ell\ge1)]_d
\longto F^n H^c_*(\THH(MU)^{tC_p})_d \,.
$$
We define an $\F_p$-linear pro-inverse $\{\psi_{n,d}\}_n$, with
components
$$
\psi_{n,d} \: F^N H^c_*(\THH(MU)^{tC_p})_d \longto
	[F^n R_+(H_*(MU)) \otimes E(\sigma m_\ell \mid \ell\ge1)]_d \,.
$$
Here $N = N(n, d) = p(n-d)+d$, as in the $\widehat{f}$-case.

Write the source of the inclusion
$$
[R_+(H_*(MU)) \otimes E(\gamma_*(\sigma m_\ell) \mid \ell\ge1)]_d
\longto
H^c_*(\THH(MU)^{tC_p})_d
$$
as a direct sum
$$
\bigoplus_L \, [R_+(H_*(MU)) \otimes \F_p\{\gamma_L\}]_d
$$
with $L = (\ell_1 < \dots < \ell_r)$ as above and
$$
\gamma_L = \gamma_*(\sigma m_{\ell_1}) \cdot \ldots \cdot
	\gamma_*(\sigma m_{\ell_r})
$$
in bidegree $(s'_L, t'_L) = (-2(p-1)|L|, 2p|L|+r)$.
The inclusion descends to an isomorphism
$$
\bigoplus_L \, [F^{N-s'_L} R_+(H_*(MU)) \otimes \F_p\{\gamma_L\}]_d
\overset{\cong}\longto F^N H^c_*(\THH(MU)^{tC_p})_d \,.
$$
Only the summands with $(N - s'_L) + (s'_L + t'_L) \le d$ are nonzero,
and this implies $2|L| \le d-n$ since $r\ge0$.  This,
in turn, implies $N - s'_L \le n$.  Hence we have homomorphisms
\begin{align*}
[F^{N-s'_L} R_+(H_*(MU)) \otimes \F_p\{\gamma_L\}]_d
  &\into [F^{N-s'_L} R_+(H_*(MU)) \otimes E(\sigma m_\ell \mid \ell\ge1)]_d \\
&\onto [F^n  R_+(H_*(MU)) \otimes E(\sigma m_\ell \mid \ell\ge1)]_d
\end{align*}
taking $\gamma_L$ to $\sigma m_{\ell_1} \cdot \ldots \cdot
\sigma m_{\ell_r}$.  Summing over~$L$, and using the isomorphism
above, we get the required homomorphism $\psi_{n,d}$.

The collection $\{\psi_{n,d}\}_n$ defines a pro-map, such that $g_{n,d} \circ
\psi_{n,d}$ is equal to the structural surjection
$$
F^N H^c_*(\THH(MU)^{tC_p})_d \onto F^n H^c_*(\THH(MU)^{tC_p})_d \,,
$$
and $\psi_{n,d} \circ g_{N,d}$ is equal to the
structural surjection
$$
[F^N R_+(H_*(MU)) \otimes E(\sigma m_\ell \mid \ell\ge1)]_d
	\onto [F^n R_+(H_*(MU)) \otimes E(\sigma m_\ell \mid \ell\ge1)]_d \,.
$$
Hence $\{g_{n,d}\}_n$ is a pro-isomorphism.
\end{proof}

\subsection{The case $B = BP$}
In this case the central term in diagram~\eqref{eq:pyramid} is
$$
R_+(H_*(BP)) \otimes_{H_*(BP)} H_*(\THH(BP))
\cong R_+(H_*(BP)) \otimes E(\sigma\bar\xi_k \mid k\ge1) \,.
$$
By Lemma~\ref{lem:thhbp} each $\sigma\bar\xi_k$ is $\A_*$-comodule
primitive, so by \cite{LR:A}*{3.2.1} we have $\epsilon_*(\sigma\bar\xi_k)
= 1 \otimes \sigma\bar\xi_k$ in $R_+(H_*(\THH(BP))$.
It has Tate representative
$$
t^{m(2p^k-1)} \otimes \sigma\bar\xi_k{}^{\otimes p}
$$
in Tate filtration $-(p-1)(2p^k-1)$,
by~\cite{LR:A}*{5.14}.  We showed in
Theorem~\ref{thm:gammabp} that $\gamma_*(\sigma\bar\xi_k)$ has Tate
representative
$$
t^{(p-1)(p^k-1)} \otimes \bar\xi_k^{p-1} \sigma\bar\xi_k
$$
in Tate filtration $-2(p-1)(p^k-1)$.  Since both images are in negative
Tate filtration, we get a commutative diagram
$$
\xymatrix{
R_+(H_*(\THH(BP))) \ar@{->>}[d] &
	R_+(H_*(BP)) \otimes E(\sigma\bar\xi_k \mid k\ge1)
	\ar[l]_-f \ar[r]^-g \ar@{->>}[d] &
	H^c_*(\THH(BP)^{tC_p}) \ar@{->>}[d] \\
F^n R_+(H_*(\THH(BP))) &
	F^n R_+(H_*(BP)) \otimes E(\sigma\bar\xi_k \mid k\ge1)
	\ar[l]_-{f_n} \ar[r]^-{g_n} &
	F^n H^c_*(\THH(BP)^{tC_p}) \,.
}
$$
Theorem~\ref{thm:homiso} for $BP$ now follows from the proposition below
by letting
$$
\Phi_{BP} = \widehat{g} \circ \widehat{f}^{-1}
	= (\lim_n g_n) \circ (\lim_n f_n)^{-1} \,.
$$

\begin{prop} \label{prop:fgbp}
The homomorphisms
$$
f_n \: F^n R_+(H_*(BP)) \otimes E(\sigma \bar\xi_k \mid k\ge1)
\longto F^n R_+(H_*(\THH(BP)))
$$
and
$$
g_n \: F^n R_+(H_*(BP)) \otimes E(\sigma \bar\xi_k \mid k\ge1)
\longto F^n H^c_*(\THH(BP)^{tC_p})
$$
define strict maps $\{f_n\}_n$ and $\{g_n\}_n$ of inverse systems
of bounded below $\A_*$-comodules of finite type.  These are
pro-isomorphisms, in each homological degree.  Hence each of the limiting
homomorphisms
$$
\widehat{f} = \lim_n f_n \:
R_+(H_*(BP)) \ctensor E(\sigma \bar\xi_k \mid k\ge1)
\longto R_+(H_*(\THH(BP)))
$$
and
$$
\widehat{g} = \lim_n g_n \:
R_+(H_*(BP)) \ctensor E(\sigma \bar\xi_k \mid k\ge1)
\longto H^c_*(\THH(BP)^{tC_p})
$$
is a continuous isomorphism of complete $\A_*$-comodules,
with continuous inverse.
\end{prop}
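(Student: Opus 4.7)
The plan is to mirror the proof of Proposition~\ref{prop:fgmu} step for step, with $\bar\xi_k$ playing the role of $m_\ell$ and $p^k-1$ playing the role of $\ell$. First I would record the relevant $\hatE^\infty$-terms. For $R_+(H_*(BP))$ and $R_+(H_*(\THH(BP)))$ the Tate spectral sequences collapse, by the same argument as in the $MU$ case, to $E(u)\otimes P(t,t^{-1}) \otimes P(\bar\xi_k{}^{\otimes p} \mid k\ge1)$ and $E(u)\otimes P(t,t^{-1}) \otimes P(\bar\xi_k{}^{\otimes p}) \otimes E(\sigma\bar\xi_k{}^{\otimes p})$ respectively, while for $H^c_*(\THH(BP)^{tC_p})$ the form is given by Proposition~\ref{prop:thhbptatess}. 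In each case the part in Tate filtration $s\ge n$ is the associated graded of the corresponding $F^n$-quotient.

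The key computational inputs are that $\epsilon_*(\sigma\bar\xi_k)$ has Tate representative (a unit times) $t^{m(2p^k-1)} \otimes \sigma\bar\xi_k{}^{\otimes p}$ in bidegree $(-(p-1)(2p^k-1),\,p(2p^k-1))$, by \cite{LR:A}*{5.14}, and that $\gamma_*(\sigma\bar\xi_k)$ has Tate representative $t^{(p-1)(p^k-1)} \otimes \bar\xi_k^{p-1}\sigma\bar\xi_k$ in bidegree $(-2(p-1)(p^k-1),\,2p(p^k-1)+1)$, by Theorem~\ref{thm:gammabp}. These lie on the lines of slope $-p/(p-1)$ through the origin and through $(0,1)$, exactly as in the $MU$ case, and both have strictly negative Tate filtration for $k\ge1$, so the $f_n$ and $g_n$ are well defined for every $n$.

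Now fix a total degree $d$ and set $N = N(n,d) = p(n-d)+d$. Indexing by strictly increasing sequences $K = (k_1 < \dots < k_r)$ of positive integers, I would write $\epsilon_K = \epsilon_*(\sigma\bar\xi_{k_1}) \cdots \epsilon_*(\sigma\bar\xi_{k_r})$ and $\gamma_K = \gamma_*(\sigma\bar\xi_{k_1}) \cdots \gamma_*(\sigma\bar\xi_{k_r})$, of bidegrees $(s_K, t_K)$ and $(s'_K, t'_K)$ respectively. As in the $MU$ case, the natural inclusions from
\[
\bigoplus_K \, [R_+(H_*(BP)) \otimes \F_p\{\epsilon_K\}]_d
\quad\text{and}\quad
\bigoplus_K \, [R_+(H_*(BP)) \otimes \F_p\{\gamma_K\}]_d
\]
descend at the $F^N$-level to isomorphisms onto $F^N R_+(H_*(\THH(BP)))_d$ and $F^N H^c_*(\THH(BP)^{tC_p})_d$, as can be read off from the collapsed $\hatE^\infty$-terms. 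Connectivity of the summands together with the choice of $N$ forces only finitely many $K$ to contribute, and for each such $K$ the Tate filtration $N-s_K$ (or $N-s'_K$) is automatically bounded above by $n$. Sending $\epsilon_K$ and $\gamma_K$ back to $\sigma\bar\xi_{k_1}\cdots\sigma\bar\xi_{k_r}$ then defines pro-inverses $\{\phi_{n,d}\}_n$ and $\{\psi_{n,d}\}_n$ to $\{f_{n,d}\}_n$ and $\{g_{n,d}\}_n$. Passing to the limit yields the desired continuous isomorphisms $\widehat{f}$ and $\widehat{g}$ of complete $\A_*$-comodules with continuous inverses.

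The only genuine obstacle is the arithmetic bookkeeping needed to check that the ``contributes nontrivially in degree $\le d$'' condition really implies the filtration bound $N - s_K \le n$ (and the analogous bound $N - s'_K \le n$). This reduces to the inequality $\sum_{j=1}^r (2p^{k_j}-1) \le d-n$ playing the role of $2|L|+r \le d-n$; since $p^{k_j}-1 \ge k_j \ge 1$, these estimates are if anything more favorable than those in the $MU$ case, so no new ideas are required.
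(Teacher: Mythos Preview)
Your proposal is correct and follows exactly the approach the paper intends: the paper's own proof of this proposition is the single sentence ``This is similar to the $MU$-case,'' and your write-up is precisely the $BP$-analogue of the argument for Proposition~\ref{prop:fgmu}, with the same choice $N = p(n-d)+d$ and the same filtration-shift estimates along the lines of slope $-p/(p-1)$.
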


\begin{proof}
This is similar to the $MU$-case.
\end{proof}

\begin{bibdiv}
\begin{biblist}

\bib{Ad74}{book}{
   author={Adams, J. F.},
   title={Stable homotopy and generalised homology},
   note={Chicago Lectures in Mathematics},
   publisher={University of Chicago Press},
   place={Chicago, Ill.},
   date={1974},
   pages={x+373},
}
		
\bib{AGM85}{article}{
   author={Adams, J. F.},
   author={Gunawardena, J. H.},
   author={Miller, H.},
   title={The Segal conjecture for elementary abelian $p$-groups},
   journal={Topology},
   volume={24},
   date={1985},
   number={4},
   pages={435--460},
}

\bib{AR05}{article}{
   author={Angeltveit, V.},
   author={Rognes, J.},
   title={Hopf algebra structure on topological Hochschild homology},
   journal={Algebr. Geom. Topol.},
   volume={5},
   date={2005},
   pages={1223--1290},
}

\bib{AR02}{article}{
   author={Ausoni, Ch.},
   author={Rognes, J.},
   title={Algebraic $K$-theory of topological $K$-theory},
   journal={Acta Math.},
   volume={188},
   date={2002},
   number={1},
   pages={1--39},
}

\bib{AR08}{article}{
   author={Ausoni, Ch.},
   author={Rognes, J.},
   title={The chromatic red-shift in algebraic $K$-theory},
   book={
      title={Guido's Book of Conjectures},
      series={Monographie de L'Enseignement Math{\'e}matique},
      volume={40},
   },
   date={2008},
   pages={13--15},
}

\bib{BJ02}{article}{
   author={Baker, A.},
   author={Jeanneret, A.},
   title={Brave new Hopf algebroids and extensions of $M$U-algebras},
   journal={Homology Homotopy Appl.},
   volume={4},
   date={2002},
   number={1},
   pages={163--173},
}

\bib{BM1}{article}{
   author={Basterra, M.},
   author={Mandell, M. A.},
   title={Homology of $E_n$ ring spectra and iterated $\THH$},
   journal={Algebra. Geom. Topol.},
   volume={11},
   date={2011},
   pages={939--981},
}

\bib{BM2}{article}{
   author={Basterra, M.},
   author={Mandell, M. A.},
   title={The multiplication in $BP$},
   note={arXiv:1101.0023 preprint},
   date={2011},
}

\bib{B1}{article}{
   author={B{\"o}kstedt, M.},
   title={Topological Hochschild homology},
   note={Bielefeld preprint},
   date={ca.~1986},
}

\bib{BHM93}{article}{
   author={B{\"o}kstedt, M.},
   author={Hsiang, W. C.},
   author={Madsen, I.},
   title={The cyclotomic trace and algebraic $K$-theory of spaces},
   journal={Invent. Math.},
   volume={111},
   date={1993},
   number={3},
   pages={465--539},
}

\bib{BM94}{article}{
   author={B{\"o}kstedt, M.},
   author={Madsen, I.},
   title={Topological cyclic homology of the integers},
   note={$K$-theory (Strasbourg, 1992)},
   journal={Ast\'erisque},
   number={226},
   date={1994},
   pages={7--8, 57--143},
}

\bib{BBLR}{article}{
   author={B{\"o}kstedt, M.},
   author={Bruner, R. R.},
   author={Lun{\o}e--Nielsen, S.},
   author={Rognes, J.},
   title={On cyclic fixed points of spectra},
   note={arXiv:0712.3476 preprint},
   date={2007},
}

\bib{BFV07}{article}{
   author={Brun, M.},
   author={Fiedorowicz, Z.},
   author={Vogt, R. M.},
   title={On the multiplicative structure of topological Hochschild
   homology},
   journal={Algebr. Geom. Topol.},
   volume={7},
   date={2007},
   pages={1633--1650},
}

\bib{BR05}{article}{
   author={Bruner, R. R.},
   author={Rognes, J.},
   title={Differentials in the homological homotopy fixed point spectral
   sequence},
   journal={Algebr. Geom. Topol.},
   volume={5},
   date={2005},
   pages={653--690},
}

\bib{Ca84}{article}{
   author={Carlsson, G.},
   title={Equivariant stable homotopy and Segal's Burnside ring conjecture},
   journal={Ann. of Math. (2)},
   volume={120},
   date={1984},
   number={2},
   pages={189--224},
}

\bib{CMP87}{article}{
   author={Caruso, J.},
   author={May, J. P.},
   author={Priddy, S. B.},
   title={The Segal conjecture for elementary abelian $p$-groups. II.
   $p$-adic completion in equivariant cohomology},
   journal={Topology},
   volume={26},
   date={1987},
   number={4},
   pages={413--433},
}

\bib{G87}{article}{
   author={Greenlees, J. P. C.},
   title={Representing Tate cohomology of $G$-spaces},
   journal={Proc. Edinburgh Math. Soc. (2)},
   volume={30},
   date={1987},
   number={3},
   pages={435--443},
}

\bib{GM95}{article}{
   author={Greenlees, J. P. C.},
   author={May, J. P.},
   title={Generalized Tate cohomology},
   journal={Mem. Amer. Math. Soc.},
   volume={113},
   date={1995},
   number={543},
   pages={viii+178},
}

\bib{HM97}{article}{
   author={Hesselholt, L.},
   author={Madsen, I.},
   title={On the $K$-theory of finite algebras over Witt vectors of perfect
   fields},
   journal={Topology},
   volume={36},
   date={1997},
   number={1},
   pages={29--101},
}

\bib{HSS00}{article}{
   author={Hovey, M.},
   author={Shipley, B.},
   author={Smith, J.},
   title={Symmetric spectra},
   journal={J. Amer. Math. Soc.},
   volume={13},
   date={2000},
   number={1},
   pages={149--208},
}

\bib{LMS86}{book}{
   author={Lewis, L. G., Jr.},
   author={May, J. P.},
   author={Steinberger, M.},
   title={Equivariant stable homotopy theory},
   series={Lecture Notes in Mathematics},
   volume={1213},
   note={With contributions by J. E. McClure},
   publisher={Springer-Verlag},
   place={Berlin},
   date={1986},
   pages={x+538},
}

\bib{LDMA80}{article}{
   author={Lin, W. H.},
   author={Davis, D. M.},
   author={Mahowald, M. E.},
   author={Adams, J. F.},
   title={Calculation of Lin's Ext groups},
   journal={Math. Proc. Cambridge Philos. Soc.},
   volume={87},
   date={1980},
   number={3},
   pages={459--469},
}

\bib{LR:A}{article}{
   author={Lun{\o}e--Nielsen, S.},
   author={Rognes, J.},
   title={The topological Singer construction},
   note={arXiv:1010.5633 preprint},
   date={2010},
}

\bib{Ma77}{book}{
   author={May, J. P.},
   title={$E_{\infty }$ ring spaces and $E_{\infty }$ ring spectra},
   series={Lecture Notes in Mathematics, Vol. 577},
   note={With contributions by Frank Quinn, Nigel Ray, and J\o rgen
   Tornehave},
   publisher={Springer-Verlag},
   place={Berlin},
   date={1977},
   pages={268},
}

\bib{MS93}{article}{
   author={McClure, J. E.},
   author={Staffeldt, R. E.},
   title={On the topological Hochschild homology of $b{\rm u}$. I},
   journal={Amer. J. Math.},
   volume={115},
   date={1993},
   number={1},
   pages={1--45},
}

\bib{Mi58}{article}{
   author={Milnor, J.},
   title={The Steenrod algebra and its dual},
   journal={Ann. of Math. (2)},
   volume={67},
   date={1958},
   pages={150--171},
}

\bib{Ra84}{article}{
   author={Ravenel, D. C.},
   title={The Segal conjecture for cyclic groups and its consequences},
   note={With an appendix by Haynes R. Miller},
   journal={Amer. J. Math.},
   volume={106},
   date={1984},
   number={2},
   pages={415--446},
}

\bib{Ra86}{book}{
   author={Ravenel, D. C.},
   title={Complex cobordism and stable homotopy groups of spheres},
   series={Pure and Applied Mathematics},
   volume={121},
   publisher={Academic Press Inc.},
   place={Orlando, FL},
   date={1986},
   pages={xx+413},
}

\bib{R98}{article}{
   author={Rognes, J.},
   title={Trace maps from the algebraic $K$-theory of the integers (after
   M. B\"okstedt)},
   journal={J. Pure Appl. Algebra},
   volume={125},
   date={1998},
   number={1-3},
   pages={277--286},
}

\end{biblist}
\end{bibdiv}

\end{document}